\documentclass[12pt,leqno]{article}
\usepackage[margin=1in]{geometry}
\usepackage{graphicx}
\usepackage{amsmath}
\usepackage{amsthm}
\usepackage{amssymb}
\usepackage{color}
\usepackage{ textcomp }
\usepackage{hyperref}
\usepackage{float}

\usepackage{algorithm}
\usepackage[noend]{algpseudocode}

\newcommand*\LH{\ensuremath{\overset{\kern2pt L'H}{=}}}

\newcommand{\Ba}{B_{\alpha}}
\newcommand{\Bb}{B_{\beta}}
\newcommand{\Bc}{B_{\gamma}}
\newcommand{\Wa}{W_{\alpha}}
\newcommand{\Wb}{W_{\beta}}
\newcommand{\Wc}{W_{\gamma}}

\newcommand{\Bbt}{\widetilde{B_{\beta}}}
\newcommand{\Bct}{\widetilde{B_{\gamma}}}
\newcommand{\Wat}{\widetilde{W_{\alpha}}}
\newcommand{\Wbt}{\widetilde{W_{\beta}}}
\newcommand{\Wct}{\widetilde{W_{\gamma}}}

\newcommand{\Ebt}{E_{\beta}}
\newcommand{\Ect}{E_{\gamma}}

\newcommand{\fp}[1]{\{#1\}}
\newcommand{\Fp}[1]{\left\{#1\right\}}
\newcommand{\fl}[1]{{\lfloor #1 \rfloor}}
\newcommand{\Fl}[1]{{\left\lfloor #1 \right\rfloor}}
\newcommand{\cl}[1]{{\lceil #1 \rceil}}
\newcommand{\Cl}[1]{{\left\lceil #1 \right\rceil}}

\newcommand{\ZZ}{\mathbb{Z}}
\newcommand{\NN}{\mathbb{N}}
\newcommand{\QQ}{\mathbb{Q}}
\newcommand{\RR}{\mathbb{R}}

\newcommand{\at}{\widetilde{a}}
\newcommand{\bt}{\widetilde{b}}
\newcommand{\ct}{\widetilde{c}}

\newcommand{\seq}[1]{(#1)_{n\in\NN}}
\newcommand{\Seq}[1]{\left(#1\right)_{n\in\NN}}

\newtheorem{lem}{Lemma}[section]
\newtheorem{prop}[lem]{Proposition}
\newtheorem{cor}[lem]{Corollary}

\newtheorem{thm}[lem]{Theorem}
\newtheorem{alg}[lem]{Algorithm}

\theoremstyle{definition}
\newtheorem{defn}[lem]{Definition}

\theoremstyle{remark}
\newtheorem{remark}[lem]{Remark}

\numberwithin{equation}{section}

\title{\bf Webster Sequences, Apportionment Problems, and Just-In-Time Sequencing}

\author{
XIAOMIN LI
}
\date{}

\newcommand{\Addresses}{{
  \bigskip
  \footnotesize
  Xiaomin Li, \textsc{Harvard University, Lu Group, Maxwell Dworkin, 33 Oxford St, Cambridge, MA 02138}\par\nopagebreak
  \textit{E-mail address}, Xiaomin Li:
  \href{mailto:xiaominli@g.harvard.edu}{\tt xiaominli@g.harvard.edu}


}}

\begin{document}

\maketitle

\begin{abstract}

    Given a real number $\alpha \in (0,1)$, we define the Webster sequence of density $\alpha$ to be $\Wa = \seq{\lceil(n-1/2)/\alpha\rceil}$, where $\cl{x}$ is the ceiling function. It is known that if $\alpha$ and $\beta$ are irrational with $\alpha + \beta = 1$, then $\Wa$ and $\Wb$ partition $\NN$. On the other hand, an analogous result for three-part partitions does not hold: There does not exist a partition of $\NN$ into sequences $\Wa, \Wb, \Wc$ with $\alpha, \beta, \gamma $ irrational.
    
    In this paper, we consider the question of how close one can come to a three-part partition of $\NN$ into Webster sequences with prescribed densities  $\alpha, \beta, \gamma $. We show that if  $\alpha, \beta, \gamma $ are irrational with  $\alpha + \beta +\gamma = 1$, there exists a partition of $\NN$ into sequences of densities $\alpha, \beta, \gamma$, in which one of the sequences is a Webster sequence and the other two are ``almost" Webster sequences that are obtained from Webster sequences by perturbing some elements by at most 1. We also provide two efficient algorithms to construct such partitions. The first algorithm outputs the $n$th element of each sequence in $O(1)$ time and the second algorithm gives the assignment of the $m$th positive integer to a sequence in $O(1)$ time. We show that the results are best-possible in several respects. Moreover, we describe applications of these results to apportionment and optimal scheduling problems. 

\end{abstract}

\section{Introduction}
\label{sec:introduction}

\subsection{Webster Sequences}
\label{subsec:WebsterSeq}
    
    A classical problem that arises in many contexts is the sequencing problem (see, for example, Kubiak \cite[p.~vii]{Kubiak2008}). Given a finite alphabet and a list of frequencies for these letters, we seek to construct a sequence over the given alphabet in which each letter occurs with its prescribed frequency as evenly as possible. This problem is equivalent to partitioning the set of natural numbers $\NN$ into sequences with prescribed densities as evenly as possible.
    
    For a single sequence with density $\alpha$, the ``ideal" position of the $n$th element is $n/\alpha$. However, $n/\alpha$ is in general not an integer, so the best one can do is to assign the $n$th element of the sequence to the nearest integer below or above $n/\alpha$, i.e., to  $\fl{n/\alpha}$ or $\cl{n/\alpha}$. This leads to the sequences
    \[
    \text{$B_\alpha = \Seq{\Fl{\frac{n}{\alpha}}}$ and $\overline{B}_\alpha =  \Seq{\Cl{\frac{n}{\alpha}}}$.}
    \] 
    In fact, the former sequence is known as the \emph{Beatty sequence of density $\alpha$} \cite{Beatty}. More generally, given a real number $\theta$, the \emph{non-homogeneous Beatty sequence of density $\alpha$ and phase $\theta$} \cite{OBryant2003} is defined as 
    \begin{equation*}
        B_{\alpha,\theta} = \left(\Fl{\frac{n+\theta}{\alpha}}\right)_{n\in\NN}.
    \end{equation*}
    Note that $B_\alpha = B_{\alpha, 0}$  and (assuming $\alpha\notin \QQ$) $\overline{B}_\alpha  = B_{\alpha,\alpha}$, so $B_\alpha$ and $\overline{B}_\alpha $ are both special cases of $  B_{\alpha,\theta}$.

    In this paper, we focus on another special case, namely the sequence (assuming $\alpha\notin \QQ$) 
      \begin{equation}
    \label{def-webster}
    \Wa = B_{\alpha, \alpha-1/2} =  \left(\Fl{ \frac{n+\alpha-1/2}{\alpha}}\right)_{n\in\NN} = \left(\Cl{ \frac{n-1/2}{\alpha}}\right)_{n\in\NN},
    \end{equation}
    which we call the \emph{Webster sequence of density $\alpha$}. The terminology is motivated by the connection with the Webster method of apportionment  (see Section \ref{subsec:ApportionMethods}). The Webster sequence represents, in a sense, the ``fairest" sequence among all sequences $ B_{\alpha,\theta} $ when measured by the quantity
     \begin{equation}
    \label{def:deviation-delta}
        \Delta_{\alpha, \theta}(m) = \#\{m'\le m: m'\in B_{\alpha,\theta}\}   - m \alpha.
    \end{equation}
    Indeed, it is easy to see (cf. Lemma \ref{lem:general-lemmas}(ii) below) that for the Webster sequence (i.e., when $\theta = \alpha - 1/2$), the quantity $\sup_m |\Delta_{\alpha, \theta}(m)|$ is minimal (and equal to $1/2$).
    
    As mentioned, the problem of sequencing letters from a given alphabet of size $k$ is equivalent to the problem of constructing a partition of $\NN$ into $k$ sequences with prescribed densities. In the case $k=2$, such partitions can be obtained using either Beatty or Webster sequences.

    \begin{thm}[Beatty's Theorem: Partition into two Beatty sequences \cite{Beatty}]
    \label{thm:beatty-k=2}
    Given positive irrational numbers $\alpha, \beta$ with $\alpha+\beta = 1$, the Beatty sequences $B_\alpha, B_\beta$ partition $\NN$.
    \end{thm}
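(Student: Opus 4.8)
The plan is to prove Beatty's Theorem by a counting argument together with a direct disjointness check. Concretely, I will show that for every positive integer $N$ the sequences $B_\alpha$ and $B_\beta$ together contribute exactly $N$ terms (counted with multiplicity) to $\{1,\dots,N\}$, and that $B_\alpha$ and $B_\beta$ have no value in common. Granting both facts, the $N$ contributed terms are pairwise distinct and all lie in $\{1,\dots,N\}$, hence they are exactly $1,2,\dots,N$; since $N$ is arbitrary, every positive integer lies in precisely one of $B_\alpha, B_\beta$, which is the asserted partition.

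First I would record the monotonicity needed to turn ``counting indices'' into ``counting distinct values'': since $0<\alpha<1$ we have $1/\alpha>1$, so $\lfloor (n+1)/\alpha\rfloor-\lfloor n/\alpha\rfloor\ge 1$ and $B_\alpha$ is strictly increasing (and the same for $B_\beta$). Next comes the counting step. For fixed $N$, $\lfloor n/\alpha\rfloor\le N\iff n<\alpha(N+1)$, and because $\alpha$ is irrational $\alpha(N+1)\notin\ZZ$, so the number of positive integers $n$ with this property is exactly $\lfloor\alpha(N+1)\rfloor$; likewise $B_\beta$ contributes $\lfloor\beta(N+1)\rfloor$ terms. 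Now invoke $\alpha+\beta=1$: the reals $\alpha(N+1)$ and $\beta(N+1)$ are non-integers summing to the integer $N+1$, so their fractional parts sum to $1$ and $\lfloor\alpha(N+1)\rfloor+\lfloor\beta(N+1)\rfloor=(N+1)-1=N$, as needed.

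For disjointness I would argue directly. Suppose $\lfloor i/\alpha\rfloor=\lfloor j/\beta\rfloor=m$ for some $i,j\ge 1$. Irrationality upgrades the defining inequalities to strict ones: $m<i/\alpha<m+1$ and $m<j/\beta<m+1$, that is, $m\alpha<i<(m+1)\alpha$ and $m\beta<j<(m+1)\beta$. Adding these and using $\alpha+\beta=1$ gives $m<i+j<m+1$, which is impossible for the integer $i+j$. Hence $B_\alpha\cap B_\beta=\varnothing$, and combining with the previous paragraph finishes the proof.

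There is no genuine obstacle here — this is a classical fact — but the one place to be careful is the systematic use of irrationality to replace ``$\le$'' by ``$<$'': it is exactly what pins the count down to the single floor $\lfloor\alpha(N+1)\rfloor$, what makes the floor-sum identity $\lfloor x\rfloor+\lfloor y\rfloor=x+y-1$ valid for $x=\alpha(N+1),\,y=\beta(N+1)$, and what forces the strict inequalities in the disjointness argument; for rational $\alpha$ (e.g.\ $\alpha=\beta=1/2$) each of these steps fails and the conclusion is false.
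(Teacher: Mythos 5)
Your proof is correct. Note that the paper does not prove this statement at all: it is quoted as a classical result with a citation to Beatty, so there is no in-paper argument to compare against. Your argument is the standard counting proof: the identity $\lfloor\alpha(N+1)\rfloor+\lfloor\beta(N+1)\rfloor=N$ (valid because irrationality forces the two fractional parts to sum to exactly $1$), strict monotonicity of each sequence, and the disjointness argument $m<i+j<m+1$ together pin down that the two sequences hit each of $1,\dots,N$ exactly once for every $N$. All the places where irrationality is needed are correctly flagged, and the steps are sound.
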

    \begin{thm}[Partition into two Webster sequences \cite{Fraenkel1969, OBryant2003}]
    \label{thm:webster-k=2}
    Given positive irrational numbers $\alpha, \beta$ with $\alpha+\beta = 1$, the Webster sequences $\Wa, \Wb$ partition $\NN$.
    \end{thm}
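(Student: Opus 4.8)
The plan is to reduce the partition statement to a single counting identity. First I would rewrite membership in a Webster sequence as an interval condition: for $m\in\NN$, the equation $\cl{(n-1/2)/\alpha}=m$ is equivalent to $\alpha(m-1)+1/2 < n \le \alpha m + 1/2$, so $m\in\Wa$ precisely when the half-open interval $(\alpha(m-1)+1/2,\ \alpha m+1/2]$ contains an integer. Since $\alpha\in(0,1)$, this interval has length $\alpha<1$ and hence contains at most one integer; together with $1/\alpha>1$ (which makes $\Wa$ strictly increasing), this shows $\Wa$ is an honest increasing enumeration of the set $\{m:c_\alpha(m)=1\}$, where
\[
c_\alpha(m) := \fl{\alpha m + 1/2} - \fl{\alpha(m-1)+1/2} \in \{0,1\}.
\]
Here $\alpha\notin\QQ$ guarantees $\alpha m + 1/2\notin\ZZ$, so counting integers in the interval by subtracting floors is valid. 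The identical statement holds for $\beta$ with a function $c_\beta(m)\in\{0,1\}$.

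The crux is the identity $\fl{\alpha m + 1/2} + \fl{\beta m + 1/2} = m$ for every integer $m\ge 0$. This holds because $\alpha+\beta=1$ gives $(\alpha m + 1/2)+(\beta m + 1/2) = m+1$, while irrationality forces $0<\{\alpha m + 1/2\}<1$ and $0<\{\beta m + 1/2\}<1$; since these two numbers sum to the integer $m+1$, their fractional parts must sum to exactly $1$, so the floors sum to $(m+1)-1=m$. Subtracting the instance of this identity at $m-1$ from the instance at $m$ yields, for all $m\ge 1$,
\[
c_\alpha(m) + c_\beta(m) = m-(m-1) = 1.
\]

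Finally I would assemble the pieces: for each $m\in\NN$ we have $c_\alpha(m)+c_\beta(m)=1$ with both summands in $\{0,1\}$, which forces exactly one of them to equal $1$; that is, $m$ belongs to exactly one of $\Wa,\Wb$, and is attained there exactly once. Combined with the strict monotonicity observed above, this is precisely the assertion that $\Wa$ and $\Wb$ partition $\NN$. I do not expect a genuine obstacle; the only points requiring care are the irrationality bookkeeping (that $\alpha m + 1/2$ and $\beta m + 1/2$ are never integers, so that both the "at most one integer'' count and the fractional-part argument go through) and the use of $\alpha,\beta<1$ to keep the increments $c_\alpha(m),c_\beta(m)$ in $\{0,1\}$ — this is also what explains why the hypothesis is "$\alpha,\beta$ irrational with $\alpha+\beta=1$'' rather than merely $\alpha+\beta=1$. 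Alternatively, one could deduce the theorem from Fraenkel's characterization of complementary nonhomogeneous Beatty sequences \cite{Fraenkel1969, OBryant2003} applied to the phase $\theta=\alpha-1/2$, but the direct counting argument above is self-contained and sets up notation ($c_\alpha$, and the identity $\fl{\alpha m+1/2}+\fl{\beta m+1/2}=m$) that will be reused later.
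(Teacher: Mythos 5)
Your argument is correct. Note that the paper does not actually prove Theorem \ref{thm:webster-k=2} --- it quotes it from \cite{Fraenkel1969, OBryant2003} --- but your counting argument is essentially the same mechanism the paper itself uses elsewhere: your increment $c_\alpha(m)=\fl{m\alpha+1/2}-\fl{(m-1)\alpha+1/2}$ being $1$ is exactly the membership criterion $\fp{m\alpha+1/2}<\alpha$ of Lemma \ref{lem:general-lemmas}(i), your identity $\fl{m\alpha+1/2}+\fl{m\beta+1/2}=m$ is the two-sequence specialization of the counting-function formula of Lemma \ref{lem:general-lemmas}(ii) combined as in Lemma \ref{lem:three-counting-sum}, and the irrationality bookkeeping you flag matches \eqref{eq:uvequality-impossible}. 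So your proposal is a correct, self-contained proof in the same spirit as the paper's toolkit, with no gaps.
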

    It is known that these theorems do not generalize to three sequences if their densities are irrational \cite{Uspensky1927, Skolem1957, Graham1963}. For rational densities $\alpha, \beta, \gamma$, a partition into non-homogeneous Beatty sequences exists only if $\{\alpha, \beta, \gamma\} = \{1/7, 2/7, 4/7\}$ (see \cite{Fraenkel1973} and \cite{Morikawa1982}). This result is a special case of Fraenkel's Conjecture (see \cite{Fraenkel1973} or \cite[Conjecture 6.22]{Kubiak2008}), which characterizes the $k$-tuples of densities $(\alpha_1, \dots, \alpha_k)$ for which a partition into $k$ non-homogeneous Beatty sequences with the given densities exists. Fraenkel's Conjecture has been proven for $k = 3,4,5,6,7$ (see Altman \cite{Altman2000}, Tijdeman \cite{Tijdeman2000, Tijdeman2000Exact}, Morikawa \cite{Morikawa1982, Morikawa1985}, and Bar{\'a}t \& Varj{\'u} \cite{Barat2003}), but the general case remains open.
    
    In \cite{beatty-paper}, we considered partitions of $\NN$ into three Beatty sequences or small perturbations of Beatty sequences and proved the following theorem.
    \begin{thm}[{\cite[Theorem 4]{beatty-paper}}]
    \label{thm:BeattyPartition}
    Given positive irrational numbers $\alpha,\beta, \gamma$ such that 
    \[
    \max(\alpha, \beta) < \gamma \text{ and } \alpha + \beta + \gamma = 1,
    \]
    there exists a partition of $\NN$ into sequences $\Ba, \Bbt, \Bct$, where $\Ba$ is an exact Beatty sequence,  $\Bbt$ is obtained from the Beatty sequence  $\Bb$ by perturbing some elements by at most 1, and $\Bct$ is obtained from the Beatty sequence  $\Bc$ by perturbing some elements by at most 2.
    \end{thm}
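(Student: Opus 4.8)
The plan is to keep $\Ba$ exactly, describe $\NN\setminus\Ba$ by the two-part theory, and then split that complement by a second, rescaled, two-part partition. \textbf{Step 1: the complement of $\Ba$.} Since $\alpha$ and $1-\alpha=\beta+\gamma$ are irrational and sum to $1$, Beatty's Theorem (Theorem~\ref{thm:beatty-k=2}) shows $\Ba$ and $B_{1-\alpha}$ partition $\NN$; hence $C:=\NN\setminus\Ba$ is the Beatty sequence $B_{\beta+\gamma}$, whose $m$-th term is $c_m=\Fl{m/(\beta+\gamma)}$. Two consequences of the hypothesis $\max(\alpha,\beta)<\gamma$ will be used repeatedly: first $\alpha<\tfrac12$, so $1<1/(\beta+\gamma)<2$ and the consecutive gaps of $C$ are all $1$ or $2$; second $\beta<\tfrac12$, so the gaps of $\Bb$ are all $\ge 2$, and since $\beta<\gamma$ forces $\beta/(\beta+\gamma)<\tfrac12$, the gaps of the rescaled Beatty sequence $B_{\beta/(\beta+\gamma)}$ are also $\ge 2$.

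\textbf{Step 2: splitting the complement.} Put $\beta'=\beta/(\beta+\gamma)$, $\gamma'=\gamma/(\beta+\gamma)$, so $\beta'+\gamma'=1$. Partition the index set $\NN$ of $C$ into subsequences $I_\beta,I_\gamma$ of densities $\beta',\gamma'$: when $\beta'$ is irrational take $I_\beta=B_{\beta'}$, $I_\gamma=B_{\gamma'}$ by Theorem~\ref{thm:beatty-k=2}; when $\beta'$ (equivalently $\gamma'$) is rational — which can occur — these periodic sequences overlap, but an elementary argument on periodic balanced sequences lets us perturb one of them by at most $1$ so that $I_\beta,I_\gamma$ still partition $\NN$ with the right densities and $I_\beta$ stays within $1$ of $B_{\beta'}$. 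Now set $\Bbt=(c_k)_{k\in I_\beta}$ and $\Bct=(c_k)_{k\in I_\gamma}$. These manifestly partition $C$, so $\Ba,\Bbt,\Bct$ partition $\NN$, and their densities are $\alpha$, $(\beta+\gamma)\beta'=\beta$, $(\beta+\gamma)\gamma'=\gamma$. Everything then reduces to bounding $\Bbt(m)-\Bb(m)$ and $\Bct(m)-\Bc(m)$.

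\textbf{Step 3: the error bounds.} Since $C$ and $I_\beta$ are increasing, $\Bbt(m)=c_{I_\beta(m)}$. Writing $c_k=k/(\beta+\gamma)-\{k/(\beta+\gamma)\}$ and $I_\beta(m)=m/\beta'-\{m/\beta'\}$, the leading terms telescope, $I_\beta(m)/(\beta+\gamma)=m/(\beta'(\beta+\gamma))+O(1)=m/\beta+O(1)$, so $\Bbt(m)=\Fl{m/\beta}+O(1)$; chasing the fractional parts and using $1/(\beta+\gamma)<2$ shows the integer $\Bbt(m)-\Bb(m)$ lies in $\{-2,-1,0\}$, with the value $-2$ occurring only when $\{m/\beta'\}$ is close to $1$. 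The identical computation gives $\Bct(m)-\Bc(m)\in\{-2,-1,0\}$, which already meets the required tolerance $2$ for the $\gamma$-part. To sharpen the $\beta$-part to perturbation at most $1$, note that if $\Bbt(m)=\Bb(m)-2$ for the uncorrected sequences and $k=I_\beta(m)$, then $k\in I_\beta$ forces $k+1\notin I_\beta$ (gaps of $I_\beta$ are $\ge 2$), hence $k+1\in I_\gamma$; swapping the labels of $k$ and $k+1$ replaces $\Bbt(m)$ by $c_{k+1}\in\{\Bb(m)-1,\Bb(m)\}$ (gaps of $C$ are $\le 2$) — now within $1$ of $\Bb(m)$ — and alters exactly one term of $\Bct$, decreasing it by $1$ or $2$. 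One then verifies that each such affected $\Bct$-term was, before correction, sufficiently far behind its own Beatty target (a cancellation forced by the fact that $\Ba,\Bbt,\Bct$ jointly partition $\NN$) that after the decrease it still stays within $2$ of $\Bc$. Doing this bookkeeping for all offending $m$ simultaneously, together with the analogous treatment of the rational-$\beta'$ case, yields sequences with the asserted perturbation bounds $1$ and $2$.

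\textbf{The main obstacle} is precisely the last verification in Step 3: showing that whenever $\Bbt$ runs (maximally) ahead of $\Bb$, the neighbouring part of $\Bct$ lags $\Bc$ by a compensating amount, so the correcting swaps never push $\Bct$ more than $2$ from $\Bc$. Making this precise requires a joint analysis of the three remainder sequences $\{n\alpha\}$, $\{n\beta\}$, $\{n\gamma\}$ and the gap structure they induce; this is where the hypothesis $\max(\alpha,\beta)<\gamma$ genuinely enters, and where the asymmetry of the bounds originates — a uniform argument plausibly gives $2$ for both sequences, but obtaining $1$ for $\Bb$ seems to require this extra structure.
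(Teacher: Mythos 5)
A point of orientation first: this paper does not prove Theorem \ref{thm:BeattyPartition} at all --- it is quoted from \cite{beatty-paper} --- so there is no in-paper proof to compare against, and your proposal has to stand on its own. Its skeleton is sound: keeping $\Ba$ exact, identifying $\NN\setminus\Ba$ with $B_{\beta+\gamma}$, splitting that complement by a second Beatty partition of the index set with densities $\beta/(\beta+\gamma)$, $\gamma/(\beta+\gamma)$, and computing that the uncorrected element-wise errors lie in $\{-2,-1,0\}$ for both parts is all correct. But the proof has a genuine gap exactly where you flag it: the assertion that the local swaps repairing a $\beta$-error of $-2$ never push the affected $\Bct$-term more than $2$ below its Beatty target is never verified, and your heuristic for it is stated backwards --- a term that is already ``far behind'' its target cannot absorb a further decrease of $1$ or $2$; what you actually need is that the affected term sits \emph{at} its target before the swap. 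Concretely, with $m$ the offending index, $k=\Fl{m(\beta+\gamma)/\beta}$, $s=\fp{m(\beta+\gamma)/\beta}$, $x=\fp{m/\beta}$, the swap condition is $s>(\beta+\gamma)(1+x)$, the affected $\gamma$-position is $j=k+1-m$, and one must check that $\Fl{k/(\beta+\gamma)}=\Fl{m/\beta}-2$ while $\Fl{j/\gamma}=\Fl{m/\beta}$, so the post-swap error is exactly $-2$; this reduces to $(1-s)/\gamma<1-x$, which follows from $1-s<\alpha-(\beta+\gamma)x<\gamma(1-x)$ and is precisely where $\alpha<\gamma$ enters. None of this computation appears in your write-up, so the claimed bound $2$ for $\Bct$ is not established; without it the argument proves only ``at most $2$'' for $\Bbt$ and nothing for the corrected $\Bct$.

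Two further holes need real arguments rather than remarks. The rational case $\beta/(\beta+\gamma)\in\QQ$ genuinely occurs (e.g.\ $\gamma=2\beta$ with $\beta$ irrational and $\alpha=1-3\beta$), and ``perturb one of them by at most $1$'' does not suffice: shifting an index by $1$ moves the selected element of the complement by $1$ or $2$, so it degrades the value-level bounds you are trying to prove, and you must still exhibit an exact two-part partition of the index set with these rational densities, which homogeneous Beatty sequences do not give; the clean route is a non-homogeneous (Fraenkel-type) partition of the index set followed by the same fractional-part computation with the phase carried along. Finally, the ``simultaneous bookkeeping'' needs the (easy but necessary) check that distinct swaps involve disjoint index pairs and change exactly one term of $\Bbt$ and one term of $\Bct$ each, using that gaps of the index sequence $I_\beta$ are at least $2$ and gaps of the complement are at most $2$. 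With these three pieces filled in, your strategy does yield the theorem, but as submitted the decisive estimates are missing.
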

    The perturbation bounds 1 and 2 in this result are best-possible \cite[Theorem 8]{beatty-paper}. 
    
    In this paper, we consider the analogous question for Webster sequences and prove an analog of Theorem \ref{thm:BeattyPartition}. Our main result, Theorem \ref{thm:main}, shows that when using Webster sequences, perturbations of at most 1 are needed to obtain a three-part partition of $\NN$. Thus, Webster sequences are more efficient in generating partitions than Beatty sequences which require perturbations of up to 2 to obtain a partition. 
    
    We also provide two efficient algorithms, Algorithm \ref{alg:1} and Algorithm \ref{alg:2}, to generate the partitions. The first algorithm outputs the $n$th element of each of the three sequences and the second algorithm outputs the assignment of the $m$th natural number to a sequence, both in $O(1)$ time. 

    Our results have natural interpretations in the context of apportionment problems and Just-In-Time sequencing problems. In the following sections, we describe some of these applications.

\subsection{Apportionment Methods}
\label{subsec:ApportionMethods}
Suppose we have a union of $k$ states, with relative populations $\alpha_1,\dots, \alpha_k$, respectively (so that $\sum_{1\leq i\leq k} \alpha_i =1$). We want to distribute $n$ seats in the house to $n$ representatives selected from these $k$ states. Denote by $s_i$ the number of representatives selected from state $i$. The numbers $s_i$ must be nonnegative integers and satisfy $\sum_{1\leq i\leq k} s_i = n$. This problem is called the \emph{Discrete Apportionment Problem}. It has its origins in the problem of seat assignments to the house of representatives in the United States. The goal is to assign the seats in the fairest possible manner. While the ``fairness" can be measured by various objective functions, most measures in the literature involve the differences between the fair share of representatives and the actual count of assigned representatives for each of the $k$ states (i.e., the quantity $|s_i - n\alpha_i|$). Some desirable properties of an apportionment method are the following. Details and more properties can be found in Balinski \& Young \cite[Appendix A]{Balinski-Young1982}, Kubiak \cite[Chapter 2.3]{Kubiak2008}, Pukelsheim \cite{Pukelsheim2017}, Jozefowska \cite[Chapter 2]{Jozefowska2007-book}, or Thapa \cite[Section 2.3]{Thapa2015}.

\emph{House Monotone}: An apportionment method should have the property that the number of seats of each state is non-decreasing when $n$ increases. This prevents the Alabama paradox, where increasing the number of seats in the house can result in a state losing seats \cite{Balinski-Young1982, Janson2012, Bautista1996}.
 
\emph{Population Monotone}: For any state $i$, the
number of representatives from state $i$ should not decrease if $\alpha_i$ increases.

\emph{Quota Condition}: 
     The ``fair share" of seats for state $i$ is $n\alpha_i$. We call this number the \emph{quota} for state $i$. We say an apportionment satisfies the \emph{quota condition} if $s_i \in \{ \fl{n\alpha_i}, \cl{n\alpha_i} \}$ for each $i$, i.e., if the actual number of seats assigned is within $\pm 1$ of the fair share.

The impossibility theorem of Balinski and Young \cite{Balinski-Young1980, Balinski-Young1982} states that it is in general not possible to satisfy all the desirable requirements (including the three above) . 

An important class of apportionment methods are divisor methods, which assign seats according to the formula 
\begin{equation*}
    s_i = d\left(\frac{\alpha_i}{\lambda}\right),
\end{equation*}
where $d(x)$ is a rounding function and $\lambda$ is a divisor chosen such that $\sum_{i=1}^k s_i = n$ (see \cite[Appendix A]{Balinski-Young1982}, \cite[Section 4.5]{Pukelsheim2017}, \cite[Section 2.6]{Jozefowska2006}, or \cite[Appendix A]{Janson2014}). All divisor methods are house and population monotone \cite[Theorem 4.3 \& Corollary 4.3.1]{Balinski-Young1982}, but in general they do not satisfy the quota condition \cite[Chapter 10]{Balinski-Young1982}.

A particular divisor method is \emph{Webster's method}, which specifies $d(x) = \cl{x- 1/2}$ as the rounding function, i.e., $\alpha_i/\lambda$ is rounded to the nearest integer \cite{Balinski-Young1980}. Webster's method has several unique properties. It is the only divisor method that satisfies the quota condition for $k = 3$ \cite[Theorem 6]{Balinski-Young1980}. It is the fairest method judged by natural criteria suggested by real-life problems \cite{Balinski-Young1980}. In the case $k=2$ and irrational densities $\alpha,\beta$, Webster's method generates the partition given by Theorem \ref{thm:webster-k=2} into the sequences $\Wa$ and $\Wb$ defined in \eqref{def-webster} (see \cite[Section 5.6]{Kubiak2008}). This justifies calling these sequences \emph{Webster sequences}.



\subsection{The Chairman Assignment Problem}
\label{subsec:ChairmanAssignProb}
A related problem is the \emph{Chairman Assignment Problem} \cite{Tijdeman1980, Meijer1973, Coppersmith2011, Schneider1996, Tijdeman1973}: Suppose a union is formed by $k$ states, with associated weights $\alpha_i$ (so that $\sum_{1\leq i\leq k} \alpha_i =1$). In each year, a union chair is selected from one of these $k$ states. We try to construct an assignment of chairs such that the number of chairs selected from each state $i$ in the first $n$ years should be as close as possible to its expected count $n\alpha_i$. More precisely, we seek an assignment $\omega$ that minimizes the quantity
\begin{equation}
    \label{eq:TijdemanErrorD}
    D(\omega) =  \sup_{n\in\NN}  \sup_{i = 1, \dots, k} |A_\omega(i, n)  - n\alpha_i|,
\end{equation}
where $A_\omega(i, n)$ denotes the number of chairs selected by $\omega$ from state $i$ in the first $n$ years.

It is known \cite{Meijer1973, Tijdeman1980} that $\inf_\omega  D(\omega) = 1- 1/(2k-2)$ and Tijdeman \cite{Tijdeman1980} gave an algorithm that generates, in linear time, the assignment that achieves this bound. Schneider \cite{Schneider1996} gives an improvement of this bound in some special cases. In particular, when $k = 2$,  $\inf_\omega  D(\omega) = 1/2$, and this bound is achieved by a partition into Webster sequences. 
\subsection{Just-In-Time Sequencing}
\label{subsec:JIT}

Another closely related problem is the Just-In-Time (JIT) Sequencing Problem (see Jozefowska \cite{Jozefowska2007-book} and Kubiak \cite{Kubiak2008}). In this problem, we seek to produce $k$ types of products with demands $d_1, \dots, d_k \in \NN$, respectively, such that $\sum_{i=1}^k d_i = N$. Assume that it takes one unit of time to produce one product of any kind. Then, the production rate for product $i$ is $\alpha_i = d_i/N$ and the ``ideal" production for product $i$ at time $n$ is $n\alpha_i$. To measure the deviation of the actual productions of products from their ideal productions, Miltenburg \cite{Miltenburg1989} proposed the function
    \begin{equation}
        \label{def:deviation-max}
        \max_{n=1}^N \max_{i=1}^k  |a_{in} - n \alpha_i|,
    \end{equation}
    where $a_{in}$ denotes the actual production of product $i$ in time $n$. Note that this is an analog of \eqref{def:deviation-delta} and \eqref{eq:TijdemanErrorD}. Other authors \cite{Dhamala-Thapa2012, Miltenburg-Steiner-Yeomans1990, Inman-Bulfin1991, Miltenburg-Sinnamon1989, Bautista1996}  considered various types of average deviations such as
   
    \begin{equation}
        \label{def:deviation-sum}
       \sum_{n=1}^N \sum_{i=1}^k  (a_{in} - n \alpha_i)^2
    \end{equation}
    and
    \begin{equation}
        \label{def:deviation-InmanBulfin}
        \sum_{i=1}^k \sum_{n=1}^{d_i} \left(y_{in} -  \frac{n-1/2}{\alpha_i}  \right)^2,
    \end{equation}
    where $y_{in}$ denotes the time when the $n$th copy of product $i$ is produced. More general measures have been proposed by Dhamala \& Kubiak \cite{Dhamala-Kubiak2005}, Kubiak \cite{Kubiak1993}, Thapa \& Silvestrov \cite{Thapa-Silvestrov2015}, and Steiner \& Yeomans \cite{Steiner-Yeomans1993}. Efficient algorithms to minimize the different types of deviations defined above  have been given by Kubiak \& Sethi \cite{Kubiak-Sethi1994}, Steiner \& Yeomans \cite{Steiner-Yeomans1993}, and Inman \& Bulfin \cite{Inman-Bulfin1991}. Kubiak \cite[Theorem 3.1]{Kubiak2008} showed that, if one disregards the possibility of conflicting assignments, then assigning the $n$th element of the $i$th sequence to position $\cl{(n-1/2)/\alpha_i}$ is the optimal solution to a very general class of minimization problems. Note that $\cl{(n-1/2)/\alpha_i}$ is exactly the $n$th term of the Webster sequence $W_{\alpha i}$ defined in \eqref{def-webster}.
    
    There are close connections between the JIT sequencing problem and the apportionment problem. For example, the Inman-Bulfin algorithm for minimizing \eqref{def:deviation-InmanBulfin} is equivalent to the Webster divisor method (see \cite{Bautista1996} and \cite{Jozefowska2006}). Moreover, the quota condition described in Section \ref{subsec:ApportionMethods} is equivalent to requiring \eqref{def:deviation-max} to be bounded by 1. Also, the ``house monotone" condition in the JIT problem means that the total number of products of each type produced up to time $n$ is a monotone function of $n$ and thus is a natural requirement in this problem.
    

\subsection{Outline of Paper}
\label{subsec:Outline}
In Section \ref{sec:main}, we state our main theorem, Theorem \ref{thm:main}, which shows that, under some mild conditions on $\alpha, \beta, \gamma$, we can get a three-part partition of $\NN$ into sequences of densities $\alpha, \beta, \gamma$ by perturbing some elements of two of the Webster sequences $\Wa, \Wb, \Wc$ by at most 1. We give two equivalent algorithms, Algorithms \ref{alg:1} and  \ref{alg:2}, that explicitly generate the partition described in Theorem \ref{thm:main}. Section \ref{sec:aux-lemmas} gathers some auxiliary lemmas and Sections \ref{sec:proof-algos-equiv}-\ref{sec:proof-main-thm} 
contain the proofs of the main results. In Section \ref{sec:optimality}, we show that our results are best-possible in several key aspects. In Section \ref{sec:concluding-remarks}, we present some additional results and open problems.

\section{Main Results}
\label{sec:main}
\subsection{Notations and Terminology}
\label{subsec:notations}
We use $\NN$ to denote the set of positive integers, and we 
use capital letters $A,B,\dots$, to denote subsets of $\NN$ or,
equivalently, strictly increasing sequences of positive integers. We denote
the $n$th elements of such sequences by $a(n)$, $b(n)$, etc. and we will usually use the letter $m$ to denote the positions of these elements within $\NN$, i.e., we may write $m = b(n)$. 

We denote the floor and ceiling of a real number $x$ by
\begin{align*}
    \fl{x} &= \max\{n\in\NN: n \leq x\},\\
    \cl{x} &= \min\{n\in\NN: n \geq x\}.
\end{align*}
We denote the fractional part of $x$ by 
\[
\fp{x} = x-\fl{x}.
\]
Given a set $A\subset \NN$, we denote the counting function of A by 
\begin{equation}
\label{eq:def-counting-function}
\quad A(m)=\#\{m'\le m: m'\in A\} \quad (m\in\NN).
\end{equation}
For convenience, we also define $A(0) = 0$.
\begin{defn}[Webster sequences and almost Webster sequences]
 Let $\alpha\in(0,1)$.
 \begin{itemize}
     \item [(i)] We define the \emph{Webster sequence of density $\alpha$} as
     \begin{align}
    \label{def:def-webster-a}
    \Wa &=\seq{a(n)},\quad a(n)=\Cl{\frac{n-1/2}{\alpha}}.
    \end{align}
     \item [(ii)] We call a sequence $\Wat=\seq{\at(n)}$ an  \emph{almost Webster sequence of density $\alpha$} if, for any $n\in\NN$,  the $n$th element of $\Wat$ and $n$th element of $\Wa$ differ by at most 1. That is, a sequence $\Wat$ is an \emph{almost Webster sequence of density $\alpha$} if
    \begin{equation}
    \label{def:almost-webster}
        |\at(n)-a(n)| \leq 1 \quad (n\in\NN).
    \end{equation}
 \end{itemize}
\end{defn}

Note that any Webster sequence is trivially an almost Webster sequence of the same density.


\subsection{Statement of Main Theorem}
\label{subsec:results}

\begin{thm}[Partition into one exact Webster sequence and two almost
Webster sequences]
\label{thm:main}
Let $\alpha,\beta,\gamma$ satisfy
\begin{equation}
\label{eq:abc-condition}
\alpha,\beta,\gamma\in \RR^+\setminus\QQ,\quad
\alpha+\beta+\gamma=1, \quad\beta<1/2,\quad \alpha<\gamma. 
\end{equation}
Then there exists a partition of $\NN$ into a Webster sequence, $\Wa$, of density $\alpha$ and two almost Webster sequences, $\Wbt$ and $\Wct$, of densities $\beta$ and $\gamma$, respectively. That is, there exist sequences $\Wat=\seq{\at(n)}$, $\Wbt=\seq{\bt(n)}$, and $\Wct=\seq{\ct(n)}$ that partition $\NN$ and satisfy
\begin{align}
\label{eq:mainThm-bounds-a}
\at(n)&=a(n)
\quad (n\in\NN),
\\
\label{eq:mainThm-bounds-b}
\bt(n)-b(n)&\in\{-1,0,1\}
\quad (n\in\NN),
\\
\label{eq:mainThm-bounds-c}
\ct(n)-c(n)&\in\{-1,0,1\}
\quad (n\in\NN),
\end{align}
where $\seq{a(n)}, \seq{b(n)}, \seq{c(n)}$ are the Webster sequences of densities $\alpha,\beta,\gamma$, respectively.
\end{thm}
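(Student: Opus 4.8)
## Proof Proposal

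The plan is to build the partition explicitly by starting from the three exact Webster sequences $\Wa, \Wb, \Wc$ and resolving their collisions and gaps via a local "swap" procedure, exploiting the fact that $\Wa$ and $\Wb$ together already almost tile $\NN$. The key structural observation to establish first is that, because $\alpha + \beta = 1 - \gamma$, the union $\Wa \cup \Wb$ (counted with multiplicity) has counting function $a^{-1}$-style density $1-\gamma$, so on average exactly one slot per $1/\gamma$ integers is "left over" for $\Wc$, and conversely the collisions between $\Wa$ and $\Wb$ occur at density $\gamma$ as well. Concretely, I would examine the sign of the discrepancy $\Delta(n) := \Wa(n) + \Wb(n) + \Wc(n) - n$, which measures the net over/under-coverage of $\{1,\dots,n\}$; using Lemma \ref{lem:general-lemmas}(ii) (the statement that $|\Wa(n) - n\alpha| \le 1/2$ etc.) one gets $|\Delta(n)| \le 3/2$, and in fact a finer analysis using the phases should pin $\Delta(n)$ to a bounded set of small integers. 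The hypotheses $\beta < 1/2$ and $\alpha < \gamma$ are presumably exactly what is needed to control these discrepancies so that only $\pm 1$ perturbations are ever required, and to single out $\Wa$ as the sequence that can be left unperturbed.

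The core of the argument is a matching/assignment step. I would set up a bipartite-style correspondence between the "excess" positions (integers covered two or three times by $\Wa \cup \Wb \cup \Wc$) and the "deficit" positions (integers covered zero times), and show that they can be paired up so that each pair is within distance $1$. For this I expect to use a greedy left-to-right sweep: maintain a small queue of pending corrections, and argue by the bounded-discrepancy estimate that the queue never grows and that every collision gets matched to a nearby gap before the phases drift too far. The perturbation is then: whenever $b(n)$ (or $c(n)$) collides with an already-claimed integer, push it to the adjacent free integer; the distance bound $|\bt(n) - b(n)| \le 1$ follows because the nearest gap is always within $1$ of the colliding element, which in turn follows from the density bookkeeping. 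One must also verify that $\Wbt$ and $\Wct$ remain strictly increasing after perturbation (no new collisions are introduced), and that $\Wa$ can consistently win every tie it is involved in — this is where $\alpha < \gamma$ and $\beta < 1/2$ should guarantee that $\Wa$'s elements never need to move.

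I anticipate the main obstacle is precisely showing that the required perturbations stay within $\pm 1$ and never cascade — i.e., that correcting one collision does not create a chain of forced moves. The natural tool is a potential/invariant argument: define a signed count of "unresolved" slots up to position $n$ and show it stays in $\{-1,0,1\}$ (or a similarly tiny interval), using the three bounds $|\Wa(n)-n\alpha|, |\Wb(n)-n\beta|, |\Wc(n)-n\gamma| \le 1/2$ together with irrationality (to rule out exact equality in the boundary cases). A secondary subtlety is the consistent tie-breaking rule among $\Wa, \Wb, \Wc$ at a triple collision or at adjacent collisions; I would fix a priority order (say $\Wa > \Wc > \Wb$, matching which sequences are allowed to be perturbed) and check that this order, combined with \eqref{eq:abc-condition}, is self-consistent. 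Once the invariant is in place, the bounds \eqref{eq:mainThm-bounds-a}--\eqref{eq:mainThm-bounds-c} and the partition property both fall out, and the explicit description of which elements get perturbed is what will feed into Algorithms \ref{alg:1} and \ref{alg:2}. Given that the paper promises constant-time algorithms, I would expect the actual construction to be more explicit than a greedy sweep — likely a closed-form rule keyed on the fractional parts $\{n\gamma\}$ or $\{k\alpha\}$ — so in writing the full proof I would aim to identify that closed form directly rather than only proving existence abstractly.
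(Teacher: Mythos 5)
There is a genuine gap at the heart of your plan: the claim that ``the nearest gap is always within $1$ of the colliding element, which in turn follows from the density bookkeeping.'' The discrepancy bound $|\Delta(n)|\le 3/2$ (or even the sharper bounds $|\Wa(n)-n\alpha|,|\Wb(n)-n\beta|,|\Wc(n)-n\gamma|\le 1/2$) controls only the \emph{number} of unresolved collisions and gaps up to $n$; it says nothing about their relative \emph{positions}, and so it cannot by itself prevent a collision whose nearest free slot is two or more steps away, nor a cascade in which resolving one collision displaces the resolution of the next. That this is not a technicality is shown by the Beatty analogue, Theorem \ref{thm:BeattyPartition}: there the same kind of bookkeeping is available, yet perturbations of size $2$ are provably necessary for the third sequence. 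So the whole content of Theorem \ref{thm:main} is exactly the step you defer to a hoped-for ``potential/invariant argument,'' and the conditions $\beta<1/2$, $\alpha<\gamma$ enter not as a vague safety margin but through specific inequalities (e.g.\ $\alpha+\beta/2<1/2<\gamma+\beta/2$, a consequence of \eqref{eq:abc-condition}) that rule out the bad configurations of fractional parts. A related soft spot is your tie-breaking rule: a fixed priority order, or ``push to the adjacent free integer,'' is not enough. In the paper's construction the direction in which a colliding element $b(n)\in\Wa\cap\Wb$ is moved is decided by comparing $v_{b(n)}=\fp{b(n)\beta+1/2}$ with $\beta/2$, i.e.\ by which side of $b(n)$ the ideal position $(n-1/2)/\beta$ lies on; this balanced choice is what keeps the leftover set (which becomes $\Wct$) within $\pm1$ of $\Wc$ term by term, and an unbalanced rule can push the complement off by $2$.

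Your closing instinct is right: the paper does not run a greedy sweep at all but gives the closed-form rule you anticipated. It sets $\Wat=\Wa$, perturbs $b(n)$ only when $b(n)\in\Wa$ (direction chosen by $v_{b(n)}\lessgtr\beta/2$), and defines $\Wct=\NN\setminus(\Wa\cup\Wbt)$; the bound $|\ct(n)-c(n)|\le1$ is then proved by first characterizing the counting-function errors $\Wbt(n)-\Wb(n)$ and $\Wct(n)-\Wc(n)$ as explicit regions in the $(u_n,v_n)$ unit square (Propositions \ref{prop:error-counting-b} and \ref{prop:error-counting-c}, using the membership and counting formulas of Lemma \ref{lem:general-lemmas} and the identity $w_n=1-\fp{u_n+v_n-1/2}$), and then converting counting-function control into element-wise control through Lemmas \ref{lem:c(n-1)<=ctn}--\ref{lem:ctn<=cn+1}. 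To complete your proposal you would need to supply precisely this kind of quantitative analysis of where collisions sit relative to gaps in terms of the fractional parts; without it the $\pm1$ bound, and hence \eqref{eq:mainThm-bounds-b}--\eqref{eq:mainThm-bounds-c}, remains unproved.
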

In Section \ref{sec:optimality}, we will show that this result is best-possible in the sense that the conditions in \eqref{eq:mainThm-bounds-b} and \eqref{eq:mainThm-bounds-c} cannot be replaced by stronger conditions (see Theorem \ref{thm:optimality:one-sided-impossible}). In particular, it is, in general, not possible to obtain a partition of $\NN$ into two exact Webster sequences and one almost Webster sequence with prescribed densities (see Theorem \ref{thm:optimality:TwoExactOneAlmost}).

\subsection{Partition Algorithms}
\label{subsec:algos}
In the following, we provide two equivalent algorithms that generate the partition in Theorem \ref{thm:main}.
We assume that $\alpha, \beta, \gamma$ satisfy condition \eqref{eq:abc-condition}. Note that condition \eqref{eq:abc-condition} implies
\begin{equation}
\label{eq:ab-less-half}
    \alpha,\beta < 1/2 \text{ and } \gamma > 1/4
\end{equation}
and 
\begin{equation}
    \label{eqproof:a+b/2<1/2}
    \alpha + \beta/2 < 1/2 <   \gamma + \beta/2.
\end{equation}

We introduce the following notation that will appear frequently in the remainder of this paper:
\begin{equation}
\label{def:uv}
    u_m = \fp{m\alpha+1/2},\quad v_m = \fp{m\beta+1/2},
\end{equation}
where $\fp{x}$ denotes the fractional part of $x$. Note that our assumption $\alpha,\beta \notin \QQ$ implies
\begin{equation}
    \label{eq:uvequality-impossible}
    u_m \notin \alpha\QQ+\ZZ, \quad  v_m \notin \beta\QQ+\ZZ. 
\end{equation}
To prove \eqref{eq:uvequality-impossible}, suppose $u_m \in \alpha\QQ + \ZZ$. Then $m\alpha + 1/2 = \alpha q + k$ for some $q\in\QQ$ and $k\in \ZZ$. If $q = m$, then $1/2 = k$, which contradicts $k\in\ZZ$. If $q\neq m$, then $\alpha = (k-1/2) / (m-q)$, which contradicts $\alpha \notin \QQ$.

The partition algorithms receive inputs $\alpha, \beta, \gamma$, and output almost Webster sequences $\Wat, \Wbt,\Wct$ that partition $\NN$. Our first algorithm generates the $n$th element in each of the sequences $\Wat, \Wbt, \Wct$.
\begin{alg}
\label{alg:1}
Let  $\Wat =\seq{\at(n)}$ and $\Wbt=\seq{\bt(n)}$ be defined by
\begin{align}
\label{eq:thm-at-def}
&\at(n) =  a(n),\\ 
\label{eq:thm-bt-def}
&\bt(n)=\begin{cases}
b(n)
&\text{if $u_m > \alpha$;}
\\
b(n)-1 &\text{if $u_m  < \alpha$ and $ v_m > \beta/2$;}
\\
b(n)+1 &\text{if $u_m  < \alpha$ and $v_m < \beta/2$,}
\end{cases}
\end{align}
where $m = b(n)$, and let $\Wct=\NN\setminus(\Wa\cup \Wbt)=\seq{\ct(n)}$. 
\end{alg}
Note that the three conditions in  \eqref{eq:thm-bt-def} are mutually exclusive and exhaust all possibilities, since by \eqref{eq:uvequality-impossible}, the cases $u_m=\alpha$ and $v_m = \beta/2$ cannot occur. Moreover, by \eqref{eq:thm-at-def} and \eqref{eq:thm-bt-def}, the sequences $\seq{\at(n)}$ and $\seq{\bt(n)}$ generated by Algorithm \ref{alg:1} clearly satisfy the conditions \eqref{eq:mainThm-bounds-a} and \eqref{eq:mainThm-bounds-b} of Theorem \ref{thm:main} and thus, in particular, are almost Webster sequences. In Proposition \ref{prop:error-diff-c} below, we will show that the sequence $\seq{\ct(n)}$ satisfies condition \eqref{eq:mainThm-bounds-c} and thus is also an almost Webster sequence. 

We remark that Algorithm \ref{alg:1} gives the $n$th element of the sequences $\Wa, \Wbt, \Wct$ in $O(1)$ time. This is clear from \eqref{eq:thm-at-def} and \eqref{eq:thm-bt-def} in the case of the sequences $\Wa$ and $\Wbt$. In the case of the sequence $\Wct$, a similar characterization of $\ct(n)$ proved in Proposition \ref{prop:error-diff-c} below allows one to compute the $n$th term of $\Wct$ in $O(1)$ time. By contrast, most algorithms in the literature on scheduling problems are recursive and therefore have run-time at least $O(n)$.

Our second algorithm outputs, for a given $m\in\NN$, the sequence to which $m$ is assigned. 
\begin{alg}
\label{alg:2}
For any $m \in \NN$, let
\begin{equation}
\label{eq:alg2-m}
m\in \begin{cases}
\Wat
&\text{if $u_m < \alpha$;}
\\
\Wbt&\text{if ($u_m > \alpha $ and $v_m<\beta$) or ($u_m >1-\alpha $ and $v_m>1-\beta/2$)}\\
&\text{\quad or ($\alpha<u_m <2\alpha $ and $\beta<v_m<3\beta/2$);}
\\
\Wct&\text{otherwise.}
\end{cases}
\end{equation}
\end{alg}

Again, noting that $\alpha < 1/2$ (see \eqref{eq:ab-less-half}), the three cases in \eqref{eq:alg2-m} are mutually exclusive, and exhaust all possibilities, so the algorithm indeed generates a partition of $\NN$.

In Section \ref{sec:proof-algos-equiv}, we will prove that the two algorithms are equivalent.
\begin{prop}
\label{prop:alg-equivalent}
Algorithm \ref{alg:1} and Algorithm \ref{alg:2} generate the same sequences.
\end{prop}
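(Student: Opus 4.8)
The plan is to show that the two algorithms assign each $n\in\NN$ to the same sequence. Since Algorithm~\ref{alg:1} defines $\Wat,\Wbt$ explicitly (with $\Wct$ the complement) while Algorithm~\ref{alg:2} directly classifies a given integer $n$, the natural strategy is to fix $n$ and chase through the definitions. First I would dispose of the easy sequence: by \eqref{eq:thm-at-def} we have $\Wat=\Wa$, and by Theorem~\ref{thm:webster-k=2}-type reasoning (more precisely, via the auxiliary lemma characterizing when $n$ lies in a Webster sequence — Lemma~\ref{lem:general-lemmas} or its analog in Section~\ref{sec:aux-lemmas}), $n\in\Wa$ if and only if $u_n<\alpha$. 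This matches the first case of \eqref{eq:alg2-m} exactly, so on the set $\{n:u_n<\alpha\}$ the two algorithms agree and neither puts $n$ into $\Wbt$ or $\Wct$.

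The substance is the case $u_n>\alpha$, i.e. $n\notin\Wa$; here I must show $n\in\Wbt$ (as defined in Algorithm~\ref{alg:1}) if and only if the disjunction in the second case of \eqref{eq:alg2-m} holds. The key translation step is this: $n\notin\Wa$ means $n$ is the $k$th element of $\Wct=\NN\setminus(\Wa\cup\Wbt)$ for some $k$, but before that subtraction it is some element of $\NN\setminus\Wa$; I want to understand, for such $n$, which index $j$ (if any) satisfies $b(j)=n$ or $\bt(j)=n$. Using the counting-function identity $\Wa(n)=\fl{n\alpha+1/2}$ (so that the position of $n$ within $\NN\setminus\Wa$ is $n-\fl{n\alpha+1/2}$), together with the Webster counting identity $\Wbt(n)=\Wb(n)=\fl{n\beta+1/2}$ when $\bt$ has not been perturbed across $n$, I can express everything in terms of $u_n$ and $v_n=\fp{n\beta+1/2}$. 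Concretely, I expect: if $u_n>\alpha$ then $m:=b(j)$ for the relevant $j$ equals $n$ precisely when the inequalities defining the branches of \eqref{eq:thm-bt-def} hold, and the three branches ``$b(n)$'', ``$b(n)-1$'', ``$b(n)+1$'' correspond, after carrying out the translation $m=n$, exactly to the three clauses ``$u_n>\alpha$ and $v_n<\beta$'', ``$u_n>1-\alpha$ and $v_n>1-\beta/2$'', and ``$\alpha<u_n<2\alpha$ and $\beta<v_n<3\beta/2$'' in \eqref{eq:alg2-m}. The shifts by $\pm1$ in the $b(n)\pm1$ branches are what convert the condition ``$u_m<\alpha$ with $v_m$ in a certain subinterval of $(0,\beta)$'' into the conditions on $u_n,v_n$ at the shifted argument; tracking how $u$ and $v$ change when the argument changes by the relevant amount (and using \eqref{eqproof:a+b/2<1/2} and \eqref{eq:ab-less-half} to ensure no interval wraps around incorrectly) is the bookkeeping core of the argument. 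Once the membership in $\Wbt$ is matched, membership in $\Wct$ follows for both algorithms by complementation.

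The main obstacle I anticipate is the careful case analysis needed to verify that the perturbation branches of \eqref{eq:thm-bt-def}, which are indexed by $m=b(n)$ in ``$\Wbt$-coordinates'', line up with the conditions in \eqref{eq:alg2-m}, which are stated in ``$\NN$-coordinates'' in terms of $u_n,v_n$. This requires (a) confirming that the element $m=n$ really is the value of $b(j)$ for the intended $j$ — i.e. that $n$ is not itself in $\Wa$, already handled, and that the index $j$ with $b(j)=n$ or $b(j)=n\mp1$ exists and is unique — and (b) checking that the subintervals $(0,\beta/2)$, $(\beta/2,\beta)$, and their shifted images under $\pm1$ partition correctly, with the strict inequalities (justified by \eqref{eq:uvequality-impossible}, which rules out boundary coincidences) and the constraints $\alpha,\beta<1/2$, $\gamma>1/4$ guaranteeing the pieces are disjoint and exhaust the right set. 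I would organize this as a short lemma extracting the coordinate translation $n\leftrightarrow(j,u_n,v_n)$, then verify the three branches one at a time; the irrationality hypotheses make every inequality strict, so no degenerate cases arise. Finally, I would note that once $\Wat,\Wbt$ agree between the two algorithms, $\Wct$ agrees automatically since both define it as $\NN\setminus(\Wa\cup\Wbt)$, completing the proof of Proposition~\ref{prop:alg-equivalent}.
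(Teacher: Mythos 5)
Your plan is correct and follows essentially the same route as the paper: identify $\Wat=\Wa$ via the membership criterion $u_n<\alpha$, let $\Wct$ agree by complementation, and match the three branches of \eqref{eq:thm-bt-def} with the three clauses of \eqref{eq:alg2-m} by tracking how $u_n,v_n$ transform under $n\mapsto n\pm1$ (the paper packages exactly this bookkeeping in Lemmas \ref{lem:uv-P1M1} and \ref{lem:uP1-uM1}). The brief detour through counting functions such as $\Wa(n)=\fl{n\alpha+1/2}$ is unnecessary for this argument, but it does not affect the correctness of the approach.
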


In Section \ref{sec:proof-counting-prop}, we will show that the counting functions for the sequences generated by Algorithm \ref{alg:2} differ from the counting functions for the corresponding Webster sequences by at most 1. This result will be a key step to proving Theorem \ref{thm:main}. 
\begin{prop}
\label{prop:countingError}
 The sequences $\Wat, \Wbt, \Wct$ constructed in Algorithm \ref{alg:2} satisfy 
 \begin{align}
    \label{eq:countingError-a}
     &\Wat(m) = \Wa(m) \quad(m\in\NN), \\
     \label{eq:countingError-b}
    &\Wbt(m)-\Wb(m)\in \{-1,0,1\} \quad(m\in\NN), \\
    \label{eq:countingError-c}
    &\Wct(m)-\Wc(m)\in \{-1,0,1\} \quad(m\in\NN).
\end{align}
\end{prop}
\begin{remark}
\label{remark:quota} 
It is not difficult to deduce from this result and Propositions \ref{prop:error-counting-b} and \ref{prop:error-counting-c} below that the sequences $\Wat$, $\Wbt$, and $\Wct$ satisfy the quota condition, i.e., $\Wat(m) \in \{\fl{m\alpha}, \cl{m\alpha}\}$, $\Wbt(m) \in \{\fl{m\beta}, \cl{m\beta}\}$ and $\Wct(m) \in \{\fl{m\gamma}, \cl{m\gamma}\}$. In the case of $\Wat$, this is obvious since  $\Wat(m) = \Wa(m) = \fl{m\alpha +1/2}$ (see Lemma \ref{lem:general-lemmas}(ii) below). For $\Wbt$ and $\Wct$, this can be derived from the explicit characterizations of the errors $-1,0,1$ in \eqref{eq:countingError-b} and \eqref{eq:countingError-c} given in Propositions \ref{prop:error-counting-b} and \ref{prop:error-counting-c} below.
\end{remark}

\begin{remark}
Note that \eqref{eq:countingError-b} and \eqref{eq:mainThm-bounds-b} represent different ways to measure the deviation of a sequence $\Wbt$ from the corresponding Webster sequence $\Wb$. In \eqref{eq:mainThm-bounds-b}, the $n$th terms of the two sequences are compared, while in \eqref{eq:countingError-b}, the counting functions of the sequences are compared. In fact, \eqref{eq:mainThm-bounds-b} implies \eqref{eq:countingError-b}, but \eqref{eq:countingError-b} does not imply \eqref{eq:mainThm-bounds-b}. For example, for sequences $B = \seq{b(n)} = \seq{nk}$ and $\widetilde{B} = \seq{\bt(n)} =  \seq{nk + k/2}$ for a fixed even integer $k > 2$, we have $\widetilde{B}(m) - B(m) \in \{-1, 0, 1\}$ for all $m$, but $\widetilde{b}(n) - b(n) = k/2 \notin \{-1, 0, 1\}$. Thus, Proposition \ref{prop:countingError} can be regarded as a weaker version of Theorem \ref{thm:main}.

\end{remark}

\section{Auxiliary Lemmas}
\label{sec:aux-lemmas}

In this section, we provide some useful identities regarding the floor and fractional part functions and some elementary properties of Webster sequences. Most of these results are analogous to results proved in \cite[Section 3]{beatty-paper}.



\begin{lem}[Generalized Weyl's Theorem]
\label{lem:weyl-general}
Let $\theta_1,\dots,\theta_k$ be real numbers such that 
the numbers $1,\theta_1,\dots,\theta_k$ are linearly independent over
$\QQ$. Let $\eta_1, \dots, \eta_k$ be arbitrary real numbers.
Then the $k$-dimensional sequence $(\fp{n\theta_1+\eta_1}, \dots, \fp{n\theta_k+\eta_k)})$
is \emph{uniformly distributed modulo $1$ in $\RR^k$}. That is,  
we have
\[
\lim_{N\to\infty}
\frac1N\#\{n\le N: \fp{n\theta_i+\eta_i}<t_i\text{ for $i=1,\dots,k$}\}
=t_1\cdots t_k\quad (0\le t_i\le 1).
\]
\end{lem}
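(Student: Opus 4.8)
The plan is to deduce this from Weyl's criterion for uniform distribution modulo $1$ in $\RR^k$, together with a one-line exponential-sum estimate that exploits the linear-independence hypothesis. Recall (see standard references on uniform distribution, e.g., Kuipers and Niederreiter) that a sequence $(\mathbf{x}_n)_{n\in\NN}$ in $\RR^k$ is uniformly distributed modulo $1$ --- meaning that $\frac1N\#\{n\le N:\mathbf{x}_n\bmod 1\in B\}\to\operatorname{vol}(B)$ for every Jordan-measurable set $B\subseteq[0,1)^k$, and in particular for every axis-parallel box $[0,t_1)\times\cdots\times[0,t_k)$ --- if and only if for every nonzero $\mathbf{h}=(h_1,\dots,h_k)\in\ZZ^k$ one has
\[
\frac1N\sum_{n=1}^N e^{2\pi i\langle\mathbf{h},\mathbf{x}_n\rangle}\longrightarrow 0\qquad(N\to\infty).
\]
I would apply this with $\mathbf{x}_n=(n\theta_1+\eta_1,\dots,n\theta_k+\eta_k)$, so that the asserted limit is precisely the uniform-distribution statement evaluated on the box $[0,t_1)\times\cdots\times[0,t_k)$, whose volume is $t_1\cdots t_k$ (the endpoint cases $t_i\in\{0,1\}$ being trivially consistent, since a fractional part always lies in $[0,1)$).

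It then remains to verify Weyl's exponential-sum condition. Fix $\mathbf{h}\in\ZZ^k\setminus\{0\}$ and put $\theta=\sum_{j=1}^k h_j\theta_j$, $\eta=\sum_{j=1}^k h_j\eta_j$, so that $\langle\mathbf{h},\mathbf{x}_n\rangle=n\theta+\eta$. Since $1,\theta_1,\dots,\theta_k$ are linearly independent over $\QQ$ and $\mathbf{h}\neq 0$, the number $\theta$ is irrational; in particular $\theta\notin\ZZ$, hence $e^{2\pi i\theta}\neq 1$. Summing the geometric progression and using $|e^{2\pi i\eta}|=1$,
\[
\left|\sum_{n=1}^N e^{2\pi i(n\theta+\eta)}\right|
=\left|\sum_{n=1}^N e^{2\pi i n\theta}\right|
=\left|\frac{e^{2\pi i\theta}\bigl(e^{2\pi i N\theta}-1\bigr)}{e^{2\pi i\theta}-1}\right|
\le\frac{2}{\left|e^{2\pi i\theta}-1\right|},
\]
a bound independent of $N$. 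Dividing by $N$ and letting $N\to\infty$ gives the required limit $0$ for every nonzero $\mathbf{h}$, so Weyl's criterion yields uniform distribution modulo $1$ of $(\mathbf{x}_n)$, which is the lemma.

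The only genuine subtlety is Weyl's criterion itself, namely the passage from the exponential-sum condition to the box-counting statement; this rests on sandwiching the indicator of a box between trigonometric polynomials (via Fej\'er-type kernels), and I would cite it as a standard fact rather than reproduce the approximation argument. If a fully self-contained treatment were wanted, one could instead induct on $k$ --- the case $k=1$ being the classical Weyl equidistribution theorem and the inductive step disintegrating the box over the last coordinate --- but this route is more cumbersome and I would not pursue it.
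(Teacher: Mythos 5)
Your proposal is correct, and it is essentially the same route as the paper: the paper simply cites Theorem 6.3 in Chapter 1 of Kuipers and Niederreiter (the multidimensional Weyl criterion, with Example 6.1 doing exactly your geometric-series computation for $\mathbf{x}_n=(n\theta_1+\eta_1,\dots,n\theta_k+\eta_k)$). Your write-up just makes explicit the exponential-sum estimate and the use of the linear-independence hypothesis that the cited reference supplies, and it does so correctly.
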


\begin{proof}
This is a special case of Theorem 6.3 in Chapter 1 of Kuipers and Niederreiter \cite{Kuipers-Niederreiter2012} (see Example 6.1).
\end{proof}

\begin{cor}[Uniform distribution of $\fp{n\alpha+1/2}$]
\label{cor:unif-a}
Let $\alpha$ be an irrational number. Then the sequence 
\[
\fp{n\alpha+1/2},\quad n=1,2,3,\dots
\]
is uniformly distributed on the unit interval $(0,1)$.
\end{cor}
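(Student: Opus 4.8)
The plan is to obtain this as the one-dimensional case of the Generalized Weyl's Theorem (Lemma \ref{lem:weyl-general}). First I would specialize that lemma to $k=1$, taking $\theta_1 = \alpha$ and $\eta_1 = 1/2$. The only hypothesis that needs checking is that $1$ and $\alpha$ are linearly independent over $\QQ$. This is immediate from the irrationality of $\alpha$: a nontrivial rational relation $p\cdot 1 + q\cdot\alpha = 0$ would force $q\neq 0$ (since $\alpha \neq 0$), hence $\alpha = -p/q \in \QQ$, a contradiction.

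Once the hypothesis is verified, Lemma \ref{lem:weyl-general} applies verbatim and yields that the sequence $\seq{\fp{n\alpha+1/2}}$ is uniformly distributed modulo $1$ in $\RR$, that is,
\[
\lim_{N\to\infty}\frac1N\#\{n\le N:\fp{n\alpha+1/2}<t\}=t\qquad(0\le t\le 1),
\]
which is precisely the assertion that the sequence is uniformly distributed on the unit interval $(0,1)$.

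There is essentially no obstacle here: the entire content is the reduction to $k=1$ together with the one-line verification of $\QQ$-linear independence. If one preferred a self-contained argument not relying on the general statement, the classical Weyl criterion works just as well: for every nonzero integer $h$ one has $\frac1N\sum_{n=1}^N e^{2\pi i h(n\alpha+1/2)} = \frac{e^{\pi i h}}{N}\sum_{n=1}^N e^{2\pi i h n\alpha}$, a geometric sum of modulus at most $\frac{1}{N\,|\sin(\pi h\alpha)|}$, where the denominator is nonzero because $h\alpha\notin\ZZ$; letting $N\to\infty$ gives $0$, and Weyl's criterion then delivers equidistribution.
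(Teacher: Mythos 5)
Your proposal is correct and follows the same route as the paper: the paper's proof is exactly the specialization of Lemma \ref{lem:weyl-general} with $\theta_1=\alpha$, $\eta_1=1/2$, and your verification of the $\QQ$-linear independence of $1,\alpha$ (plus the optional Weyl-criterion aside) only fills in details the paper leaves implicit.
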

\begin{proof}
Apply Lemma \ref{lem:weyl-general} with $\theta_1 = \alpha, \eta_1 = 1/2$.
\end{proof}

\begin{cor}[Uniform distribution of pairs $(\fp{n\alpha+1/2}, \fp{n\beta+1/2})$] 
\label{cor:unif-ab}
Let $\alpha,
\beta$ be irrational numbers such that $1,\alpha, \beta$ are independent over $\QQ$. Then the pairs 
\[
(\fp{n\alpha+1/2}, \fp{n\beta+1/2}),\quad n=1,2,3,\dots
\]
are uniformly distributed on the unit square $(0,1)\times(0,1)$.
\end{cor}
\begin{proof}
Apply Lemma \ref{lem:weyl-general} with  $\theta_1 = \alpha, \theta_2 = \beta, \eta_1 = \eta_2 = 1/2$.
\end{proof}

\begin{lem}[Elementary properties of Webster sequences (cf. {\cite[Lemma 10]{beatty-paper}})]
\label{lem:general-lemmas}
Let $\alpha\in(0,1) \setminus 
\QQ$, and let 
$\Wa = \seq{a(n)} = \seq{\lceil(n-1/2)/\alpha\rceil} = \seq{\fl{(n-1/2)/\alpha}+1}$ be the Webster sequence of density $\alpha$. 
\begin{itemize}
\item[(i)] \textbf{Membership criterion:}
For any $m\in\NN$ we have
\begin{equation*}
m\in\Wa
\Longleftrightarrow \fp{m\alpha+1/2}<\alpha.
\end{equation*}
\item[(ii)] \textbf{Counting function formula:}
For any $m\in\NN$ we have
\begin{equation*}
\Wa(m)=\fl{m\alpha+1/2},
\end{equation*}
where $\Wa(m)$ is the counting function of $\Wa$.
\item[(iii)]  \textbf{Gap criterion:}
Given $m\in \Wa$, let
$m'$ denote the successor to $m$ in the sequence 
$\Wa$. Let $k=\fl{1/\alpha}$. Then, $m'=m+k$ or $m'=m+k+1$. Moreover, for any $m\in\NN$,
\begin{align}
\label{eq:gap-criterion-1}
m\in\Wa\text{ and } m'=m+k
&\Longleftrightarrow 
\fp{1/\alpha}\alpha< \fp{m\alpha+1/2}<\alpha;
\\
\label{eq:gap-criterion-2}
m\in\Wa\text{ and } m'=m+k+1
&\Longleftrightarrow 
\fp{m\alpha+1/2}<\fp{1/\alpha}\alpha.
\end{align}
\end{itemize}
\end{lem}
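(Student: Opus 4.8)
The plan is to derive all three parts from the single fundamental observation that $m \in \Wa$ iff $m$ is the value $\cl{(n-1/2)/\alpha}$ for some $n \in \NN$, which we translate into a condition on $\fp{m\alpha+1/2}$. First I would prove (i). Since $\alpha$ is irrational, $(n-1/2)/\alpha$ is never an integer, so $a(n) = \cl{(n-1/2)/\alpha} = \fl{(n-1/2)/\alpha} + 1$; hence $m = a(n)$ for some $n$ exactly when $\fl{(n-1/2)/\alpha} = m-1$, i.e.\ when $m-1 \le (n-1/2)/\alpha < m$, i.e.\ when $(m-1)\alpha + 1/2 \le n < m\alpha + 1/2$. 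Such an integer $n$ exists iff the half-open interval $[(m-1)\alpha+1/2,\ m\alpha+1/2)$ of length $\alpha < 1$ contains an integer, which happens iff $\fp{(m-1)\alpha+1/2} \ge 1-\alpha$... it is cleaner to say: an interval $[x, x+\alpha)$ with $0<\alpha<1$ contains an integer iff $\fp{x+\alpha} < \alpha$ or $\fp x = 0$; since $\fp{x}=0$ is impossible here (irrationality, cf.\ the argument around \eqref{eq:uvequality-impossible}), the condition is $\fp{m\alpha+1/2} < \alpha$. This gives (i).

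Next, (ii) follows from (i) by summation: $\Wa(m) = \#\{j \le m : j \in \Wa\} = \#\{j \le m : \fp{j\alpha+1/2} < \alpha\}$. I would instead argue directly that $\Wa(m)$ counts the $n$ with $a(n) \le m$, i.e.\ with $\cl{(n-1/2)/\alpha} \le m$, i.e.\ with $(n-1/2)/\alpha \le m$ (using that the ceiling of a non-integer is $\le m$ iff the number itself is $< m$, and here $\le m$ iff $< m$ or $=m$ — again irrationality rules out equality), i.e.\ with $n \le m\alpha + 1/2$; the number of such positive integers $n$ is $\fl{m\alpha+1/2}$ (here $m\alpha+1/2>0$, and $m\alpha+1/2$ is not an integer, so no boundary issues). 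This is the quickest route and avoids invoking (i).

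For (iii), with $k = \fl{1/\alpha}$, I would use that consecutive elements $a(n), a(n+1)$ of $\Wa$ satisfy $a(n+1) - a(n) = \cl{(n+1/2)/\alpha} - \cl{(n-1/2)/\alpha}$, and this difference is either $k$ or $k+1$ since $1/\alpha = k + \fp{1/\alpha}$ with $0 < \fp{1/\alpha} < 1$ (a standard fact about ceilings of an arithmetic progression with common difference $1/\alpha$; Lemma \ref{lem:floor-identities} handles the bookkeeping if needed). To get the precise dichotomy \eqref{eq:gap-criterion-1}--\eqref{eq:gap-criterion-2}, write $m = a(n)$, so $m' = a(n+1) = a(n) + k + \delta$ where $\delta \in \{0,1\}$; one computes $a(n+1) = \cl{(n+1/2)/\alpha} = \cl{(n-1/2)/\alpha + 1/\alpha}$ and compares with $a(n) + k = \cl{(n-1/2)/\alpha} + k$. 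Using $1/\alpha = k + \fp{1/\alpha}$ and Lemma \ref{lem:floor-identities} (the ceiling version, or by passing to floors via $\cl x = \fl x + 1$ for non-integer $x$), the extra $+1$ appears precisely when $\fp{(n-1/2)/\alpha} + \fp{1/\alpha} \ge 1$. Finally I would convert $\fp{(n-1/2)/\alpha}$ into a statement about $\fp{m\alpha+1/2}$: from $m = \fl{(n-1/2)/\alpha}+1$ one gets $(n-1/2)/\alpha \in [m-1, m)$ and more precisely $\fp{(n-1/2)/\alpha} = (n-1/2)/\alpha - (m-1)$, and combining with $m\alpha + 1/2 - n = \alpha(\,m - (n-1/2)/\alpha\,) = \alpha(1 - \fp{(n-1/2)/\alpha})$, so $\fp{m\alpha+1/2} = \alpha(1 - \fp{(n-1/2)/\alpha})$ when this is $<\alpha$, which it is by (i). Hence $\fp{(n-1/2)/\alpha} = 1 - \fp{m\alpha+1/2}/\alpha$, and the condition $\fp{(n-1/2)/\alpha} + \fp{1/\alpha} \ge 1$ becomes $\fp{m\alpha+1/2} \le \fp{1/\alpha}\alpha$, which by irrationality is $\fp{m\alpha+1/2} < \fp{1/\alpha}\alpha$, giving \eqref{eq:gap-criterion-2}; the complementary case, intersected with $m \in \Wa$ (i.e.\ $\fp{m\alpha+1/2}<\alpha$), yields \eqref{eq:gap-criterion-1}.

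The main obstacle I anticipate is part (iii): keeping the fractional-part algebra straight when relating $\fp{(n-1/2)/\alpha}$, $\fp{m\alpha+1/2}$, and $\fp{1/\alpha}$, and making sure every "$\le$ vs.\ $<$" is correctly resolved by irrationality (so that the criteria in \eqref{eq:gap-criterion-1}--\eqref{eq:gap-criterion-2} partition the relevant range cleanly with no overlap or gap). Parts (i) and (ii) are routine interval-counting arguments. Since the statement says these results are analogous to \cite[Lemma 10]{beatty-paper}, I would also check whether a change of variables ($n \mapsto n - 1/2$, shifting phase) lets one quote the Beatty-sequence versions directly, which would shorten the proof considerably.
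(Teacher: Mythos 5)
Your proposal is correct and follows essentially the same route as the paper: part (i) by unwinding the defining inequalities into a condition on $\fp{m\alpha+1/2}$, part (ii) by counting the $n$ with $a(n)\le m$, and part (iii) by the floor-addition identity giving the gap $k$ or $k+1$. The only cosmetic difference is in (iii), where the paper converts the carry condition by testing membership of $m+k$ (via $k\alpha=1-\fp{1/\alpha}\alpha$) while you translate $\fp{(n-1/2)/\alpha}$ into $\fp{m\alpha+1/2}$ directly; both rest on the same identity and handle the strict inequalities via irrationality.
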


Note that by \eqref{eq:uvequality-impossible}, equality cannot hold in \eqref{eq:gap-criterion-1} and \eqref{eq:gap-criterion-2}.

\begin{proof}
(i) We have 
\begin{align*}
m\in\Wa&\Longleftrightarrow
m=\fl{(n-1/2)/\alpha+1}\text{ for some $n\in\NN$}
\\
&\Longleftrightarrow  m< (n-1/2)/\alpha+1<m+1\text{ for some $n\in\NN$}
\\
&\Longleftrightarrow  (m-1)\alpha< (n-1/2)<m\alpha\text{ for some $n\in\NN$}
\\
&\Longleftrightarrow  (m-1)\alpha+1/2< n<m\alpha+1/2\text{ for some $n\in\NN$}
\\
&\Longleftrightarrow \fp{m\alpha+1/2} < \alpha.
\end{align*}
(ii) We have
\begin{align*}
\Wa(m)=n&\Longleftrightarrow
\fl{(n-1/2)/\alpha}+1\le m\le (\fl{(n+1/2)/\alpha}+1)-1
\\
&\Longleftrightarrow
(n-1/2)/\alpha+1< m+1< (n+1/2)/\alpha+1
\\
&\Longleftrightarrow
(n-1/2)< m\alpha< (n+1/2)
\\
&\Longleftrightarrow
n< m\alpha+1/2< n+1
\\
&\Longleftrightarrow
\fl{m\alpha+1/2} =n.
\end{align*}
(iii)
Suppose $m = a(n)$ and $m' = a(n+1)$ are successive elements in $\Wa$. We have

\begin{align*}
    m'-m &= a(n+1)-a(n)=\Fl{\frac{n+1-1/2}{\alpha}}-\Fl{\frac{n-1/2}{\alpha}}\\
    &=\Fl{\frac1{\alpha}} \text{ or } \Cl{\frac1{\alpha}}.
\end{align*}
Hence, $m'$ must be equal to $m+k$ or $m+k+1$, where $k = \fl{1/\alpha}$. 
Note that
\[
k\alpha=\left(\frac{1}{\alpha}-\Fp{\frac{1}{\alpha}}\right)\alpha
=1-\Fp{\frac1{\alpha}}\alpha.
\]
Thus, using the results of part (i) we have
\begin{align*}
m\in\Wa\text{ and } m'=m+k&\Longleftrightarrow
m\in\Wa\text{ and } m+k\in\Wa
\\
&\Longleftrightarrow
\fp{m\alpha+1/2}<\alpha\text{ and } 
\fp{m\alpha+1/2+k\alpha}<\alpha
\\
&\Longleftrightarrow
\fp{m\alpha+1/2}<\alpha\text{ and } 
\fp{m\alpha+1/2-\fp{1/\alpha}\alpha}<\alpha
\\
&\Longleftrightarrow
\fp{1/\alpha}\alpha<\fp{m\alpha+1/2}<\alpha.
\end{align*}
This proves the equivalence \eqref{eq:gap-criterion-1} in (iii). Observe that, by (i), $m\in\Wa$ is equivalent to
$\fp{m\alpha+1/2}\in(0,\alpha)$. The 
equivalence \eqref{eq:gap-criterion-2} in (iii) then follows from the fact that $m'$ is either $m+k$ or  $m+k+1$.
\end{proof}

\section{Proof of Proposition \ref{prop:alg-equivalent}}
\label{sec:proof-algos-equiv}
We assume $\alpha,\beta,\gamma$ satisfying condition \eqref{eq:abc-condition}. For any $m\in\NN$, define $u_m,v_m$ as in \eqref{def:uv} (i.e., $u_m = \fp{m\alpha+1/2}$ and $v_m = \fp{m\beta+1/2}$).

\begin{proof}[Proof of Proposition  \ref{prop:alg-equivalent}]
We will prove that Algorithm \ref{alg:1} is equivalent to Algorithm \ref{alg:2}. By Lemma \ref{lem:general-lemmas}(i) and the definition of $u_m$ we have
 \begin{equation*}
     m \in \Wa \Longleftrightarrow u_m < \alpha.
 \end{equation*}
  Thus, the sequence $\Wat$ defined in Algorithm \ref{alg:2} is the Webster sequence $\Wa$. Moreover, by definition, the sequence $\Wat$ in Algorithm \ref{alg:1} is also the Webster sequence $\Wa$. Therefore, the sequences $\Wat$ generated by the two algorithms are the same. Also, in both algorithms, the third sequence, $\Wct$, is defined as $\NN \setminus (\Wat\cup\Wbt)$. Therefore, it remains to show that the sequences $\Wbt$ constructed by the two algorithms are the same.
 
 Note that, if $m = b(n)$, we have $m \in \Wb$ and therefore, by Lemma \ref{lem:general-lemmas}(i), we necessarily have $v_m < \beta$. Thus, in the first case in \eqref{eq:thm-bt-def}, we can replace the condition ``$u_m > \alpha$" by ``$u_m > \alpha \text{ and } v_m < \beta$", and setting $\widetilde{m} = \bt(n)$, we can write the three cases in \eqref{eq:thm-bt-def} as
\begin{equation}
    \label{eqproof:assign-Wbt}
    \begin{cases}
        \text{(I)} &u_{\widetilde{m}} > \alpha \text{ and }v_{\widetilde{m}}<\beta,\\
    \text{(II)} & u_{\widetilde{m}+1} < \alpha \text{ and }\beta/2 < v_{\widetilde{m}+1}<\beta,\\
      \text{(III)} &u_{\widetilde{m}-1} < \alpha  \text{ and } v_{\widetilde{m}-1}<\beta/2.
    \end{cases}
\end{equation}
Using the elementary relation
\begin{equation*}
  u_{\widetilde{m}+1} = \fl{(\widetilde{m}+1)\alpha+1/2} = \fl{\widetilde{m}\alpha+1/2+\alpha} =
 \begin{cases}
    u_{\widetilde{m}} +\alpha -1 \quad &\text{if $u_{\widetilde{m}} > 1-\alpha$};\\
      u_{\widetilde{m}} +\alpha \quad &\text{if $u_{\widetilde{m}} < 1-\alpha$},
\end{cases}
\end{equation*}
and a similar relation  between $u_{\widetilde{m}-1}$ and $u_{\widetilde{m}}$, one can check that the cases (I), (II), and (III) in \eqref{eqproof:assign-Wbt} are equivalent, respectively, to the three cases for $\widetilde{m}\in\Wbt$ in Algorithm \ref{alg:2}, i.e., to
\begin{equation*}
\begin{cases}
    \text{(I')} &u_{\widetilde{m}} > \alpha \text{ and }v_{\widetilde{m}}<\beta,\\
    \text{(II')} & u_{\widetilde{m}} >1-\alpha \text{ and }v_{\widetilde{m}}>1-\beta/2,\\
     \text{(III')} & \alpha<u_{\widetilde{m}} <2\alpha \text{ and }\beta<v_{\widetilde{m}}<3\beta/2.
    \end{cases}
\end{equation*}
 The equivalence between Algorithm \ref{alg:1} and Algorithm \ref{alg:2} follows.
\end{proof}

\section{Proof of Proposition \ref{prop:countingError}}
\label{sec:proof-counting-prop}
 We fix real numbers $\alpha,\beta, \gamma$ that satisfy condition $\eqref{eq:abc-condition}$ and assume $\Wa, \Wbt, \Wct$ are the three sequences generated by Algorithm \ref{alg:1} that give a partition of $\NN$ . 

Let $u_m,v_m$ be defined by \eqref{def:uv}, (i.e., $u_m = \fp{m\alpha+1/2}$ and $v_m = \fp{m\beta+1/2}$) and define $w_m$ analogously with respect to $\gamma$, i.e.,
\begin{equation}
\label{def:w}
    w_m = \fp{m\gamma+1/2}.
\end{equation}
Note that
\begin{align}
    \label{eq:w-further}
    w_m &= \fp{m(1-\alpha-\beta)+1/2}\\\notag
    &=  \fp{-m(\alpha+\beta)+1/2}\\\notag
     &=  1 - \fp{m(\alpha+\beta)-1/2}\\\notag
     &= 1-\fp{u_m+v_m-1/2}.
\end{align}

We define
\begin{align}
    \label{def:ebt}
    &\Ebt(m) = \Wbt(m)-\Wb(m),\\
    \label{def:ect}
    &\Ect(m) = \Wct(m)-\Wc(m).
\end{align}
Then Proposition \ref{prop:countingError} follows from Propositions \ref{prop:error-counting-b} and \ref{prop:error-counting-c} below, which show that $\Ebt(m), \Ect(m) \in \{-1,0,1\}$ and characterize the values of $\Ebt(m)$ and $\Ect(m)$ in terms of the numbers $u_m$ and $v_m$.
\begin{prop}
\label{prop:error-counting-b}
For any $ m \in \mathbb{N}$, $\Ebt(m) \in \{1, -1, 0\}$ and
\begin{equation}
\label{eq:error-counting-b}
\Ebt(m)= 
\begin{cases}
1 &\text{if $u_m > 1-\alpha$ and  $v_m > 1-\beta/2$};\\
-1 &\text{if $u_m < \alpha$ and $v_m < \beta/2$};\\
0 &\text{otherwise}.
\end{cases}
\end{equation}
\end{prop}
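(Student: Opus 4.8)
The plan is to compute the counting function $\Wbt(n)$ directly from the definition of $\bt$ in Algorithm~\ref{alg:1} and subtract $\Wb(n)=\fl{n\beta+1/2}$ (Lemma~\ref{lem:general-lemmas}(ii)). Since $\Wbt$ is a genuine subset of $\NN$, being one part of the partition furnished by Proposition~\ref{prop:alg-equivalent}, its counting function satisfies $\Wbt(n)=\#\{k\in\NN:\bt(k)\le n\}$, and therefore
\[
\Ebt(n)=\Wbt(n)-\Wb(n)=\sum_{k\in\NN}\bigl(\mathbf{1}[\bt(k)\le n]-\mathbf{1}[b(k)\le n]\bigr),
\]
a sum with only finitely many nonzero terms. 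For each $k$ put $\varepsilon_k=\bt(k)-b(k)$. Because $b(k)\in\Wb$, Lemma~\ref{lem:general-lemmas}(i) gives $v_{b(k)}=\fp{b(k)\beta+1/2}<\beta$ automatically, so \eqref{eq:thm-bt-def} reduces to: $\varepsilon_k=0$ if $u_{b(k)}>\alpha$; $\varepsilon_k=-1$ if $u_{b(k)}<\alpha$ and $v_{b(k)}>\beta/2$; and $\varepsilon_k=+1$ if $u_{b(k)}<\alpha$ and $v_{b(k)}<\beta/2$.

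Next I would carry out the cancellation. The terms with $\varepsilon_k=0$ vanish; for $\varepsilon_k=-1$ one has $\mathbf{1}[b(k)-1\le n]-\mathbf{1}[b(k)\le n]=\mathbf{1}[b(k)=n+1]$, and for $\varepsilon_k=+1$ one has $\mathbf{1}[b(k)+1\le n]-\mathbf{1}[b(k)\le n]=-\mathbf{1}[b(k)=n]$. Since $b$ is injective and $m\in\Wb\iff v_m<\beta$ (Lemma~\ref{lem:general-lemmas}(i)), collecting the surviving contributions gives
\[
\Ebt(n)=\mathbf{1}\bigl[\,u_{n+1}<\alpha\ \text{and}\ \beta/2<v_{n+1}<\beta\,\bigr]-\mathbf{1}\bigl[\,u_n<\alpha\ \text{and}\ v_n<\beta/2\,\bigr],
\]
where in the first bracket the constraint $v_{n+1}<\beta$ records that $n+1\in\Wb$, and in the second bracket $v_n<\beta/2$ already forces $n\in\Wb$.

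It then remains to re-express the first indicator in terms of $u_n$ and $v_n$. By Lemma~\ref{lem:uP1-uM1}, $u_{n+1}<\alpha\iff u_n>1-\alpha$ and $v_{n+1}<\beta\iff v_n>1-\beta$; and by Lemma~\ref{lem:uv-P1M1}, in the case $v_n>1-\beta$ one has $v_{n+1}=v_n+\beta-1$, whence $v_{n+1}>\beta/2\iff v_n>1-\beta/2$ (and $v_n>1-\beta/2$ already implies $v_n>1-\beta$, so the constraint $v_{n+1}<\beta$ is automatic). Thus the first indicator equals $\mathbf{1}[u_n>1-\alpha\ \text{and}\ v_n>1-\beta/2]$, and
\[
\Ebt(n)=\mathbf{1}\bigl[\,u_n>1-\alpha\ \text{and}\ v_n>1-\beta/2\,\bigr]-\mathbf{1}\bigl[\,u_n<\alpha\ \text{and}\ v_n<\beta/2\,\bigr].
\]
Since $\alpha<1/2$ by \eqref{eq:ab-less-half}, the conditions $u_n>1-\alpha$ and $u_n<\alpha$ are mutually exclusive, so at most one of the two indicators equals $1$; hence $\Ebt(n)\in\{-1,0,1\}$ and the last display is exactly \eqref{eq:error-counting-b}.

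I expect the only delicate points to be bookkeeping ones. First, one must justify $\Wbt(n)=\#\{k:\bt(k)\le n\}$, i.e.\ that the $\pm1$ perturbations create no collisions and $\bt$ is strictly increasing; this is part of the partition property of Proposition~\ref{prop:alg-equivalent}, which itself rests on $\Wb$ having all gaps at least $2$ (since $\beta<1/2$ forces $\fl{1/\beta}\ge 2$ in Lemma~\ref{lem:general-lemmas}(iii)). Second, when translating the condition on $v_{n+1}$ back to one on $v_n$ one must invoke the correct branch of Lemma~\ref{lem:uv-P1M1} (the case $v_n>1-\beta$); using the other branch would produce a spurious shift. Everything else reduces to the elementary identities recorded above.
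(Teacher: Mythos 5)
Your proof is correct and follows essentially the same route as the paper's: both reduce \eqref{eq:thm-bt-def} using the fact that $v_{b(k)}<\beta$ automatically, observe that a $\pm1$ perturbation of $b(k)$ changes the counting function at $n$ only when $b(k)=n$ (pushed up) or $b(k)=n+1$ (pulled down), and then translate the resulting conditions on $(u_{n+1},v_{n+1})$ back to $(u_n,v_n)$ via Lemmas \ref{lem:uv-P1M1} and \ref{lem:uP1-uM1}; your telescoping-indicator bookkeeping merely replaces the paper's case analysis on whether $m-1$, $m$, or $m+1$ lies in $\Wb$. The point you flag --- that $\seq{\bt(k)}$ is strictly increasing, so that $\Wbt(n)=\#\{k:\bt(k)\le n\}$ --- is likewise left implicit in the paper's own proof (it follows from the gap bound for $\Wb$ together with the disjointness of the three membership conditions in Algorithm \ref{alg:2}), so it is not a gap specific to your argument.
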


\begin{prop}
\label{prop:error-counting-c}
For any $ m \in \mathbb{N}$, $\Ect(m) \in \{1, -1, 0\}$ and
\begin{equation}
\label{eq:error-counting-c}
\Ect(m)= 
\begin{cases}
1 &\text{if $3/2 < u_m+v_m <2$ and ($ u_m < 1-\alpha$ or $v_m < 1-\beta/2$)};\\
-1 &\text{if  $0 < u_m+v_m < 1/2$ and ( $u_m > \alpha$ or $v_m > \beta/2$)};\\
0 &\text{otherwise}.
\end{cases}
\end{equation}
\end{prop}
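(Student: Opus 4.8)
\textbf{Proof proposal for Proposition \ref{prop:error-counting-c}.}

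The plan is to reduce the statement for $\Ect(n)$ to the already-established formula for $\Ebt(n)$ in Proposition \ref{prop:error-counting-b} by exploiting the partition identity. Since $\Wa, \Wbt, \Wct$ partition $\NN$, their counting functions satisfy $\Wa(n)+\Wbt(n)+\Wct(n) = n$ for all $n$. Combining this with the counting-function formula from Lemma \ref{lem:general-lemmas}(ii), namely $\Wa(n)=\fl{u_n + n\alpha} $ rewritten as $\fl{n\alpha+1/2}$ and likewise $\Wb(n)=\fl{n\beta+1/2}$, $\Wc(n)=\fl{n\gamma+1/2}$, we obtain
\begin{equation*}
\Ect(n) = \Wct(n)-\Wc(n) = \big(n - \Wa(n) - \Wbt(n)\big) - \Wc(n) = \big(n - \fl{n\alpha+1/2} - \fl{n\gamma+1/2}\big) - \Wbt(n).
\end{equation*}
Writing $\Wbt(n) = \Wb(n)+\Ebt(n) = \fl{n\beta+1/2}+\Ebt(n)$, this becomes $\Ect(n) = \big(n - \fl{n\alpha+1/2}-\fl{n\beta+1/2}-\fl{n\gamma+1/2}\big) - \Ebt(n)$. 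So the first step is to compute the purely arithmetic quantity $S(n) := n - \fl{n\alpha+1/2}-\fl{n\beta+1/2}-\fl{n\gamma+1/2}$ in terms of $u_n, v_n$ (and $w_n$, which by \eqref{eq:w-further} equals $1-\fp{u_n+v_n-1/2}$). Using $\fl{x+1/2} = x+1/2 - \fp{x+1/2}$ together with $n\alpha+n\beta+n\gamma = n$, one gets $S(n) = -3/2 + u_n + v_n + w_n$, and substituting $w_n = 1 - \fp{u_n+v_n-1/2}$ gives a function of $(u_n,v_n)$ alone that is piecewise constant: one checks by splitting on which of the intervals $(0,1/2), (1/2,3/2), (3/2,2)$ the sum $u_n+v_n$ lies in (equivalently, evaluating $\fp{u_n+v_n-1/2}$), that $S(n) \in \{-1, 0, 1\}$, with $S(n)=-1$ iff $u_n+v_n < 1/2$, $S(n)=1$ iff $u_n+v_n>3/2$, and $S(n)=0$ otherwise. (The boundary values $u_n+v_n \in \{1/2,3/2\}$ and the individual boundaries for $u_n,v_n$ are excluded by \eqref{eq:uvequality-impossible}.)

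The second step is to combine $\Ect(n) = S(n) - \Ebt(n)$ with the explicit three-case formula \eqref{eq:error-counting-b} for $\Ebt(n)$ and do a careful case analysis on the region of the unit square containing $(u_n, v_n)$. The key geometric observation is containment: the region $\{u_n > 1-\alpha,\ v_n > 1-\beta/2\}$ where $\Ebt(n)=1$ lies entirely inside $\{u_n+v_n > (1-\alpha)+(1-\beta/2)\} \subset \{u_n+v_n > 3/2\}$, using $\alpha + \beta/2 < 1/2$ from \eqref{eqproof:a+b/2<1/2}; similarly $\{u_n<\alpha,\ v_n<\beta/2\}$ lies inside $\{u_n+v_n<1/2\}$. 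Hence on the $\Ebt(n)=1$ region we have $S(n)=1$, so $\Ect(n)=1-1=0$; on the $\Ebt(n)=-1$ region, $S(n)=-1$, so $\Ect(n)=-1-(-1)=0$; and on the complement where $\Ebt(n)=0$, we have $\Ect(n)=S(n)$. Thus $\Ect(n) = 1$ exactly when $\Ebt(n)=0$ and $u_n+v_n>3/2$, which, after removing the sub-region $\{u_n>1-\alpha,\ v_n>1-\beta/2\}$ from $\{u_n+v_n>3/2\}$, is precisely the condition ``$3/2 < u_n+v_n < 2$ and ($u_n<1-\alpha$ or $v_n<1-\beta/2$)''; symmetrically for $\Ect(n)=-1$. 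This matches \eqref{eq:error-counting-c}, and in particular $\Ect(n)\in\{-1,0,1\}$.

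I expect the main obstacle to be bookkeeping rather than conceptual: one must verify that the ``otherwise'' clauses in \eqref{eq:error-counting-b} and \eqref{eq:error-counting-c} are genuinely complementary after the containments are accounted for, i.e., that no $(u_n,v_n)$ slips through with, say, $u_n+v_n>3/2$, $\Ebt(n)=0$, but the De Morgan rewriting of the negation of ``$u_n>1-\alpha$ and $v_n>1-\beta/2$'' failing — this is where \eqref{eq:uvequality-impossible} is needed to discard the measure-zero boundary cases and where \eqref{eqproof:a+b/2<1/2} is needed to ensure the two small $\Ebt$-regions sit strictly inside the corner triangles $\{u_n+v_n>3/2\}$ and $\{u_n+v_n<1/2\}$. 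A secondary point to handle with care is that $\Ect(n)$ a priori could be any integer; it is the identity $\Ect(n)=S(n)-\Ebt(n)$ with both terms bounded in $\{-1,0,1\}$ and their supports arranged so that the extreme values never add, that forces $\Ect(n)\in\{-1,0,1\}$.
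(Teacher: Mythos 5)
Your proposal is correct and takes essentially the same route as the paper: your identity $\Ect(n)=S(n)-\Ebt(n)$, derived from the partition property and the counting formula $\Wa(n)=\fl{n\alpha+1/2}$, is exactly the paper's relation \eqref{eqproof:E=delta-delta-E}, with your $S(n)$ equal to $\delta(u_n,v_n)-\delta(\fp{u_n+v_n-1/2},1/2)$ from Lemma \ref{lem:three-counting-sum}, and the same inequality \eqref{eqproof:a+b/2<1/2} rules out the extreme values. The only difference is organizational (and a harmless misstatement of Lemma \ref{lem:general-lemmas}(ii) before you restate it correctly): you handle the case analysis by noting the $\Ebt(n)=\pm1$ corner regions are contained in $\{u_n+v_n>3/2\}$ and $\{u_n+v_n<1/2\}$, where the paper enumerates the triples $(\delta_1,\delta_2,\Ebt(n))$.
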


\subsection{Proof of Proposition \ref{prop:error-counting-b}
}
By Lemma \ref{lem:general-lemmas}(i),  \eqref{eq:thm-bt-def} is equivalent to
\begin{equation}
\label{eq:thm-bt-def-equiv}
\bt(n)=\begin{cases}
b(n)
&\text{if $b(n) \in \Wb \setminus\Wa$;}
\\
b(n)-1 &\text{if $b(n) \in \Wa\cap\Wb$ and $v_{b(n)} > \beta/2$;}
\\
b(n)+1 &\text{if $b(n) \in \Wa\cap\Wb$ and $v_{b(n)} < \beta/2$}
\end{cases}
\end{equation}
By the construction of $\Wbt$ in \eqref{eq:thm-bt-def}, for all $m$ such that $m+1, m, m-1 \notin \Wb$, we have $\Wbt(m) = \Wb(m)$ and thus $\Ebt(m) = 0$. Therefore, it suffices to consider the following cases:
\[
(I)\ m+1 \in \Wb, \quad (II)\ m \in \Wb,\quad (III)\ m-1 \in \Wb.
\]

\textit{Case I: $m+1 \in \Wb$.} Then $m+1 = b(n)$ for some $n$. By Lemma \ref{lem:general-lemmas}(iii), the gap between any two successive elements in $\Wb$ is at least 2, since $\beta < 1/2$. If $b(n) \notin \Wa$, then $\bt(n) = b(n)$ by \eqref{eq:thm-bt-def-equiv}. Therefore, $\Wbt(m) = \Wb(m) = n-1$ and thus $\Ebt(m) = 0$. If $b(n) \in \Wa$, then $\Ebt(m)$ is nonzero only if $\bt(n) = b(n)-1$, in which case $\Ebt(m) = \Wbt(m) - \Wb(m) = n - (n-1) = 1$. By \eqref{eq:thm-bt-def-equiv}, $\bt(n) = b(n)-1$ if and only if 
\[
m+1 \in \Wa\cap\Wb\text{ and } v_{m+1} > \frac{\beta}{2},
\]
which, by Lemmas \ref{lem:general-lemmas}(i), is equivalent to 
\[
u_m > 1-\alpha \text{ and } v_m > 1-\frac{\beta}{2}.
\]

\textit{Case II: $m \in \Wb$.} Then $m = b(n)$ for some $n$. Similarly as in Case I, $\Ebt(m)$ is nonzero only if $\bt(n) = b(n)+1$, in which case $\Ebt(m) = -1$. By \eqref{eq:thm-bt-def-equiv}, $\bt(n) = b(n)+1$ holds if and only if 
\[
m \in \Wa\cap\Wb\text{ and }v_{m} < \frac{\beta}{2},
\]
which is equivalent to 
\[
u_m < \alpha \text{ and } v_{m} < \frac{\beta}{2}.
\]

\textit{Case III: $m-1 \in \Wb$.} Then $m-1 = b(n)$ for some $n$. By Cases I and II, we can assume $m+1 \notin \Wb $ and $m \notin \Wb$. Then $b(n-1) \leq m-2$ and $b(n+1) \geq m+2$, so $\bt(n-1) \leq b(n-1) + 1 \leq m-1$ and $\bt(n+1) \geq b(n+1) - 1 \geq m+1$. Therefore $\Wbt(m) = n = \Wb(m)$ and hence $\Ebt(m) = 0$.

 Combining the three cases gives \eqref{eq:error-counting-b} and thus $\Ebt(m) \in \{1, -1, 0\}$ for any $ m \in \mathbb{N}$. \qed

\subsection{Proof of Proposition \ref{prop:error-counting-c}
}
\label{subsec:proof-prop-c-sec5}

\begin{lem}
\label{lem:three-counting-sum}
We have, for any $m \in \NN,$
\begin{equation*}
\label{eq:three-counting-sum}
    W_\alpha(m) + W_\beta(m) + W_\gamma(m) = m - \fl{u_m+v_m-1/2}.
\end{equation*}

\begin{proof}
Using equation \eqref{eq:w-further}, we get
\begin{align*}
     W_\alpha(m) + &W_\beta(m) + W_\gamma(m)\\
    &= \fl{m\alpha + 1/2} + \fl{m\beta + 1/2} + \fl{m\gamma + 1/2}\\
    &= m + 3/2 - u_m - v_m - w_m\\
    &= m + 3/2 - u_m - v_m - (1-\fp{u_m+v_m-1/2})\\
     &= m - \fl{u_m+v_m-1/2}. \qedhere
\end{align*}
\end{proof}
\end{lem}

\begin{proof}[Proof of Proposition \ref{prop:error-counting-c}]
From the construction of the algorithms, it is clearly that $\Wa, \Wbt, \Wct$ partition $\NN$. Therefore, for any $m\in\NN$, we have
\begin{equation}
    \label{eqproof:Wct-original}
     \Wct(m)=m-\Wa(m)-\Wbt(m)=m-\Wa(m)-\Wb(m)-\Ebt(m).
\end{equation}
By Lemma \ref{lem:three-counting-sum}, 
\begin{equation}
    \label{eqproof:Wc-original}
    W_\gamma(m) = m -W_\alpha(m)- W_\beta(m)
     - \fl{u_m+v_m-1/2}.
\end{equation}
Subtract \eqref{eqproof:Wc-original} from \eqref{eqproof:Wct-original} to get
\begin{equation}
    \label{eqproof:E=delta-delta-E}
    \Ect(m)=\Wct(m)-\Wc(m)= \fl{u_m+v_m-1/2} -\Ebt(m).
\end{equation}

We obtain, by Proposition \ref{prop:error-counting-b},
\begin{align}
\label{eqproof:delta2}
   \Ect(m) 
    &=\begin{cases}
    \text{$\fl{u_m+v_m-3/2}$\quad if $u_m > 1-\alpha$ and  $v_m > 1-\beta/2$,}\\
     \text{$\fl{u_m+v_m+1/2}$\quad if $u_m < \alpha$ and $v_m < \beta/2$,}\\
     \text{$\fl{u_m+v_m-1/2}$\quad otherwise.}
    \end{cases}
\end{align}
Note that $\alpha+\beta/2 < (\alpha+\beta+\gamma)/2 = 1/2$. Therefore, 
\begin{align}
\notag
\begin{cases}
    \text{if $u_m > 1-\alpha$ and  $v_m > 1-\beta/2$, then  $\Ect(m) = 0$;}\\
    \text{if $u_m < \alpha$ and $v_m < \beta/2$, then  $\Ect(m) = 0$;}\\
    \text{if $u_m+v_m < 1/2$ and ($u_m > \alpha$ or $v_m > \beta/2$), then  $\Ect(m) = -1$;}\\
    \text{if $u_m+v_m > 3/2$ and ($u_m < 1-\alpha$ or $v_m < 1-\beta/2$), then  $\Ect(m) = 1$;}\\
    \text{otherwise, $\Ect(m) = 0$.}\\
    \end{cases}
\end{align}
Thus,  Proposition \ref{prop:error-counting-c} holds.
\end{proof}

\section{Proof of Theorem
\ref{thm:main}}
\label{sec:proof-main-thm}
 We fix real numbers $\alpha,\beta, \gamma$ that satisfy condition $\eqref{eq:abc-condition}$ and assume $\at(n), \bt(n), \ct(n)$ are the three sequences generated by Algorithm \ref{alg:1} that give a partition of $\NN$.
 
 By construction, the sequences $\at(n)$ and $\bt(n)$ satisfy the conditions \eqref{eq:mainThm-bounds-a} and \eqref{eq:mainThm-bounds-b} of Theorem \ref{thm:main}. Thus it remains to show that $\ct(n)$ satisfies \eqref{eq:mainThm-bounds-c}, i.e., $\ct(n)-c(n) \in \{-1,0,1\}$. This follows from Proposition \ref{prop:error-diff-c} below, which gives the desired error bounds for $\ct(n)$ and also provides a complete characterization for the errors $-1, 0, \text{ and } 1$, respectively.


\begin{prop}
\label{prop:error-diff-c}
Given $n\in\mathbb{N}$, set $m = c(n), \widetilde{m}=\ct(n)$. Then $m-\widetilde{m} \in \{-1, 0, 1\}$ and
\begin{equation}
\label{eq:error-diff-c}
m-\widetilde{m}=
\begin{cases}
-1 &\text{if $u_{\widetilde{m}} > \alpha$ and $v_{\widetilde{m}} > \beta$ and ($u_{\widetilde{m}} >2\alpha$ or $v_{\widetilde{m}} > 3\beta/2$)}\\
&\text{and $u_{\widetilde{m}}+v_{\widetilde{m}} < 3/2-\gamma$};\\
0&\text{if $u_{\widetilde{m}}>\alpha$ and $v_{\widetilde{m}}>\beta$ and ($3/2-\gamma < u_{\widetilde{m}}+v_{\widetilde{m}} < 3/2$)};\\
1&\text{if $3/2 < u_{\widetilde{m}}+v_{\widetilde{m}} <2$ and ($u_{\widetilde{m}} < 1-\alpha$ or $v_{\widetilde{m}} < 1-\beta/2$)}.
\end{cases}
\end{equation}
\end{prop}
The distribution of $m-\widetilde{m}$, given by \eqref{eq:error-diff-c}, in terms of the pair $(u_{\widetilde{m}}, v_{\widetilde{m}})$, is illustrated in the example in Figure \ref{fig:error-c} (for the case when $\gamma > 1/2$).
  \begin{figure}[H]
    \centering
        \centering
        \includegraphics[width=0.65\linewidth]{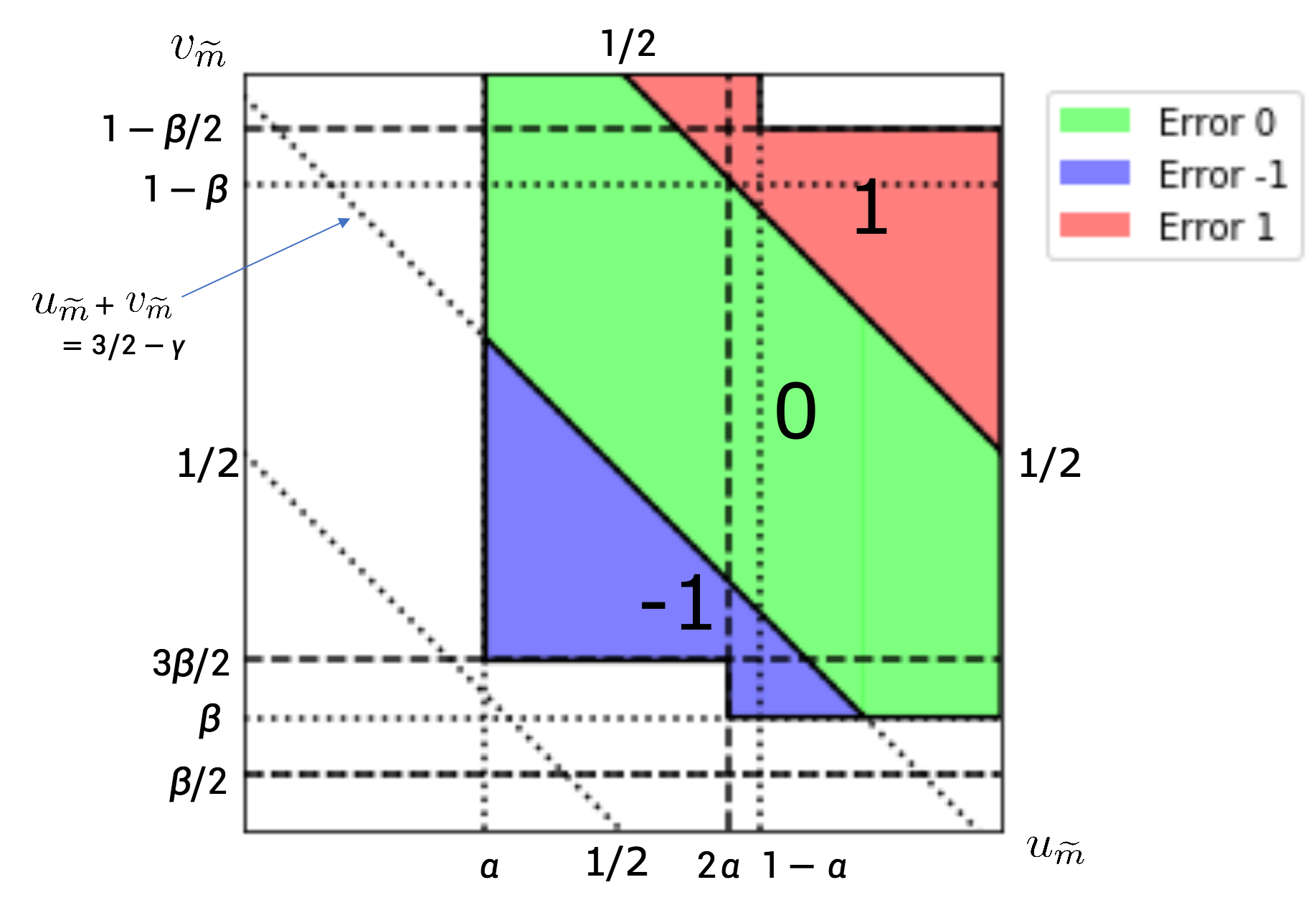}
    \caption{Distribution of $m-\widetilde{m}$  in terms of $(u_{\widetilde{m}}, v_{\widetilde{m}})$.}
    \label{fig:error-c}
\end{figure}
Our proof will show that the pairs $(u_{\widetilde{m}}, v_{\widetilde{m}})$ necessarily satisfy one of the three conditions in \eqref{eq:error-diff-c}.

The remainder of this section is devoted to the proof of Proposition \ref{prop:error-diff-c}. In Section \ref{subsec:proof-prop-c-sec6-lemmas} we prove some auxiliary lemmas and in Section \ref{subsec:proof-prop-c-sec6} we complete the proof of Proposition \ref{prop:error-diff-c}.


\subsection{Lemmas}
\label{subsec:proof-prop-c-sec6-lemmas}

\begin{lem}
\label{lem:inWct}
For any $\widetilde{m}\in\NN$, 
\begin{align}
\label{eq:inWct}
     \widetilde{m}\in \Wct &\Longleftrightarrow  u_{\widetilde{m}} > \alpha \text{ and } v_{\widetilde{m}} > \beta\\
     \notag
    &\qquad \text{ and } (u_{\widetilde{m}} < 1-\alpha \text{ or }v_{\widetilde{m}} < 1-\beta/2) \\
    \notag
   &\qquad \text{ and } (u_{\widetilde{m}} > 2\alpha \text{ or } v_{\widetilde{m}} > 3\beta/2).
\end{align}
\end{lem}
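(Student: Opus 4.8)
The plan is to compute membership in $\Wct$ directly from Algorithm \ref{alg:2}, which, by Proposition \ref{prop:alg-equivalent}, generates the same partition as Algorithm \ref{alg:1}. Since $\Wct=\NN\setminus(\Wat\cup\Wbt)$ and the conditions defining $\Wat$ and $\Wbt$ in \eqref{eq:alg2-m} are stated purely in terms of $u_n$ and $v_n$, the membership $n\in\Wct$ is equivalent to the negation of the $\Wat$-condition conjoined with the negation of the $\Wbt$-condition. Carrying out this negation and simplifying the resulting Boolean combination will give exactly \eqref{eq:inWct}. The one fact that makes the simplification clean is \eqref{eq:uvequality-impossible}: the values $\alpha,2\alpha,1-\alpha$ all lie in $\alpha\QQ+\ZZ$ and the values $\beta/2,\beta,3\beta/2,1-\beta/2$ all lie in $\beta\QQ+\ZZ$, so $u_n$ and $v_n$ avoid all of these thresholds, and hence negating a strict inequality such as ``$v_n<\beta$'' yields the strict inequality ``$v_n>\beta$''.

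Concretely, the first step is to negate ``$u_n<\alpha$'', which gives the standing hypothesis $u_n>\alpha$. The second step is to negate the $\Wbt$-condition under this hypothesis. The first disjunct of that condition, ``$u_n>\alpha$ and $v_n<\beta$'', collapses to ``$v_n<\beta$'', whose negation is ``$v_n>\beta$''. The second disjunct, ``$u_n>1-\alpha$ and $v_n>1-\beta/2$'', negates by De Morgan to ``$u_n<1-\alpha$ or $v_n<1-\beta/2$''. The third disjunct, ``$\alpha<u_n<2\alpha$ and $\beta<v_n<3\beta/2$'', negates to ``$u_n<\alpha$ or $u_n>2\alpha$ or $v_n<\beta$ or $v_n>3\beta/2$''; since $u_n>\alpha$ and $v_n>\beta$ are already in force, this reduces to ``$u_n>2\alpha$ or $v_n>3\beta/2$''. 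Conjoining $u_n>\alpha$, $v_n>\beta$, and the two surviving disjunctions reproduces \eqref{eq:inWct}.

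I do not expect a genuine obstacle here; the argument is a short exercise in Boolean manipulation. The only points that require care are (a) invoking \eqref{eq:uvequality-impossible} to rule out every boundary equality that appears, so that each negated inequality stays strict, and (b) noticing that the negation of the third disjunct simplifies to the stated form only after the conditions $u_n>\alpha$ and $v_n>\beta$ coming from the earlier steps have been imposed; applying De Morgan naively without this bookkeeping would leave redundant clauses. One could alternatively derive the lemma starting from the piecewise definition of $\bt(n)$ in Algorithm \ref{alg:1} together with Lemmas \ref{lem:uv-P1M1} and \ref{lem:uP1-uM1}, but routing through Algorithm \ref{alg:2} avoids the shifts $n\mapsto n\pm 1$ and is the cleaner path.
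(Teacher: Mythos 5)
Your proposal is correct and follows essentially the same route as the paper: the paper's proof of Lemma \ref{lem:inWct} likewise invokes Proposition \ref{prop:alg-equivalent} to read off membership from Algorithm \ref{alg:2}, negates the $\Wat$- and $\Wbt$-conditions, and simplifies the resulting Boolean combination using the standing conjuncts $u_n>\alpha$ and $v_n>\beta$. Your extra remark about \eqref{eq:uvequality-impossible} guaranteeing that all boundary equalities are excluded is a sound (if implicit in the paper) justification for keeping the negated inequalities strict.
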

\begin{proof}
By the construction of Algorithm \ref{alg:2} and Proposition \ref{prop:alg-equivalent}, we have
\begin{align*}
    \widetilde{m} \in \Wat &\Longleftrightarrow u_{\widetilde{m}} <\alpha,\\
    \widetilde{m} \in \Wbt &\Longleftrightarrow u_{\widetilde{m}} >\alpha \text{ and } v_{\widetilde{m}} < \beta\\
    &\qquad \text{ or ($u_{\widetilde{m}} > 1-\alpha$ and $v_{\widetilde{m}} > 1-\beta/2$)} \\
   &\qquad \text{ or ($\alpha< u_{\widetilde{m}} < 2\alpha$ and $\beta< v_{\widetilde{m}}< 3\beta/2$)},\\
    \widetilde{m} \in \Wct &\Longleftrightarrow \widetilde{m} \notin (\Wat \cup \Wbt).
\end{align*}
Therefore,
\begin{align*}
 \widetilde{m}\in \Wct  &\Longleftrightarrow
 \widetilde{m}\not\in \Wa\text{ and } \widetilde{m}\not\in \Wbt\\    &\Longleftrightarrow u_{\widetilde{m}} > \alpha \text{ and }  (u_{\widetilde{m}} <\alpha \text{ or }   v_{\widetilde{m}} > \beta)\\
    &\qquad \text{ and $(u_{\widetilde{m}} < 1-\alpha$ or $v_{\widetilde{m}} < 1-\beta/2$)} \\
   &\qquad \text{ and ($u_{\widetilde{m}} < \alpha$ or $u_{\widetilde{m}} > 2\alpha$ or $v_{\widetilde{m}} < \beta$ or $v_{\widetilde{m}} > 3\beta/2$)}\\
   &\Longleftrightarrow \text{$u_{\widetilde{m}} > \alpha$ and $v_{\widetilde{m}} > \beta$}\\
    &\qquad \text{ and ($u_{\widetilde{m}} < 1-\alpha$ or $v_{\widetilde{m}} < 1-\beta/2$)} \\
    &\qquad \text{ and ($u_{\widetilde{m}} > 2\alpha$ or $v_{\widetilde{m}} > 3\beta/2$)}.
    \qedhere
\end{align*}
\end{proof}

\begin{lem}
\label{lem:c(n-1)<=ctn}
For any $n\in\NN$,
\begin{equation*}
c(n-1) \le \ct(n).
\end{equation*}
\end{lem}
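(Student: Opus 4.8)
\textbf{Proof proposal for Lemma \ref{lem:c(n-1)<=ctn}.}

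The plan is to unwind the definitions of $c(n-1)$ and $\ct(n)$ and reduce the claimed inequality to a concrete statement about counting functions, which can then be checked using the error bounds already established. First I would recall that $c$ is the exact Webster sequence of density $\gamma$, so by Lemma \ref{lem:general-lemmas}(ii) its counting function is $\Wc(k)=\fl{k\gamma+1/2}$, while $\ct$ is the perturbed sequence $\Wct$ with counting function $\Wct(k)=\Wc(k)+\Ect(k)$, where $\Ect(k)\in\{-1,0,1\}$ by Proposition \ref{prop:error-counting-c}. The key observation is that for strictly increasing integer sequences, $c(n-1)\le \ct(n)$ is equivalent to the counting-function inequality $\Wct(m)\ge n$ whenever $m\ge c(n-1)$, and in particular it suffices to show $\Wct\big(c(n-1)\big)\ge n-1+1=n$ is \emph{not} quite what we want — more precisely, $c(n-1)\le\ct(n)$ says that the $n$th element of $\Wct$ is at least as large as the $(n-1)$th element of $\Wc$, which holds iff $\Wct(c(n-1))\le n$, i.e. the number of elements of $\Wct$ that are $\le c(n-1)$ is at most $n$. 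So the real target is the bound
\[
\Wct\big(c(n-1)\big)\le n.
\]

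To prove this, write $m=c(n-1)$, so $\Wc(m)=n-1$ (the $(n-1)$th Webster element of density $\gamma$ has exactly $n-1$ Webster-$\gamma$ elements at or below it). Then $\Wct(m)=\Wc(m)+\Ect(m)=(n-1)+\Ect(m)\le (n-1)+1=n$, using $\Ect(m)\le 1$ from Proposition \ref{prop:error-counting-c}. This gives $\Wct(m)\le n$, hence $\ct(n)\ge m=c(n-1)$, as desired. The argument is short once the dictionary between sequences and counting functions is set up correctly; the only subtlety is getting the direction of the counting-function inequality right (an element $x$ of a sequence $A$ satisfies $a(j)\le x$ iff $A(x)\ge j$, and $a(j)\ge x$ iff $A(x-1)< j$, equivalently $A(x)\le j$ when $x\in A$), so I would state that correspondence explicitly as the first step.

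The main obstacle — really the only place one must be careful — is the boundary bookkeeping in the counting-function translation: one must handle the case $n=1$ (where $c(0)$ should be interpreted via the convention $A(0)=0$, or simply noted to be vacuous since $c(0)$ is not defined and one can start from $n\ge 2$, or define $c(0)=0\le\ct(1)$ trivially), and one must make sure the inequality $\Wc(c(n-1))=n-1$ is applied at the correct index and that $m=c(n-1)$ indeed lies in $\Wc$ so that no off-by-one error creeps in. Everything else follows immediately from Proposition \ref{prop:error-counting-c}, and no use of the finer structure of $\Ect$ (the explicit regions in $(u_m,v_m)$-space) is needed — only the crude bound $\Ect(m)\le 1$.
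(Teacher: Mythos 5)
There is a genuine gap in your counting-function translation, and it sits exactly at the ``only subtlety'' you flagged. Your stated correspondence is that $a(j)\ge x$ is equivalent to $A(x)\le j$ \emph{when $x\in A$}; but you then apply it with $A=\Wct$ and $x=c(n-1)$, while only checking that $c(n-1)$ lies in $\Wc$ — membership in $\Wct$ is what the equivalence needs, and it is not guaranteed. Concretely, the implication ``$\Wct(c(n-1))\le n \Rightarrow c(n-1)\le \ct(n)$'' fails in the scenario $\Wct(c(n-1))=n$ with $c(n-1)\notin\Wct$: then all $n$ elements counted are strictly below $c(n-1)$, so $\ct(n)<c(n-1)$. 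In that scenario one has $\Ect(c(n-1))=n-(n-1)=1$, and with only the crude bound $\Ect(m)\in\{-1,0,1\}$ (which is all you allow yourself) this cannot be excluded. It \emph{can} be excluded by Lemma \ref{lem:Ect-implies-nIn} ($\Ect(m)=1$ forces $m\in\Wct$), but that lemma appears later and rests on the finer $(u_m,v_m)$-region description of Proposition \ref{prop:error-counting-c}, i.e.\ precisely the structure you said you would not use. So as written, the bound $\Wct(c(n-1))\le n$ you prove is correct but does not yield the lemma.

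The repair is a one-step shift, and it is exactly what the paper does (phrased as a contradiction): evaluate the counting function at $c(n-1)-1$ rather than at $c(n-1)$. Since $c(n-1)\in\Wc$, one has $\Wc(c(n-1)-1)=n-2$, hence
\begin{equation*}
\Wct\bigl(c(n-1)-1\bigr)=\Wc\bigl(c(n-1)-1\bigr)+\Ect\bigl(c(n-1)-1\bigr)\le (n-2)+1=n-1,
\end{equation*}
so at most $n-1$ elements of $\Wct$ are strictly below $c(n-1)$, which gives $\ct(n)\ge c(n-1)$ using the correct correspondence $a(j)\ge x \Leftrightarrow A(x-1)\le j-1$ (no membership hypothesis needed). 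With this adjustment your argument matches the paper's proof, and the crude bound $\Ect\le 1$ from Proposition \ref{prop:error-counting-c} indeed suffices; the $n=1$ boundary remark you make is fine.
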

\begin{proof}
We argue by contradiction. Suppose $\ct(n) \le c(n-1)-1$. Recall from Proposition \ref{prop:error-counting-c} that $\Ect(m) \in \{-1,0,1\}$ for any $m$. Then, by the monotonicity of the counting function $\Wct(m)$ and the fact that $\Wc(c(n-1)-1) = (n-1)-1 = n-2$, we have
\begin{align*}
n&=\Wct(\ct(n))\\
&\le \Wct(c(n-1)-1)\\
&=\Wc(c(n-1)-1)+\Ect(c(n-1)-1)\\
&=n-2+ \Ect(c(n-1)-1)\\
&\le n-1,
\end{align*}
which is a contradiction.
\end{proof}

\begin{lem}
\label{lem:ctn<=c(n+1)}
For any $n\in\NN$,
\begin{equation*}
\ct(n)\le c(n+1).
\end{equation*}
\end{lem}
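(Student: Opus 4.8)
The statement to prove is Lemma~\ref{lem:ctn<=c(n+1)}: for any $n\in\NN$, $\ct(n)\le c(n+1)$. The plan is to mimic the proof of Lemma~\ref{lem:c(n-1)<=ctn}, arguing by contradiction and using the monotonicity of the counting function $\Wct$ together with the bound $\Ect(m)\in\{-1,0,1\}$ from Proposition~\ref{prop:error-counting-c}. So suppose for contradiction that $\ct(n)\ge c(n+1)+1$. Then, applying $\Wct$ to both sides and using that $\Wct$ is nondecreasing, we get $n=\Wct(\ct(n))\ge \Wct(c(n+1)+1)$.

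Next I would compute $\Wct(c(n+1)+1)$ from below. Since $c(n+1)\in\Wc$, we have $\Wc(c(n+1))=n+1$, and hence $\Wc(c(n+1)+1)\ge n+1$ (the counting function is nondecreasing). Therefore
\[
\Wct(c(n+1)+1)=\Wc(c(n+1)+1)+\Ect(c(n+1)+1)\ge (n+1)+(-1)=n.
\]
This gives $n\ge \Wct(c(n+1)+1)\ge n$, so both inequalities are equalities, which does not yet produce an outright contradiction — so a little more care is needed than in Lemma~\ref{lem:c(n-1)<=ctn}. The cleaner approach is to strengthen the starting assumption's consequence: if $\ct(n)\ge c(n+1)+1$, then in fact $\ct(n)\ge c(n+1)+1$ forces $\Wct(\ct(n))\ge \Wct(c(n+1)+1)$, but we can also note that $\ct(n)$ is the $n$th element of $\Wct$, so $\Wct(\ct(n)-1)=n-1$. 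Combining with $\ct(n)-1\ge c(n+1)$ gives $n-1=\Wct(\ct(n)-1)\ge \Wct(c(n+1))=\Wc(c(n+1))+\Ect(c(n+1))=(n+1)+\Ect(c(n+1))\ge (n+1)-1=n$, i.e. $n-1\ge n$, a contradiction.

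The main obstacle, such as it is, is getting the bookkeeping right so that the final inequality is strict enough to contradict: one must apply the counting function to $\ct(n)-1$ (where $\Wct$ equals exactly $n-1$) rather than to $\ct(n)$, and pair this with the value $\Wc(c(n+1))=n+1$ at the exact element $c(n+1)$, so that the $+1$ from being at a Webster element of $\Wc$ together with the worst-case $-1$ perturbation from $\Ect$ still leaves a net value of $n$, contradicting $n-1$. No deeper input is required beyond monotonicity of counting functions, the identity $\Wc(c(k))=k$, and the perturbation bound $\Ect\in\{-1,0,1\}$ from Proposition~\ref{prop:error-counting-c}; everything else is routine arithmetic.
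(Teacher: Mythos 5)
Your proof is correct and, after your initial detour, lands on exactly the paper's argument: assume $\ct(n)\ge c(n+1)+1$, evaluate the counting function at $\ct(n)-1$ to get $n-1=\Wct(\ct(n)-1)\ge \Wct(c(n+1))=(n+1)+\Ect(c(n+1))\ge n$, using $\Ect\in\{-1,0,1\}$ from Proposition \ref{prop:error-counting-c}. No gaps; this is the same proof as in the paper.
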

\begin{proof}
We argue by contradiction. Suppose $\ct(n)\ge c(n+1)+1$. Then by the monotonicity of the counting function $\Wct(m)$ and the fact that $\Wc(c(n+1)) = n+1$, we have 
\begin{align*}
n-1&=\Wct(\ct(n)-1)\\
&\ge \Wct(c(n+1))\\
&=\Wc(c(n+1))+\Ect(c(n+1))\\
&= n+1 + \Ect(c(n+1))\\
&\ge n,
\end{align*}
which is a contradiction.
\end{proof}

\begin{lem}
\label{lem:cn-1<=ctn}
For any $n\in\NN$,
\begin{equation*}
c(n)-1\le \ct(n).
\end{equation*}
\end{lem}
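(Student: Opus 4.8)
The plan is to mirror the proof strategy of Lemma~\ref{lem:c(n-1)<=ctn} and Lemma~\ref{lem:ctn<=c(n+1)}, which both proceed by contradiction using the monotonicity of the counting function $\Wct$ together with the bound $\Ect(m)\in\{-1,0,1\}$ from Proposition~\ref{prop:error-counting-c}. The only difference is that here we want a lower bound of the form $c(n)-1\le \ct(n)$ rather than $c(n-1)\le \ct(n)$, which is a slightly sharper statement, so I expect I will need to combine the $\Ect$-bound with the structure of the Webster sequence $\Wc$ (in particular the gap criterion, Lemma~\ref{lem:general-lemmas}(iii)) rather than counting-function monotonicity alone.

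First I would argue by contradiction: suppose $\ct(n)\le c(n)-2$. The goal is to derive a contradiction by evaluating the counting function $\Wct$ at the integer $c(n)-2$ (or at $\ct(n)$). Since $\Wc(c(n)-1)=n-1$ and $\Wc(c(n)-2)=n-1$ as well (because the gap between $c(n-1)$ and $c(n)$ is at least $2$ by Lemma~\ref{lem:general-lemmas}(iii) and $\beta,\gamma<1/2$, so $c(n)-2\ge c(n-1)$ and the two integers $c(n)-1, c(n)-2$ both lie strictly between consecutive elements of $\Wc$ or one of them equals $c(n-1)$), we have $\Wct(c(n)-2)=\Wc(c(n)-2)+\Ect(c(n)-2)\le (n-1)+1=n$. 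On the other hand, $\ct(n)\le c(n)-2$ combined with monotonicity of $\Wct$ gives $n=\Wct(\ct(n))\le \Wct(c(n)-2)\le n$, so equality holds throughout; in particular $\Ect(c(n)-2)=1$ and $\Wct(\ct(n))=\Wct(c(n)-2)=n$, meaning no element of $\Wct$ lies in the interval $(\ct(n), c(n)-2]$.

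Then I would push for a contradiction by a finer analysis near $c(n)-2$. The relation $\Ect(c(n)-2)=1$ forces, via Proposition~\ref{prop:error-counting-c}, that $c(n)-2\in\Wct$ (since $\Wct(c(n)-2)-\Wct(c(n)-3)=\Ect(c(n)-2)-\Ect(c(n)-3)+(\Wc(c(n)-2)-\Wc(c(n)-3))$, and one checks $c(n)-2\in\Wc$ is impossible as $c(n)-2<c(n)$ and $c(n-1)\le c(n)-2$; the precise bookkeeping shows $c(n)-2$ must be a member of $\Wct$). But $c(n)-2\in\Wct$ with $\Wct(c(n)-2)=n$ gives $\ct(n)=c(n)-2$, i.e.\ $c(n)-\ct(n)=2$, contradicting the fact, already established in Proposition~\ref{prop:error-counting-c} (which bounds $\Ect$, hence controls $c(n)-\ct(n)$), that $c(n)-\ct(n)\le 1$ cannot jump to $2$. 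Alternatively, and perhaps more cleanly, once equality holds throughout one observes $\Ect(c(n)-2)=1$ but also, since $c(n)-1\notin\Wc$ is false—$c(n)\in\Wc$—one gets $\Wct(c(n)-1)=\Wc(c(n)-1)+\Ect(c(n)-1)=n-1+\Ect(c(n)-1)$, and combined with $\Wct(c(n)-2)=n$ and monotonicity this forces $\Ect(c(n)-1)\ge 1$ and hence $=1$, but then $\Wct(c(n)-1)=n$ as well, and continuing, $\Wct(c(n))=\Wc(c(n))+\Ect(c(n))=n+\Ect(c(n))\ge \Wct(c(n)-1)=n$ is consistent, so I would instead locate the contradiction at the value $\ct(n)$ itself.

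The main obstacle I anticipate is exactly this last bookkeeping: translating ``$\Wct$ is constant on $[\ct(n),c(n)-2]$ and jumps by one at $c(n)-2$'' into a membership statement that collides with the already-proved error bound $c(n)-\ct(n)\le 1$ (equivalently $\ct(n)\ge c(n)-1$ should follow, but we are trying to prove precisely that, so I must be careful not to be circular). The resolution is that Proposition~\ref{prop:error-counting-c} and the earlier lemmas give $\Ect(m)\in\{-1,0,1\}$ \emph{unconditionally}, and this alone—not the diff-bound for $\ct$, which is what we are building toward in Proposition~\ref{prop:error-diff-c}—suffices: from $\Wct(\ct(n))=n$ and $\Wc(\ct(n))=\Wc(c(n)-2)\le n-1$ (using $\ct(n)\le c(n)-2<c(n)$) we get $\Ect(\ct(n))=\Wct(\ct(n))-\Wc(\ct(n))\ge n-(n-1)=1$. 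But also $\Wc(\ct(n))\ge \Wc(c(n-1))=n-1$ requires $\ct(n)\ge c(n-1)$, which holds by Lemma~\ref{lem:c(n-1)<=ctn}; hence $\Wc(\ct(n))=n-1$ exactly, so $\Ect(\ct(n))=1$, meaning $\Wct(\ct(n))-\Wct(\ct(n)-1)$ contributes the jump. Then $\ct(n)\in\Wct$ and $\ct(n)\notin\Wc$ (as $c(n-1)\le \ct(n)\le c(n)-2$ strictly between consecutive $\Wc$-elements, using the gap $\ge 2$), which is fine; the actual contradiction comes from comparing with $\Wct(c(n)-1)$: since $c(n)-1\ge \ct(n)+1$ under our assumption and $\Wc(c(n)-1)=n-1$, $\Ect(c(n)-1)=\Wct(c(n)-1)-(n-1)\ge \Wct(\ct(n))-(n-1)=1$, forcing $\Ect(c(n)-1)=1$ and $\Wct(c(n)-1)=n$; repeating one step, $\Ect(c(n)-2)=\Wct(c(n)-2)-(n-1)$, but $c(n)-2\le c(n)-1$ so $\Wct(c(n)-2)\le n$, giving $\Ect(c(n)-2)\le 1$, consistent—so the clean contradiction must instead be extracted by showing $\Ect$ cannot equal $1$ on \emph{two} consecutive integers $c(n)-2, c(n)-1$ both lying just below the $\Wc$-element $c(n)$; this is where the explicit characterization \eqref{eq:error-counting-c} of $\Ect$ in terms of $(u_n,v_n)$, combined with Lemma~\ref{lem:uv-P1M1} relating $(u_{n-1},v_{n-1})$ to $(u_n,v_n)$, delivers the final blow. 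I expect this to be the technically delicate step; everything else is the same contradiction-via-monotonicity template already used twice in this subsection.
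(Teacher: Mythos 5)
Your setup matches the paper's: argue by contradiction from $\ct(n)\le c(n)-2$, evaluate counting functions, and force $\Ect$ to equal $1$ at a point just below $c(n)$. But there are two genuine problems. First, you justify $\Wc(c(n)-2)=n-1$ by claiming the gaps of $\Wc$ are at least $2$ "since $\beta,\gamma<1/2$" --- yet \eqref{eq:abc-condition} does not give $\gamma<1/2$; $\gamma$ can be as large as one likes below $1$, in which case $\Wc$ has gaps of size $1$ and $c(n)-2$ may lie below $c(n-1)$. The paper splits into the cases $\gamma>1/2$ and $\gamma<1/2$ precisely for this reason (in the first case it combines the gap bound with Lemma \ref{lem:c(n-1)<=ctn} to pin down $c(n-1)=c(n)-2=\ct(n)$); your argument as written silently assumes the second case.

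Second, and more seriously, the contradiction itself is never delivered. Your first attempt invokes "$c(n)-\ct(n)\le 1$," which is exactly the statement being proved (Proposition \ref{prop:error-diff-c} is downstream of this lemma), and you correctly flag this as circular; but your proposed repair ends by asserting that the characterization \eqref{eq:error-counting-c} together with Lemma \ref{lem:uv-P1M1} will show $\Ect$ cannot equal $1$ at both $c(n)-2$ and $c(n)-1$, and you leave that as "the technically delicate step." That step is the heart of the proof, and your particular formulation of it is not the right target: one can check that $\Ect(c(n)-1)=1$ is perfectly compatible with $c(n)\in\Wc$, so no contradiction comes from the point $c(n)-1$. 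What the paper actually does is show that $\Ect(m)=1$ at $m=c(n)-2$ (resp.\ $m=c(n-1)$ when $\gamma>1/2$) is incompatible with the membership $m+2=c(n)\in\Wc$ (resp.\ with the gap condition $w_m<1-\gamma$): translating $w_{m+2}<\gamma$ through \eqref{eq:w-further} and Lemma \ref{lem:floor-identities} gives $u_m+v_m>3/2+\gamma$, which together with the condition "$u_m<1-\alpha$ or $v_m<1-\beta/2$" from \eqref{eq:error-counting-c} forces $v_m>3/2-\beta>1$ or $u_m>1/2+\gamma+\beta/2>1$, using precisely the hypotheses $\beta<1/2$ and $\alpha<\gamma$. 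Since the lemma can fail without those hypotheses (cf.\ \eqref{eq:optimality-necessity-sup>=2}), this computation cannot be waved through as routine bookkeeping; omitting it leaves the proof incomplete.
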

\begin{proof}

We argue by contradiction. Suppose $ \ct(n) \le c(n)-2$. We consider, separately, the two cases for $\gamma > 1/2$ and $\gamma < 1/2$. 

Suppose first that $\gamma > 1/2$. Then $1<1/\gamma <2$. Hence, by Lemma \ref{lem:general-lemmas}(iii), the gap between any consecutive elements in $\Wc$ is equal to either 1 or 2. Therefore, $c(n)-2 \leq c(n-1)$. By Lemma \ref{lem:c(n-1)<=ctn}, we have $c(n-1) \leq \ct(n)$, so $\ct(n) \leq c(n)-2$ implies $c(n-1) = c(n)-2 =\ct(n)$. Then,
\begin{align*}
n&=\Wct(\ct(n))\\
&= \Wct(c(n-1))\\
&=\Wc(c(n-1))+\Ect(c(n-1))\\
&=n-1+ \Ect(c(n-1)).
\end{align*}
To obtain a contradiction, it now suffices to prove $\Ect(c(n-1))$ cannot be equal to 1. Denote $m = c(n-1) = \ct(n)$, and suppose $\Ect(m) = 1$. Then by Proposition \ref{prop:error-counting-c}, we have
\begin{equation}
    \label{eqproof:Ect=1equiv-m}
    3/2 < u_m+v_m <2 \text{ and } (u_m < 1-\alpha \text{ or } v_m < 1-\beta/2).
\end{equation}
By Lemma \ref{lem:general-lemmas}(iii),  $c(n-1) = c(n)-2$ implies $w_m < \fp{1/\gamma}\gamma = (1/\gamma-1)\gamma = 1-\gamma$. Then by \eqref{eq:w-further} and the first part of     \eqref{eqproof:Ect=1equiv-m},
\begin{align*}
    w_m < 1-\gamma &\Longleftrightarrow 1-\fp{u_m+v_m-1/2} < 1-\gamma\\
    &\Longleftrightarrow 1-(u_m+v_m-3/2) < 1-\gamma\\
    &\Longleftrightarrow u_m+v_m > 3/2+\gamma > 2,
\end{align*}
which is not possible since $u_m, v_m \in (0,1)$. This proves the result for the case $\gamma > 1/2$.

Now suppose $\gamma < 1/2$. By \eqref{eq:ab-less-half}, we have 
\begin{equation}
\label{eqproof:1/4<c<1/2}
    1/4 < \gamma < 1/2.
\end{equation}
Consider
\begin{align*}
n&=\Wct(\ct(n))\\
&\leq \Wct(c(n)-2)\\
&=\Wc(c(n)-2)+\Ect(c(n)-2)\\
&=n-1+ \Ect(c(n)-2).
\end{align*}
Denote $m' = c(n)-2$. To obtain a contradiction, it suffices to prove that $\Ect(m')$ can never attain the value 1. Suppose $\Ect(m') = 1$. Then by the same reasoning as for \eqref{eqproof:Ect=1equiv-m}, we have 
\begin{equation}
    \label{eqproof:Ect=1equiv-m'}
    3/2 < u_{m'}+v_{m'} <2 \text{ and } (u_{m'} < 1-\alpha \text{ or } v_{m'} < 1-\beta/2).
\end{equation}
By Lemma \ref{lem:general-lemmas}(i), the condition $m'+2 = c(n) \in \Wc$ is equivalent to $w_{m'+2} < \gamma$, which is further equivalent to $1-\fp{u_{m'+2}+v_{m'+2}-1/2} < \gamma$ by \eqref{eq:w-further}. We have
\begin{align*}
    1-\fp{u_{m'+2}+v_{m'+2}-1/2} &= 1-\fp{\fp{u_{m'}+2\alpha} + \fp{v_{m'}+2\beta} -1/2}\\
    &= 1-\fp{u_{m'}+2\alpha - \fl{u_{m'}+2\alpha} + v_{m'}+2\beta - \fl{v_{m'}+2\beta} -1/2}\\
    &= 1-\fp{u_{m'} +  v_{m'} +2(1-\gamma)  -1/2}\\
    &= 1-\fp{u_{m'}+  v_{m'}  -2\gamma   -1/2}.
\end{align*}
Thus, 
\begin{align*}
    w_{m'+2} < \gamma \Longleftrightarrow  1-\fp{u_{m'} +  v_{m'}-2\gamma  -1/2} < \gamma.
\end{align*}
By \eqref{eqproof:1/4<c<1/2} and     \eqref{eqproof:Ect=1equiv-m'}, we have $3/2 < u_{m'} + v_{m'} < 2$ and $1/4<\gamma < 1/2$. It implies $0 < u_{m'} +  v_{m'}-2\gamma  -1/2 < 1$. Hence,
\begin{align*}
   w_{m'+2} < \gamma &\Longleftrightarrow 1-(u_{m'} +  v_{m'}-2\gamma  -1/2) < \gamma\\
   &\Longleftrightarrow u_{m'} +  v_{m'} > 3/2 + \gamma .
\end{align*}
We now prove that the latter inequality contradicts the condition $(u_{m'} < 1-\alpha \text{ or } v_{m'} < 1-\beta/2)$ in \eqref{eqproof:Ect=1equiv-m'}. When $u_{m'} < 1-\alpha$, because $\beta < 1/2$ (see \eqref{eq:ab-less-half}), we must have
\begin{align*}
    v_{m'}  &> 3/2 + \gamma - (1-\alpha)\\
    &= 3/2-\beta\\
    &> 1,
\end{align*}
which is not possible. When $v_{m'} < 1-\beta/2$, because $\alpha < \gamma$ and $\alpha+\beta+\gamma = 1$ (see \eqref{eq:abc-condition}) imply that  $\gamma + \beta/2 > 1/2$, we must have
\begin{align*}
    u_{m'}  &> 3/2 + \gamma - (1-\beta/2)\\
    &= 1/2 + \gamma + \beta/2\\
    &> 1,
\end{align*}
which is again not possible. This gives the desired contradiction and thus completes the proof.
\end{proof}

\begin{lem}
\label{lem:Ect-implies-nIn}
For any $m\in\NN$,
\begin{align}
    \label{lem:EctM1-implies-nInWc}
   &\text{ if $E_\gamma(m)= -1$, then $m \in \Wc$;}\\
   \label{lem:Ect1-implies-nInWct}
   &\text{ if $E_\gamma(m)= 1$, then $m \in \Wct$.}
\end{align}
\end{lem}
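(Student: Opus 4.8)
The plan is to prove the two implications in Lemma \ref{lem:Ect-implies-nIn} by combining the explicit characterization of $E_\gamma(n)$ from Proposition \ref{prop:error-counting-c} with the membership criterion for $\Wc$ (Lemma \ref{lem:general-lemmas}(i), rewritten via \eqref{eq:w-further}) and the membership criterion for $\Wct$ (Lemma \ref{lem:inWct}). Concretely, $m\in\Wc$ iff $w_m<\gamma$, and by \eqref{eq:w-further} this is $1-\fp{u_m+v_m-1/2}<\gamma$, i.e. $\fp{u_m+v_m-1/2}>1-\gamma$. So the whole argument reduces to translating the inequalities on $(u_n,v_n)$ coming from Proposition \ref{prop:error-counting-c} into the fractional-part condition on $u_n+v_n$, using the case split on which interval $u_n+v_n$ lies in.

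For \eqref{lem:EctM1-implies-nInWc}: if $E_\gamma(n)=-1$, then by \eqref{eqproo:Ect=-1Characterize} (the last displayed characterization in the proof of Proposition \ref{prop:error-counting-c}) we have $0<u_n+v_n<1/2$ together with $(u_n>\alpha$ or $v_n>\beta/2)$. On this range $\fp{u_n+v_n-1/2}=u_n+v_n+1/2$, so the membership condition $\fp{u_n+v_n-1/2}>1-\gamma$ becomes $u_n+v_n>1/2-\gamma$. It remains to check that the side conditions force $u_n+v_n>1/2-\gamma$; this should follow from $\alpha+\beta/2<1/2$ (see \eqref{eqproof:a+b/2<1/2}): if $u_n>\alpha$ then I would bound $v_n$ from below and combine, and symmetrically if $v_n>\beta/2$. (Because $\gamma>1/4$ by \eqref{eq:ab-less-half}, the target $1/2-\gamma<1/4$, so the bound is quite loose.)

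For \eqref{lem:Ect1-implies-nInWct}: if $E_\gamma(n)=1$, then by \eqref{eqproo:Ect=1Characterize} we have $3/2<u_n+v_n<2$ and $(u_n<1-\alpha$ or $v_n<1-\beta/2)$. I want to verify the three clauses in Lemma \ref{lem:inWct}. The clause $(u_n<1-\alpha$ or $v_n<1-\beta/2)$ is given outright. For $u_n>\alpha$ and $v_n>\beta$: since $u_n+v_n>3/2$ and $\alpha,\beta<1/2$ (see \eqref{eq:ab-less-half}), each of $u_n,v_n$ exceeds $3/2-1=1/2>\max(\alpha,\beta)$. For the last clause $(u_n>2\alpha$ or $v_n>3\beta/2)$: again $u_n,v_n>1/2$, and $2\alpha<1$, $3\beta/2<3/4$, so I need a slightly more careful estimate — I would use $u_n+v_n>3/2$ to say that at least one of $u_n,v_n$ exceeds $3/4$, hence exceeds $2\alpha$ or $3\beta/2$ respectively; alternatively $u_n>1/2\ge 2\alpha$ unless $\alpha>1/4$, in which case I fall back on $u_n+v_n>3/2$ and $v_n<1$ giving $u_n>1/2$ plus a comparison with $2\alpha$, handling $\alpha\le\gamma$ as needed. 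I will also invoke \eqref{eq:uvequality-impossible} to rule out the boundary equalities so the strict inequalities in Lemma \ref{lem:inWct} are legitimate.

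The main obstacle is the last clause of \eqref{lem:Ect1-implies-nInWct}, namely showing $u_n>2\alpha$ or $v_n>3\beta/2$: unlike the other clauses, the naive bound $u_n,v_n>1/2$ does not immediately beat the thresholds $2\alpha$ and $3\beta/2$ when $\alpha$ or $\beta$ is close to $1/2$. Resolving it will require using the full strength of $u_n+v_n>3/2$ — so that if $u_n\le 3/4$ then $v_n>3/4>3\beta/2$ — together with the disjunction $(u_n<1-\alpha$ or $v_n<1-\beta/2)$ to pin down which of the two variables is large, and possibly the ordering $\alpha<\gamma$ from \eqref{eq:abc-condition}. Everything else is a mechanical interval chase.
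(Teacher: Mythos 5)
Your overall strategy is the same as the paper's: both implications are read off from Proposition \ref{prop:error-counting-c} combined with the membership criteria ($n\in\Wc\Leftrightarrow w_n<\gamma$ via \eqref{eq:w-further}, and Lemma \ref{lem:inWct} for $\Wct$). Your treatment of \eqref{lem:EctM1-implies-nInWc} is essentially the paper's argument verbatim: on $0<u_n+v_n<1/2$ the condition $n\in\Wc$ becomes $u_n+v_n>\alpha+\beta-1/2\;(=1/2-\gamma)$, and this follows from $u_n>\alpha$ together with $\beta<1/2$, or from $v_n>\beta/2$ together with \eqref{eqproof:a+b/2<1/2}. Likewise, your verification of the first two clauses of Lemma \ref{lem:inWct} for \eqref{lem:Ect1-implies-nInWct} ($u_n,v_n>1/2>\max(\alpha,\beta)$ since $u_n+v_n>3/2$ and $u_n,v_n<1$) matches the paper.

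The gap is exactly where you flag it: the clause ``$u_n>2\alpha$ or $v_n>3\beta/2$.'' Your first proposed step, that one of $u_n,v_n$ exceeds $3/4$ and ``hence exceeds $2\alpha$ or $3\beta/2$ respectively,'' fails on the $\alpha$ side: condition \eqref{eq:abc-condition} permits $\alpha\in(3/8,1/2)$ (e.g.\ $\alpha=0.4$, $\beta=0.15$, $\gamma=0.45$), in which case $2\alpha>3/4$ and $u_n>3/4$ gives nothing; only $3\beta/2<3/4$ is automatic. Your fallback (``use the disjunction $u_n<1-\alpha$ or $v_n<1-\beta/2$, and possibly $\alpha<\gamma$'') is not an argument, and in fact that disjunction is irrelevant to this clause. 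The paper closes the step by contradiction using only the density constraints: if $u_n<2\alpha$ and $v_n<3\beta/2$, then $u_n+v_n<2\alpha+\tfrac{3\beta}{2}=\tfrac{\alpha}{2}+\tfrac32(1-\gamma)<\tfrac14+\tfrac98=\tfrac{11}{8}<\tfrac32$, using $\alpha<1/2$ and $\gamma>1/4$ from \eqref{eq:ab-less-half}, contradicting $u_n+v_n>3/2$. (Your case split can be repaired in the same spirit: if $u_n\le 3/4$ then $v_n>3/4>3\beta/2$; if $u_n>3/4$ and both inequalities fail, then $\alpha>3/8$ and, since $v_n>1/2$, $\beta>1/3$, whence $\gamma<7/24<\alpha$, contradicting $\alpha<\gamma$.) Some such quantitative use of \eqref{eq:abc-condition} is indispensable, and it is the one step your proposal leaves unproved.
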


\begin{proof}
To prove \eqref{lem:EctM1-implies-nInWc}, suppose we have $\Ect(m) = -1$. By Proposition \ref{prop:error-counting-c}, this implies
\begin{equation}
\label{eqproof:Ect-M1}
      0 < u_m+v_m < 1/2 \text{ and } ( u_m > \alpha \text{ or } v_m > \beta/2).
\end{equation}
By Lemma \ref{lem:general-lemmas}(i) and \eqref{eq:w-further}, $m \in W_\gamma$ is equivalent to $w_m<\gamma$, where $w_m = 1-\{u_m+v_m-1/2\}$ and $\gamma = 1-\alpha-\beta$. Under the condition \eqref{eqproof:Ect-M1}, we thus have
\begin{align*}
    m \in \Wc &\Longleftrightarrow1-\{u_m+v_m-1/2\} < 1-\alpha-\beta\\
     \notag
     &\Longleftrightarrow  1-(u_m+v_m+1/2)
     <1-\alpha-\beta\\
     \notag
     &\Longleftrightarrow u_m+v_m > \alpha+\beta - 1/2.
\end{align*}
Therefore, it suffices to show
\begin{align}
\label{eqproof:nInWc}
 u_m+v_m > \alpha+\beta - 1/2.
\end{align}
We consider the cases $u_m > \alpha$ and $v_m > \beta/2$ in \eqref{eqproof:Ect-M1} separately. When $u_m > \alpha$, since $\beta < 1/2$ by \eqref{eq:ab-less-half}, we have
\begin{align*}
    u_m+v_m > u_m > \alpha > \alpha + \beta-1/2,
\end{align*}
which proves \eqref{eqproof:nInWc}
 in this case.
When $v_m > \beta/2$, since $\alpha+\beta/2 <1/2$ by \eqref{eqproof:a+b/2<1/2}, we have
\begin{align*}
   u_m+v_m > v_m > \beta/2 > \alpha + \beta-1/2,
\end{align*}
which proves \eqref{eqproof:nInWc} in the case for $v_m > \beta/2$.

Now we prove  \eqref{lem:Ect1-implies-nInWct}. Suppose $\Ect(m) = 1$ for some $m$. By Proposition \ref{prop:error-counting-c}, this implies
\begin{equation}
    \label{Egamma=1_imply}
    3/2 < u_m+v_m <2 \text{ and } ( u_m < 1-\alpha \text{ or } v_m < 1-\beta/2).
\end{equation}
Recall from Lemma \ref{lem:inWct} that \begin{align*}
     m\in \Wct &\Longleftrightarrow \text{$u_m > \alpha$ and $v_m > \beta$}\\
     \notag
    &\qquad\text{ and ($u_m < 1-\alpha$ or $v_m < 1-\beta/2$)}\\
    \notag
   &\qquad\text{ and ($u_m > 2\alpha$ or $v_m > 3\beta/2$)}
\end{align*}
Clearly $\Ect(m) = 1 $ implies $ u_m > \alpha, v_m > \beta \text{ and } ( u_m < 1-\alpha \text{ or } v_m < 1-\beta/2)$, since $\alpha,\beta < 1/2$ and $u_m, v_m < 1$. It suffices to prove that $\Ect(m) = 1$ also implies $(u_m > 2\alpha \text{ or } v_m > 3\beta/2)$. We argue by contradiction. Suppose $u_m < 2\alpha$ and $v_m < 3\beta/2$. Then, by \eqref{eq:ab-less-half}, we have
\begin{align*}
    u_m+v_m &< 2\alpha + \frac{3\beta}{2}\\\notag
    &= \frac{\alpha}{2} + \frac{3}{2} (1-\gamma)\\\notag
    &< \frac{1/2}{2} +\frac{3}{2} (1-\frac{1}{4})\\\notag
    &= \frac{11}{8}.
\end{align*}
which contradicts $u_m+v_m > 3/2$ given by \eqref{Egamma=1_imply}. This completes the proof.

\end{proof}
\begin{lem}
\label{lem:ctn<=cn+1}
For any $n\in\NN$,
\begin{equation}
\ct(n) \leq c(n)+1.
\end{equation}
\end{lem}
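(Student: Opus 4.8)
The plan is to argue by contradiction, translating the claim $\ct(n)\le c(n)+1$ into a statement about counting functions, exactly in the spirit of the proofs of Lemmas \ref{lem:c(n-1)<=ctn}--\ref{lem:cn-1<=ctn}. Suppose, for contradiction, that $\ct(n)\ge c(n)+2$. Since $\ct(n)$ is the $n$th element of $\Wct$, we have $\Wct(\ct(n)-1)=n-1$, and $\ct(n)-1\ge c(n)+1$, so by monotonicity of the counting function $\Wct$,
\[
n-1=\Wct(\ct(n)-1)\ge \Wct(c(n)+1).
\]
Hence it suffices to show $\Wct(c(n)+1)\ge n$ to obtain the desired contradiction.

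To estimate $\Wct(c(n)+1)$ I would use the identity $\Wct(m)=\Wc(m)+\Ect(m)$ from \eqref{def:ect}, together with $\Wc(c(n))=n$ and the monotonicity of $\Wc$, which give
\[
\Wct(c(n)+1)=\Wc(c(n)+1)+\Ect(c(n)+1)\ge n+\Ect(c(n)+1).
\]
By Proposition \ref{prop:error-counting-c} we know $\Ect(c(n)+1)\in\{-1,0,1\}$. If $\Ect(c(n)+1)\ge 0$, then $\Wct(c(n)+1)\ge n$ immediately and we are done, so the only case that needs work is $\Ect(c(n)+1)=-1$.

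In the case $\Ect(c(n)+1)=-1$ I would invoke Lemma \ref{lem:Ect-implies-nIn}, specifically \eqref{lem:EctM1-implies-nInWc}, which says that $\Ect(m)=-1$ forces $m\in\Wc$. Applying this with $m=c(n)+1$ gives $c(n)+1\in\Wc$; since $c(n)\in\Wc$ as well, we get $\Wc(c(n)+1)=\Wc(c(n))+1=n+1$, and therefore $\Wct(c(n)+1)=(n+1)+(-1)=n$. In all cases $\Wct(c(n)+1)\ge n$, contradicting $\Wct(c(n)+1)\le n-1$.

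The only substantive point in the argument is the case $\Ect(c(n)+1)=-1$, which is dispatched cleanly by Lemma \ref{lem:Ect-implies-nIn}; everything else is monotonicity of counting functions and the defining identity \eqref{def:ect}. In particular, unlike the proof of Lemma \ref{lem:cn-1<=ctn}, no case split on whether $\gamma$ exceeds $1/2$ should be needed. Combined with Lemma \ref{lem:cn-1<=ctn}, this yields $|c(n)-\ct(n)|\le 1$, which is exactly the error bound required in Proposition \ref{prop:error-diff-c}.
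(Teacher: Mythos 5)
Your proposal is correct and follows essentially the same route as the paper: assume $\ct(n)\ge c(n)+2$, compare $\Wct(\ct(n)-1)=n-1$ with $\Wct(c(n)+1)=\Wc(c(n)+1)+\Ect(c(n)+1)$ via monotonicity, and rule out the only problematic case $\Ect(c(n)+1)=-1$ using Lemma \ref{lem:Ect-implies-nIn}, which forces $c(n)+1\in\Wc$ and hence $\Wc(c(n)+1)=n+1$, a contradiction. The paper packages the case analysis slightly differently (deducing $\Ect(c(n)+1)=-1$ and $\Wc(c(n)+1)=n$ directly from the inequality chain), but the ingredients and logic are the same.
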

\begin{proof}
We argue by contradiction. Suppose $\ct(n) \geq c(n)+2$. Then 
\begin{align}
\label{eqproof:ctn-righthalf}
    n-1 &= \Wct(\ct(n)-1) \\
    \notag
    &\geq \Wct(c(n)+1)\\
    \notag
    &= \Wc(c(n)+1) + \Ect(c(n)+1)\\
    \notag
    &\geq n + \Ect(c(n)+1).
\end{align}
Note that \eqref{eqproof:ctn-righthalf} implies $\Ect(c(n)+1) = -1$ and $\Wc(c(n)+1) = n$. By Lemma \ref{lem:Ect-implies-nIn}, $\Ect(c(n)+1) = -1$ implies $c(n)+1 \in \Wc$ and therefore $\Wc(c(n)+1) = n+1$, which contradicts $\Wc(c(n)+1) = n$.
\end{proof}

\begin{lem}
\label{lem:c-error-characterize-original}
Given $n\in\mathbb{N}$, set $m = c(n), \widetilde{m}=\ct(n)$. Then $m-\widetilde{m} = \{-1, 0, 1\}$ and 
\begin{equation*}
m-\widetilde{m}=
\begin{cases}
-1&\text{if $\widetilde{m} \in \Wct$ and ($\Ect(\widetilde{m}) = -1$ or
($\Ect(\widetilde{m}) = 0$ and $\widetilde{m} \notin \Wc$))};
\\
0&\text{if
$\widetilde{m} \in \Wct$ and $\Ect(\widetilde{m}) = 0$  and $\widetilde{m} \in \Wc$};
\\
1&\text{if
$\widetilde{m} \in \Wct$ and $\Ect(\widetilde{m}) = 1$}.
\end{cases}
\end{equation*}
\end{lem}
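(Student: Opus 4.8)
The plan is to reduce the whole statement to bookkeeping with counting functions, leveraging the position bounds already established. First I would observe that the claim $c(n)-\ct(n)\in\{-1,0,1\}$ is immediate from Lemmas \ref{lem:cn-1<=ctn} and \ref{lem:ctn<=cn+1}, which together give $c(n)-1\le\ct(n)\le c(n)+1$, i.e. $c(n)-m\in\{-1,0,1\}$ for $m=\ct(n)$. I would also note at the outset that, since $m=\ct(n)$ is by definition the $n$th element of $\Wct$, the hypothesis ``$m\in\Wct$'' appearing in all three cases is automatically satisfied and is recorded only for clarity; what actually separates the cases is the value of $\Ect(m)$, refined in the borderline case $\Ect(m)=0$ by the membership of $m$ in $\Wc$.

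The key identity is that $\Wct(m)=n$, since $m=\ct(n)$; hence by the definition \eqref{def:ect} of $\Ect$ we have $\Wc(m)=\Wct(m)-\Ect(m)=n-\Ect(m)$. By Proposition \ref{prop:error-counting-c} we know $\Ect(m)\in\{-1,0,1\}$, so I would run a three-case analysis, in each case converting the value of $\Wc(m)$ into information about the position of $m$ relative to $\Wc$ using the elementary facts that $A(m)<r$ iff $m<a(r)$, that $A(m)\ge r$ iff $m\ge a(r)$, and that $m=a(r)$ iff $A(m)=r$ and $m\in A$, applied with $A=\Wc$.

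Concretely: if $\Ect(m)=1$, then $\Wc(m)=n-1<n$, so $m<c(n)$, hence $c(n)\ge m+1$; combined with $c(n)\le m+1$ from Lemma \ref{lem:cn-1<=ctn} this forces $c(n)-\ct(n)=1$. If $\Ect(m)=-1$, then $\Wc(m)=n+1$, so $c(n+1)\le m$ and therefore $c(n)<c(n+1)\le m$, hence $c(n)\le m-1$; combined with $c(n)\ge m-1$ from Lemma \ref{lem:ctn<=cn+1} this forces $c(n)-\ct(n)=-1$. If $\Ect(m)=0$, then $\Wc(m)=n$: when $m\in\Wc$ this gives $m=c(n)$, i.e. $c(n)-\ct(n)=0$; when $m\notin\Wc$ this gives $c(n)\le m-1$, and again $c(n)\ge m-1$ forces $c(n)-\ct(n)=-1$. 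Assembling the cases yields exactly the stated trichotomy.

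I do not expect a real obstacle here, since all of the analytic work has already been absorbed into Lemmas \ref{lem:c(n-1)<=ctn}--\ref{lem:ctn<=cn+1}; what remains is purely combinatorial. The only point that requires a little care is the split of the case $\Ect(m)=0$ into the sub-cases $m\in\Wc$ and $m\notin\Wc$, which is precisely what produces the two distinct answers $0$ and $-1$ appearing in the statement.
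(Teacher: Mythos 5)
Your proposal is correct and follows essentially the same route as the paper: both arguments get $c(n)-\ct(n)\in\{-1,0,1\}$ from Lemmas \ref{lem:cn-1<=ctn} and \ref{lem:ctn<=cn+1} and then reduce everything to counting-function bookkeeping via $\Wct(m)=n$ and $\Wc(m)=n-\Ect(m)$. The only (harmless) difference is the direction of the case analysis — you case on $\Ect(m)$ and on whether $m\in\Wc$ and deduce the value of $c(n)-\ct(n)$, whereas the paper cases on the value of $c(n)-\ct(n)$ and deduces $\Ect(m)$; since your three conditions are mutually exclusive and exhaustive, your direction already yields the full characterization.
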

\begin{proof}
$m-\widetilde{m} = \{-1, 0, 1\}$ follows from Lemma \ref{lem:cn-1<=ctn} and Lemma \ref{lem:ctn<=cn+1}. Moreover, by Proposition \ref{prop:error-counting-c}, it is easy to see that the conditions on the right exhaust all possibilities. Since $m = c(n), \widetilde{m}=\ct(n)$, we have $\Wct(\widetilde{m})=n = \Wc(m)$. We break the discussion into three cases according to the value of $m$.
 
\textit{Case I: $m= \widetilde{m}$}. In this case, $\Wct(\widetilde{m}) = n = \Wc(\widetilde{m})$, so $\Ect(\widetilde{m}) = \Wct(\widetilde{m}) - \Wc(\widetilde{m}) = 0$.

\textit{Case II: $m= \widetilde{m}+1$}. In this case,
\begin{align*}
    \Ect(\widetilde{m}) &= \Wct(\widetilde{m}) - \Wc(\widetilde{m})\\
    &= \Wct(\widetilde{m}) - \Wc(m-1)\\
    &=n - (n-1)\\
    &= 1.
\end{align*}

\textit{Case III: $m= \widetilde{m}-1$}. In this case,
\begin{align*}
    \Ect(\widetilde{m}) &= \Wct(\widetilde{m}) - \Wc(\widetilde{m})\\
    &= \Wct(\widetilde{m}) - \Wc(m+1)\\
    &= 0 \text{ or } -1.
\end{align*}

Note that by Proposition \ref{prop:error-counting-c}, $\Ect(\widetilde{m}) \in \{-1, 0, 1\}$. Thus, we have that $\Ect(\widetilde{m})=1$ implies $m=\widetilde{m}+1$ and $\Ect(\widetilde{m})=-1$ implies $m=\widetilde{m}-1$. When $\Ect(\widetilde{m})=0$, there are two possible situations: $m=\widetilde{m}-1$ or $m=\widetilde{m}$. In fact, given $\Ect(\widetilde{m})=0$, $\widetilde{m} \in \Wc$ if and only if $m=\widetilde{m}$. The ``if" part is clear since $m \in \Wc$. We prove the``only if" part by contradiction. Suppose $\widetilde{m} \in \Wc$ and $m=\widetilde{m}-1$ hold simultaneously, then $\Wc(\widetilde{m}) = n+1$ and thus $\Ect(\widetilde{m}) = \Wct(\widetilde{m}) - \Wc(\widetilde{m}) = n - (n+1) = -1$, contradicting $\Ect(\widetilde{m})=0$. Hence, the condition to differentiate the two situations is whether $\widetilde{m} \notin \Wc$ or $\widetilde{m} \in \Wc$. Combining all above gives the error characterization conditions in this lemma.
\end{proof}

\subsection{Proof of Proposition \ref{prop:error-diff-c}
}
\label{subsec:proof-prop-c-sec6}
By Lemma \ref{lem:c-error-characterize-original}, we have $m-\widetilde{m} \in \fp{-1,0,1}$, so it remains to show that the conditions for the three cases for the errors $m-\widetilde{m}$ in Lemma \ref{lem:c-error-characterize-original} are equivalent to the conditions \eqref{eq:error-diff-c} in Proposition \ref{prop:error-diff-c}.

\textit{Case I: $m-\widetilde{m} = 1$.} By Lemma \ref{lem:c-error-characterize-original}, $m-\widetilde{m} = 1$ is equivalent to 
\begin{equation}
    \label{cn-ctn=1-equiv}
    \widetilde{m}\in \Wct \text{ and } \Ect(\widetilde{m})=1.
\end{equation}
In fact, by Lemma \ref{lem:Ect-implies-nIn}, $\Ect(\widetilde{m})=1$ implies $\widetilde{m}\in \Wct$, so     \eqref{cn-ctn=1-equiv} is equivalent to $\Ect(\widetilde{m})=1$. Thus, by Proposition \ref{prop:error-counting-c}, 
\begin{align*}
    m-\widetilde{m} = 1 &\Longleftrightarrow \Ect(\widetilde{m})=1\\
    &\Longleftrightarrow 3/2 < u_{\widetilde{m}}+v_{\widetilde{m}} <2 \text{ and } ( u_{\widetilde{m}} < 1-\alpha \text{ or } v_{\widetilde{m}} < 1-\beta/2).
\end{align*}

\textit{Case II: $m-\widetilde{m} = 0$.} By Lemma \ref{lem:c-error-characterize-original},  $m-\widetilde{m} = 0$ is equivalent to
\begin{equation}
    \label{pfProp6.1-eqN1}
     \widetilde{m}\in \Wct \text{ and } \Ect(\widetilde{m})=0 \text{ and  } \widetilde{m} \in\Wc.
\end{equation}
 Note that when $\widetilde{m}\in \Wct$, by Lemma \ref{lem:inWct} and Proposition \ref{prop:error-counting-c}, we have
 \begin{equation}
 \label{eqproof:assume-mInWct-Ect=0}
    \Ect(\widetilde{m})=0 \Longleftrightarrow 1/2 < u_{\widetilde{m}}+v_{\widetilde{m}} < 3/2.
 \end{equation}
 Moreover, \eqref{eq:w-further} and Lemma \ref{lem:general-lemmas}(i) give that 
 \begin{equation}
    \label{pfProp6.1-eqN2}
     \widetilde{m} \in\Wc \Longleftrightarrow w_{\widetilde{m}}<\gamma \Longleftrightarrow \fp{u_{\widetilde{m}}+v_{\widetilde{m}}-1/2} > 1-\gamma.
\end{equation}
 Hence by \eqref{pfProp6.1-eqN1}, Lemma \ref{lem:inWct},  \eqref{eqproof:assume-mInWct-Ect=0}, and  \eqref{pfProp6.1-eqN2},
\begin{align*}
    m-\widetilde{m} = 0 &\Longleftrightarrow \widetilde{m}\in \Wct \text{ and } \Ect(\widetilde{m})=0 \text{ and }\widetilde{m} \in\Wc.\\
    &\Longleftrightarrow 1/2 < u_{\widetilde{m}}+v_{\widetilde{m}} < 3/2, u_{\widetilde{m}} > \alpha, v_{\widetilde{m}} > \beta, \text{ ($u_{\widetilde{m}} < 1-\alpha$ or $v_{\widetilde{m}} < 1-\beta/2$)},\\
    &\qquad\text{ ($u_{\widetilde{m}} > 2\alpha$ or $v_{\widetilde{m}} > 3\beta/2$), $\fp{u_{\widetilde{m}}+v_{\widetilde{m}}-1/2} > 1- \gamma$}.
\end{align*}
When we assume $1/2 < u_{\widetilde{m}}+v_{\widetilde{m}} < 3/2$, we have
\[\fp{u_{\widetilde{m}}+v_{\widetilde{m}}-1/2} > 1- \gamma \Longleftrightarrow u_{\widetilde{m}}+v_{\widetilde{m}} > 3/2-\gamma.
\]
Note that $(u_{\widetilde{m}} > 2\alpha \text{ or } v_{\widetilde{m}} > 3\beta/2)$ is already implied by $ u_{\widetilde{m}}+v_{\widetilde{m}} > 3/2-\gamma$.  Suppose $u_{\widetilde{m}} < 2\alpha \text{ and } v_{\widetilde{m}} < 3\beta/2 $, then $2\alpha + 3\beta/2 > u_{\widetilde{m}}+v_{\widetilde{m}} > 3/2-\gamma$. It implies that  $\alpha > \gamma$, which contradicts condition \eqref{eq:abc-condition}. Hence, condition $(u_{\widetilde{m}} > 2\alpha \text{ or } v_{\widetilde{m}} > 3\beta/2)$ is redundant. Furthermore, $ (u_{\widetilde{m}} < 1-\alpha \text{ or }v_{\widetilde{m}} < 1-\beta/2)$ is also redundant, because if we assume $ (u_{\widetilde{m}} > 1-\alpha \text{ and }v_{\widetilde{m}} > 1-\beta/2)$, then $u_{\widetilde{m}}+v_{\widetilde{m}} > 1-\alpha + 1-\beta/2 > 3/2$ by \eqref{eqproof:a+b/2<1/2}, which contradicts $1/2<u_{\widetilde{m}}+v_{\widetilde{m}} < 3/2$.
Therefore, we get
\begin{align*}
    m-\widetilde{m} = 0 &\Longleftrightarrow 1/2 < u_{\widetilde{m}}+v_{\widetilde{m}} < 3/2, u_{\widetilde{m}} > \alpha, v_{\widetilde{m}} > \beta, u_{\widetilde{m}}+v_{\widetilde{m}} > 3/2-\gamma\\
    &\Longleftrightarrow  \text{$u_{\widetilde{m}}>\alpha$ and  $v_{\widetilde{m}}>\beta$ and $3/2-\gamma < u_{\widetilde{m}}+v_{\widetilde{m}} < 3/2$.}
\end{align*}

\textit{Case III: $m-\widetilde{m} = -1$}. By Lemma \ref{lem:c-error-characterize-original},
\begin{align}
    \label{eqproof:cn-ctn=-1Condition}
    m-\widetilde{m} = -1 &\Longleftrightarrow \widetilde{m} \in \Wct \text{ and } (E_\gamma(\widetilde{m}) = -1 \text{ or } (E_\gamma(\widetilde{m}) = 0 \text{ and } \widetilde{m} \notin W_\gamma)).
 \end{align}
 By Proposition \ref{prop:error-counting-c}, \eqref{eqproof:assume-mInWct-Ect=0}, and \eqref{eq:w-further},
we have
\begin{align*}
    E_\gamma(\widetilde{m}) = -1 \text{ or } &(E_\gamma(\widetilde{m}) = 0 \text{ and } \widetilde{m} \notin W_\gamma)\\
    &\Longleftrightarrow \text{ ($0 < u_{\widetilde{m}}+v_{\widetilde{m}} < 1/2$ and ($u_{\widetilde{m}} > \alpha$ or $v_{\widetilde{m}} > \beta/2$))}\\
    &\qquad\text{ or ($1/2<u_{\widetilde{m}}+v_{\widetilde{m}}<3/2$ and $\fp{u_{\widetilde{m}}+v_{\widetilde{m}}-1/2}<\alpha+\beta$)}\\
    &\Longleftrightarrow  \text{ ($0 < u_{\widetilde{m}}+v_{\widetilde{m}} < 1/2$ and ($u_{\widetilde{m}} > \alpha$ or $v_{\widetilde{m}} > \beta/2$))}\\
    &\qquad\text{ or ($1/2<u_{\widetilde{m}}+v_{\widetilde{m}}<3/2$ and $u_{\widetilde{m}}+v_{\widetilde{m}}<3/2-\gamma$)}.
\end{align*}
 Consider the equivalent condition of $\widetilde{m}\in\Wct$ in \eqref{eq:inWct}, observe that $(u_{\widetilde{m}}>1-\alpha \text{ and } v_{\widetilde{m}} > 1-\beta/2)$ implies $u_{\widetilde{m}}+v_{\widetilde{m}} > 3/2$, which is impossible by the previous formula, so we can eliminate the second conjunct in  the equivalent condition for $\widetilde{m} \in \Wct$. Then we have, by \eqref{eq:inWct} and  \eqref{eqproof:cn-ctn=-1Condition},
 \begin{align*}
    m-\widetilde{m} = -1  &\Longleftrightarrow  \text{$u_{\widetilde{m}} > \alpha$ and $v_{\widetilde{m}} > \beta$ and ($u_{\widetilde{m}} > 2\alpha$ or $v_{\widetilde{m}} > 3\beta/2$)}\\
   &\qquad\text{ and (($0 < u_{\widetilde{m}}+v_{\widetilde{m}} < 1/2$ and ($u_{\widetilde{m}} > \alpha$ or $v_{\widetilde{m}} > \beta/2$))}\\
    &\qquad\text{ or ($1/2<u_{\widetilde{m}}+v_{\widetilde{m}}<3/2$ and $u_{\widetilde{m}}+v_{\widetilde{m}}<3/2-\gamma$))}\\
     &\Longleftrightarrow \text{$u_{\widetilde{m}} > \alpha$ and $v_{\widetilde{m}} > \beta$ and ($u_{\widetilde{m}} > 2\alpha$ or $v_{\widetilde{m}} > 3\beta/2$)}\\
   &\qquad\text{ and $u_{\widetilde{m}}+v_{\widetilde{m}} < 3/2-\gamma$}.
\end{align*}
which is the condition in \eqref{eq:error-diff-c}.
This completes the proof of Proposition \ref{prop:error-diff-c}. \qed


\section{Optimality}
\label{sec:optimality}
In this section, we show that Theorem \ref{thm:main} is best-possible in several respects. Using a result of Graham, we first show in Theorem \ref{thm:optimality:TwoExactOneAlmost} below that, under a mild linear independence condition on the densities $\alpha, \beta, \gamma$, a partition of $\NN$ into three or more sequences can involve at most one (exact) Webster sequence.
\begin{lem}[Graham \cite{Graham1973}]
\label{lem:disjointness}
  Suppose $\Wa,\Wb$ are disjoint. Then either
    \begin{itemize}
    \item [(i)] $\alpha/\beta$ is rational; or
    \item [(ii)] there exist positive integers $r,s$ such that $r\alpha+s\beta=1$ and $r\equiv s \mod 2$.
    \end{itemize}
\end{lem}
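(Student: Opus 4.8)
Although the statement is quoted from a theorem of Graham \cite{Graham1973}, I would give a self-contained proof using the tools of Section~\ref{sec:aux-lemmas}. The key reformulation, via the membership criterion Lemma~\ref{lem:general-lemmas}(i), is that $\Wa$ and $\Wb$ are disjoint if and only if there is no $m\in\NN$ with $(\fp{m\alpha+1/2},\fp{m\beta+1/2})$ in the box $[0,\alpha)\times[0,\beta)$; since neither $\fp{m\alpha+1/2}$ nor $\fp{m\beta+1/2}$ can equal $0$, $\alpha$, or $\beta$ (by the argument behind \eqref{eq:uvequality-impossible}), this is the same as missing the open box $(0,\alpha)\times(0,\beta)$.

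I would first dispose of the easy cases. If $\alpha/\beta\in\QQ$ we are in case (i), so assume $\alpha/\beta\notin\QQ$. Then both $\alpha$ and $\beta$ are irrational: if, say, $\alpha=p/q\in\QQ$, then $\beta$ is irrational, and picking any $m_0\in\Wa$ we get $m_0+2qk\in\Wa$ for all $k\geq 0$, while $\fp{(m_0+2qk)\beta+1/2}$ is uniformly distributed in $k$ by Lemma~\ref{lem:weyl-general} applied to the irrational $2q\beta$, so $m_0+2qk\in\Wb$ for some $k$, contradicting disjointness. Next, $1,\alpha,\beta$ must be linearly dependent over $\QQ$: otherwise Corollary~\ref{cor:unif-ab} produces a pair $(\fp{n\alpha+1/2},\fp{n\beta+1/2})$ in the open box, again a contradiction. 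Consequently the group $\Lambda=\{(b,c)\in\ZZ^2:b\alpha+c\beta\in\ZZ\}$ has rank exactly one (rank $0$ would make $1,\alpha,\beta$ independent; two independent relations would force $\alpha,\beta\in\QQ$); let $(r,s)$ be a primitive generator of $\Lambda$, so $\gcd(r,s)=1$, and set $t=r\alpha+s\beta\in\ZZ$, which is nonzero because $\alpha/\beta\notin\QQ$.

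The core of the proof is a one-dimensional equidistribution argument in a rotated coordinate. With $u_m=\fp{m\alpha+1/2}$ and $v_m=\fp{m\beta+1/2}$ we have $ru_m+sv_m\equiv r(m\alpha+1/2)+s(m\beta+1/2)=mt+(r+s)/2\equiv (r+s)/2\pmod 1$, so every orbit point $(u_m,v_m)$ lies on the affine subtorus $L=\{(x,y)\in\RR^2/\ZZ^2:rx+sy\equiv (r+s)/2\pmod 1\}$. Choosing $r',s'$ with $rs'-sr'=1$, the complementary coordinate $r'u_m+s'v_m\equiv m(r'\alpha+s'\beta)+(r'+s')/2\pmod 1$ is uniformly distributed by Lemma~\ref{lem:weyl-general}, since $r'\alpha+s'\beta$ is irrational (otherwise inverting the unimodular system $(r\alpha+s\beta,\,r'\alpha+s'\beta)$ would make $\alpha,\beta\in\QQ$). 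Hence the orbit is dense in $L$, and disjointness is equivalent to $L$ missing the open box, i.e.\ to the open interval $V=\{rx+sy:0<x<\alpha,\ 0<y<\beta\}$ containing no point of $\tfrac{r+s}{2}+\ZZ$.

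Finally I would run a sign/parity case analysis on $(r,s)$. If $r$ and $s$ have opposite signs, $V$ is an open interval straddling $0$: if $r+s$ is even then $0\in V\cap(\tfrac{r+s}{2}+\ZZ)$, impossible, and if $r+s$ is odd then keeping $\pm\tfrac12$ out of $V$ forces $|r|\alpha<\tfrac12$ and $|s|\beta<\tfrac12$, hence $|t|<1$ and $t=0$, also impossible. So $r$ and $s$ have the same sign, and after replacing $(r,s,t)$ by its negative if needed we may take $r,s,t$ positive, so that $V=(0,t)$. If $r+s$ were odd then $\tfrac12\in V\cap(\tfrac{r+s}{2}+\ZZ)$, impossible; hence $r\equiv s\pmod 2$, the forbidden set is $\ZZ$, and $V=(0,t)$ avoiding $\ZZ$ forces $t=1$. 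Therefore $r\alpha+s\beta=1$ with $r,s$ positive integers and $r\equiv s\pmod 2$, which is case (ii). The main obstacle is the density step — verifying that the complementary coordinate is genuinely irrational so that Weyl's theorem applies — together with the bookkeeping of open versus closed boundaries; both are handled by the irrationality facts behind \eqref{eq:uvequality-impossible}.
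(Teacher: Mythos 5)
The paper itself does not prove this lemma at all (it simply cites Fact 3 of Graham's paper), so your self-contained argument is by necessity a different route, and most of it is sound: the reduction via the membership criterion of Lemma~\ref{lem:general-lemmas}(i), the elimination of the case of rational $\alpha$ or $\beta$, the observation that $1,\alpha,\beta$ must be $\QQ$-dependent by Corollary~\ref{cor:unif-ab}, and the final interval/parity analysis of $V$ against $\tfrac{r+s}{2}+\ZZ$ are all correct. However, there is one genuine gap: the assertion that the generator $(r,s)$ of $\Lambda=\{(b,c)\in\ZZ^2: b\alpha+c\beta\in\ZZ\}$ satisfies $\gcd(r,s)=1$. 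A rank-one subgroup of $\ZZ^2$ need not be generated by a primitive vector of $\ZZ^2$, and this really happens under the hypotheses of the lemma: take $\beta=1/2-\alpha$ with $\alpha\in(0,1/2)$ irrational. Then $b\alpha+c\beta=(b-c)\alpha+c/2\in\ZZ$ forces $b=c$ and $c$ even, so $\Lambda=\ZZ\cdot(2,2)$, and one checks directly that $\Wa$ and $\Wb$ are disjoint here (for even $m$ one has $v_m=1-u_m$, so $u_m<\alpha$ and $v_m<\beta$ would give $1<\alpha+\beta$; for odd $m$, $v_m=1-\fp{m\alpha}<\beta$ forces $\fp{m\alpha}>1/2+\alpha$, whence $u_m=\fp{m\alpha}-1/2>\alpha$). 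So this case lies squarely within the scope of the lemma, yet with $(r,s)=(2,2)$ there are no integers $r',s'$ with $rs'-sr'=1$, and your equidistribution step---the unimodular change of coordinates that gives density of the orbit in $L$---cannot even be set up. Note also that when $g=\gcd(r,s)>1$ the set $L=\{(x,y):rx+sy\equiv(r+s)/2 \pmod 1\}$ is a union of $g$ circles, not a single one.

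The gap is repairable, but it needs an extra idea rather than just bookkeeping: writing $(r,s)=g(r_0,s_0)$ with $\gcd(r_0,s_0)=1$, one has $r_0\alpha+s_0\beta=t/g$ with $\gcd(t,g)=1$ (else a smaller multiple of $(r_0,s_0)$ would lie in $\Lambda$), and applying Lemma~\ref{lem:weyl-general} along each residue class of $m$ modulo $g$, with a unimodular completion of $(r_0,s_0)$, shows the orbit is dense in each of the $g$ circles, hence in all of $L$; equivalently one can invoke Kronecker/duality for the closure of $\{(m\alpha+1/2,m\beta+1/2)\bmod 1\}$. Once density in $L$ is restored, your sign and parity endgame goes through verbatim, since it never uses coprimality of $r$ and $s$, and it correctly yields $r\alpha+s\beta=1$ with $r\equiv s\pmod 2$ (indeed in the example above it returns $r=s=2$). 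As written, though, the proof fails for a nonempty family of admissible pairs $(\alpha,\beta)$, so the density step must be reworked along these lines.
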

\begin{proof}
This is Fact 3 in \cite{Graham1973}, in a slightly different notation.
\end{proof}

\begin{thm}
\label{thm:optimality:TwoExactOneAlmost}

Suppose $\alpha,\beta$ satisfy
\begin{equation}
\label{cond:independent}
\text{$1,\alpha,\beta$ are linearly independent over $\QQ$}.
\end{equation}
Then there does not exist a  partition of $\NN$ into three or more sequences involving the two exact Webster sequences $\Wa$ and $\Wb$. 

\end{thm}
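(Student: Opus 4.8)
The strategy is to derive a contradiction from the assumption that both $\Wa$ and $\Wb$ occur as exact Webster sequences in a partition of $\NN$ into three or more sequences. If $\Wa$ and $\Wb$ are both part of such a partition, then in particular they are disjoint subsets of $\NN$. I would therefore apply Lemma \ref{lem:disjointness} (Graham's criterion): disjointness of $\Wa$ and $\Wb$ forces either (i) $\alpha/\beta \in \QQ$, or (ii) the existence of positive integers $r,s$ with $r\alpha + s\beta = 1$ and $r \equiv s \pmod 2$.

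\medskip
The next step is to rule out both alternatives using the hypothesis \eqref{cond:independent} that $1, \alpha, \beta$ are linearly independent over $\QQ$. Alternative (i) is immediate: if $\alpha/\beta = p/q$ with $p,q \in \ZZ$, then $q\alpha - p\beta = 0$ is a nontrivial rational relation among $\alpha, \beta$ (hence among $1, \alpha, \beta$), contradicting \eqref{cond:independent}. Alternative (ii) is likewise immediate: $r\alpha + s\beta = 1$ with $r,s$ positive integers is a nontrivial rational relation among $1, \alpha, \beta$, again contradicting \eqref{cond:independent}. Since both alternatives of Lemma \ref{lem:disjointness} are excluded, $\Wa$ and $\Wb$ cannot be disjoint, so they cannot both appear in a partition of $\NN$.

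\medskip
I do not anticipate a serious obstacle here; the argument is essentially a one-line deduction from Graham's lemma together with the linear independence hypothesis. The only point requiring a little care is the logical setup: one must note that membership of $\Wa$ and $\Wb$ in \emph{any} partition of $\NN$ (into three or more parts, or indeed any number of parts) entails in particular that these two sequences are pairwise disjoint, which is all that Lemma \ref{lem:disjointness} needs as input. The statement "three or more" is not actually used beyond this; a partition into exactly two parts would force $\alpha + \beta = 1$ (which Theorem \ref{thm:webster-k=2} shows \emph{is} consistent with disjointness via case (ii) with $r = s = 1$), so the interesting content is precisely that once a third sequence is present, $\alpha + \beta < 1$ and no such relation can hold.
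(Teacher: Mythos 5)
Your proposal is correct and follows essentially the same route as the paper: assume both exact Webster sequences appear in the partition, deduce disjointness, invoke Graham's criterion (Lemma \ref{lem:disjointness}), and rule out both alternatives using the linear independence of $1,\alpha,\beta$ over $\QQ$. Your closing remark about the role of the ``three or more'' hypothesis is a reasonable aside and does not affect the argument.
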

\begin{proof}
	We argue by contradiction. Suppose $\alpha, \beta$ satisfy \eqref{cond:independent}
 and $\Wa, \Wb$ are two exact Webster sequences in a three-part partition of $\NN$. Then $\Wa, \Wb$ must be disjoint. Moreover, \eqref{cond:independent} implies that $\alpha/\beta \notin \QQ$ and that there do not exist $r,s\in\NN$ with $r\alpha + s\beta = 1$. But this contradicts Lemma \ref{lem:disjointness}.
\end{proof}

Next, we show that the error bounds $-1 \leq \bt(n) - b(n) \leq 1$ in Theorem \ref{thm:main} are best-possible.

\begin{thm}[Optimality of error bounds]
\label{thm:optimality:one-sided-impossible}

Suppose $\alpha,\beta,\gamma $ satisfy \eqref{eq:abc-condition}, \eqref{cond:independent} and in addition,
\begin{equation}
\label{eq:one-sided-impossible-conditions}
 \alpha > 1/3 \text{ and } \gamma < 1/2.\\
\end{equation}
 Then there does not exist a partition of $\NN$ into sequences $\Wa, \Wbt, \Wct$ such that
 \begin{equation}
 \label{optimaliy-errorbound-c}
     \text{$\ct(n)-c(n)\in \{-1,0,1\}$ for all $n\in\NN$}
 \end{equation}
 and either
  \begin{equation}
      \label{optimaliy-errorbound-b-M1}
      \text{ $\bt(n)-b(n) \in \{-1,0\}$ for all $n\in\NN$}
 \end{equation}
 or
  \begin{equation}
    \label{optimaliy-errorbound-b-P1}
    \text{ $\bt(n)-b(n) \in \{0,1\}$ for all $n\in\NN$.}
 \end{equation}
\end{thm}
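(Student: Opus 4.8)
The plan is to argue by contradiction and transfer the hypotheses from the level of the individual sequence entries to the level of counting functions, where they become mutually incompatible on a set of indices of positive density. Suppose such a partition $\Wa,\Wbt,\Wct$ exists. Then \eqref{optimaliy-errorbound-c} holds, and in addition either \eqref{optimaliy-errorbound-b-M1} or \eqref{optimaliy-errorbound-b-P1} holds. Write $\Ebt(m)=\Wbt(m)-\Wb(m)$ and $\Ect(m)=\Wct(m)-\Wc(m)$, and keep the notation $u_m,v_m,w_m$ from \eqref{def:uv} and \eqref{def:w}.

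I would begin with three reductions. \textit{(a)} If \eqref{optimaliy-errorbound-b-P1} holds, then $\bt(n)\ge b(n)$ for all $n$, hence $\Wbt(m)\le\Wb(m)$, i.e. $\Ebt(m)\le 0$ for all $m$; symmetrically, \eqref{optimaliy-errorbound-b-M1} gives $\Ebt(m)\ge 0$ for all $m$. \textit{(b)} Since $\Wa$ is the exact Webster sequence $W_\alpha$ and $\Wa(m)+\Wbt(m)+\Wct(m)=m$, subtracting the identity of Lemma~\ref{lem:three-counting-sum} gives
\[
\Ebt(m)+\Ect(m)=D(m),\qquad D(m):=\delta(u_m,v_m)-\delta(\fp{u_m+v_m-1/2},1/2),
\]
for all $m$; moreover, as computed in the proof of Proposition~\ref{prop:error-counting-c}, $D(m)=1$ exactly when $u_m+v_m\in(3/2,2)$ and $D(m)=-1$ exactly when $u_m+v_m\in(0,1/2)$. \textit{(c)} Since $\gamma<1/2$ by \eqref{eq:one-sided-impossible-conditions}, consecutive elements of $\Wc$ differ by at least $2$, and combining this with the element bound \eqref{optimaliy-errorbound-c} one checks that $m+2\in\Wc$ forces $\Ect(m)\le 0$ while $m-1\in\Wc$ forces $\Ect(m)\ge 0$. (For the first claim, put $m+2=c(n_0)$: then $\ct(n_0)\ge c(n_0)-1>m$ gives $\Wct(m)\le n_0-1$, whereas $c(n_0-1)\le m<c(n_0)$ gives $\Wc(m)=n_0-1$, so $\Ect(m)\le 0$.)

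With these in hand I would derive the contradiction. Consider first the case \eqref{optimaliy-errorbound-b-P1}, so by \textit{(a)} and \textit{(b)} we have $\Ect(m)\ge D(m)$ for all $m$. By \eqref{cond:independent} and Corollary~\ref{cor:unif-ab} the pairs $(u_m,v_m)$ are equidistributed in $(0,1)^2$, and since $\gamma<1/2$ the set $\{(u,v)\in(0,1)^2: u+v>3/2+\gamma\}$ has positive area, so there are infinitely many $m$ with $u_m+v_m\in(3/2+\gamma,2)$. For any such $m$ we have $D(m)=1$, hence $\Ect(m)\ge 1$. On the other hand $m+2\in\Wc$: by \eqref{eq:w-further}, $u_m+v_m\in(3/2,2)$ gives $w_m=5/2-(u_m+v_m)\in(1/2,1)$, and since $\gamma\in(1/3,1/2)$ (because $1/3<\alpha<\gamma<1/2$) one gets $w_m+2\gamma\in(1,2)$, so $w_{m+2}=\fp{w_m+2\gamma}=3/2+2\gamma-(u_m+v_m)<\gamma$, which by the membership criterion of Lemma~\ref{lem:general-lemmas}(i) means $m+2\in\Wc$; reduction \textit{(c)} then forces $\Ect(m)\le 0$, contradicting $\Ect(m)\ge 1$. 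The case \eqref{optimaliy-errorbound-b-M1} is the mirror image: there $\Ect(m)\le D(m)$ for all $m$, and for the (infinitely many) $m$ with $u_m+v_m\in(0,1/2-\gamma)$ one has $D(m)=-1$, so $\Ect(m)\le -1$, while $w_m=1/2-(u_m+v_m)>\gamma$ and $w_{m-1}=\fp{w_m-\gamma}=1/2-\gamma-(u_m+v_m)<\gamma$ (using $\gamma>1/4$), so $m-1\in\Wc$ and reduction \textit{(c)} forces $\Ect(m)\ge 0$ — again a contradiction.

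The main obstacle is the step just described: one has to identify the correct regions in $(u_m,v_m)$-space — namely the portions of $\{D(m)=1\}$ and $\{D(m)=-1\}$ on which the prescribed neighbour ($m+2$, respectively $m-1$) also lies in $\Wc$ — and then verify both that these regions are non-empty and that on them the value of $\Ect(m)$ forced by the one-sidedness of $\Ebt$ clashes with the two-sided element bound on $\Wct$. The hypotheses enter exactly here: $\gamma<1/2$ makes the regions non-empty and makes the gaps of $\Wc$ equal to $2$ or $3$, while $\alpha>1/3$ (hence $\gamma>1/3$) is what makes $m+2$ (and $m-1$) the relevant neighbour in the fractional-part computation based on \eqref{eq:w-further}; once the regions are found, the computation itself is routine.
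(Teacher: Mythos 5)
Your proposal is correct, and I checked the key steps: the transfer from the one-sided element bound to a one-sided counting bound ($\bt(n)\ge b(n)$ for all $n$ does give $\Ebt(m)\le 0$ for all $m$, and symmetrically); the identity $\Ebt(m)+\Ect(m)=D(m)$, which follows from the partition property together with Lemma \ref{lem:three-counting-sum}, with $D(m)=\pm1$ exactly on $u_m+v_m\in(3/2,2)$ and $(0,1/2)$; the gap argument in your reduction \emph{(c)} (since $\gamma<1/2$ the gaps of $\Wc$ are at least $2$, so $m+2=c(n_0)$ gives $\Wc(m)=n_0-1$ while $\ct(n_0)\ge m+1$ gives $\Wct(m)\le n_0-1$, hence $\Ect(m)\le 0$, and the mirror statement for $m-1\in\Wc$); and the fractional-part computations showing $m+2\in\Wc$ on $\{u_m+v_m>3/2+\gamma\}$ and $m-1\in\Wc$ on $\{u_m+v_m<1/2-\gamma\}$, both regions of positive area reached by Corollary \ref{cor:unif-ab}. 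However, your route is genuinely different from the paper's. The paper argues locally on sequence elements: using $1/3<\alpha<1/2$ (so $\Wa$ admits gaps of length $2$) and equidistribution, it produces an index $m$ with $m\in\Wa\cap\Wb$, $m+1\in\Wc$, $m+2\in\Wa$; since $m$ and $m+2$ are occupied by the exact sequence $\Wa$, the one-sided bound forces $\bt(n)=m+1$ and the two-sided bound forces $\ct(k)=m+1$, a direct collision contradicting disjointness. Your argument instead lives at the level of counting functions and derives incompatible inequalities $\Ect(m)\ge 1$ versus $\Ect(m)\le 0$ (and the mirror pair). What each buys: the paper's collision argument is very concrete and needs no counting identities, but it hinges on the sandwich pattern in $\Wa$, which is exactly where $\alpha>1/3$ enters, and it only sketches the second one-sided case as ``similar''; your version treats the two cases as exact mirror images, and in fact only uses $\gamma<1/2$ (for the nonempty regions and the gaps of $\Wc$) together with $\gamma>1/4$ from \eqref{eq:ab-less-half} — your invocation of $\gamma>1/3$ is stronger than needed — so it effectively proves the conclusion without the hypothesis $\alpha>1/3$, a slight strengthening of the stated theorem.
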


\begin{proof} We argue by contradiction. We only consider the case \eqref{optimaliy-errorbound-b-P1}, as the analysis for the case \eqref{optimaliy-errorbound-b-M1} is similar. Suppose $\Wa, \Wbt, \Wct$ is a partition of $\NN$ such that \eqref{optimaliy-errorbound-c} and \eqref{optimaliy-errorbound-b-P1} hold. We will show that there exists an $m$ such that 
\begin{equation}
\label{eqproof:1/3<a<1/2Condition}
    m\in \Wa \cap \Wb, m+1 \in \Wc, m+2 \in\Wa.
\end{equation}
Assuming \eqref{eqproof:1/3<a<1/2Condition}, we have $m = b(n),  m+1= c(k)$ for some $n$ and $k$. Since $m ,m+2 \in \Wa$, and $\Wa, \Wbt, \Wct$ partition $\NN$, it follows that $m ,m+2 \notin \Wbt$ and $m ,m+2 \notin \Wct$. 
By our assumptions \eqref{optimaliy-errorbound-c} and \eqref{optimaliy-errorbound-b-P1}, we then have $\bt(n) \in \{m ,m+1\}$ and $\ct(k) \in \{m ,m+1, m+2\}$. Since $m, m+2 \in \Wa$, this implies $\bt(n) = m+1 = \ct(k)$, contradicting the partition property. Therefore, it suffices to prove that there exists an $m$ satisfying \eqref{eqproof:1/3<a<1/2Condition}.

Lemma \ref{lem:general-lemmas}(i) gives that
\begin{equation*}
    m \in \Wb \Longleftrightarrow v_m < \beta.
\end{equation*}
We have $ m+1 \in\Wc \Longleftrightarrow w_m > 1-\gamma$. From \eqref{eq:w-further} we get
\begin{align*}
    m+1 \in\Wc &\Longleftrightarrow  1-\fp{u_m+v_m-1/2} > 1-\gamma\\
     &\Longleftrightarrow
    \text{$1/2 <u_m+v_m< \gamma + 1/2$}\\
     &\qquad\text{ or $3/2 < u_m+v_m < 3/2 + \gamma$.}
\end{align*}
By Lemma \ref{lem:general-lemmas}(i) and (iii),
\[
m, m+2\in\Wa \Longleftrightarrow \fp{1/\alpha}\alpha < u_m < \alpha.
\]
Note here we have used that $\alpha < \gamma < 1/2$, so the gap between any two successive elements in $\Wa$ is at least 2. Therefore \eqref{eqproof:1/3<a<1/2Condition} is equivalent to
\begin{equation}
\label{eqproof:1/3<a<1/2Condition-equiv}
    \begin{cases}
    &\fp{1/\alpha}\alpha < u_m < \alpha,\\
    &1/2 <u_m+v_m< \gamma + 1/2 \text{ or }3/2 < u_m+v_m < 3/2 + \gamma,\\
    &v_m <\beta.
\end{cases}
\end{equation}
By Corollary \ref{cor:unif-ab} and the linear independence of $1, \alpha, \beta$, for general $n\in\NN$, the pairs $(u_m, v_m)$ are uniformly distributed modulo 1. By choosing $u_m > \alpha - \epsilon, v_m > \beta-\epsilon$ for a suitable positive $\epsilon$, one can check that
the regions inside the unit square defined by
the three conditions in \eqref{eqproof:1/3<a<1/2Condition-equiv} have a nonempty intersection. Hence, there exists an $m$ satisfying \eqref{eqproof:1/3<a<1/2Condition}. This completes the proof.
\end{proof}

\section{Concluding Remarks}
\label{sec:concluding-remarks}
    In this section, we discuss some possible generalizations and extensions of our results.


\paragraph{Necessity of the conditions in Theorem \ref{thm:main}.}
 The conclusion of Theorem \ref{thm:main} does not necessarily hold for the sequences generated by Algorithm \ref{alg:1} if the condition $\beta < 1/2$ or $\alpha < \gamma$ in Theorem \ref{thm:main} is not satisfied. For example, a computer search shows that when  $(\alpha, \beta) = (\sqrt{2} / 10, 3\sqrt{3} / 10) = (0.14142\dots, 0.51961\dots)$, the sequences generated by Algorithm \ref{alg:1} satisfy $|\ct(n)-c(n)| = 2$ for $n = 692$. Similarly, when $(\alpha, \beta) = ( 3\sqrt{2} / 10, \sqrt{3} / 10) = (0.42426\dots, 0.17320\dots)$ (which does not satisfy the condition $\alpha < \gamma$), the sequences generated by Algorithm \ref{alg:1} satisfy $|\ct(n)-c(n)| = 2$ for $n = 4787$. However, it may still be possible that, with a different construction, Theorem \ref{thm:main} remains valid under more general conditions on $\alpha,\beta$ and $\gamma$.

\paragraph{Partitions into two exact Webster sequences and one 
almost Webster sequence.}
By Theorem \ref{thm:optimality:TwoExactOneAlmost}, in general it is not possible to partition $\NN$ into two Webster sequences and one almost Webster sequence with given densities $\alpha, \beta, \gamma$. Nevertheless, for certain special irrational triples $(\alpha, \beta, \gamma)$, we can obtain such a partition. In fact, we have the following theorem, which is analogous to Theorem 1 in \cite{beatty-paper} and can be proved using similar methods.

\begin{thm}
\label{thm:TwoEact-rs}
Suppose $\alpha, \beta, \gamma$ satisfy
\eqref{eq:abc-condition} and there exist positive integers $r,s$ such that 
\begin{equation}
\label{eq:constr1-condition}
r\alpha+s\beta=1,\quad r \equiv s \mod 2,\quad r,s\ge 2
\end{equation}
If we define $\Wct = \NN \setminus (\Wa\cup\Wb)$, then $\Wa, \Wb, \Wct$ partition $\NN$. Moreover, 
\[
\ct(n)-c(n)\in\{-1,0,1\} \quad (n\in\NN)
\]
and thus $\Wct$ is an almost Webster sequence.

\end{thm}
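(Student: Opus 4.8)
The plan is to mirror the proofs of Theorems \ref{thm:quota} and \ref{thm:main}: first show that $\Wa$ and $\Wb$ are disjoint (so that $\Wa,\Wb,\Wct$ is genuinely a partition of $\NN$), then read off from the counting functions that the discrepancy $\Ect(N):=\Wct(N)-\Wc(N)$ takes only the values $-1,0,1$, and finally bootstrap this to the pointwise bound $\ct(n)-c(n)\in\{-1,0,1\}$. For the disjointness I would argue by contradiction. If $m\in\Wa\cap\Wb$, then by the membership criterion (Lemma \ref{lem:general-lemmas}(i)) we may write $m\alpha=p+\theta-1/2$ and $m\beta=q+\phi-1/2$ with $p,q\in\ZZ$, $\theta\in(0,\alpha)$, $\phi\in(0,\beta)$. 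Multiplying $r\alpha+s\beta=1$ by $m$ gives
\[
m=r(m\alpha)+s(m\beta)=\left(rp+sq-\tfrac{r+s}{2}\right)+\left(r\theta+s\phi\right),
\]
and since $r\equiv s\pmod 2$ the parenthesized quantity is an integer, forcing $r\theta+s\phi\in\ZZ$; but $0<r\theta+s\phi<r\alpha+s\beta=1$, a contradiction. Hence $\Wa\cap\Wb=\emptyset$, and since $\Wct=\NN\setminus(\Wa\cup\Wb)$ the three sequences partition $\NN$. (This is the converse of the implication recorded in Lemma \ref{lem:disjointness}, and is essentially where the hypothesis $r\equiv s\pmod 2$ in \eqref{eq:constr1-condition} is used.)

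For the second step, since $\Wa,\Wb,\Wct$ partition $\NN$ we have $\Wct(N)=N-\Wa(N)-\Wb(N)$, while Lemma \ref{lem:three-counting-sum} gives $\Wc(N)=N-\Wa(N)-\Wb(N)-\delta(u_N,v_N)+\delta(\fp{u_N+v_N-1/2},1/2)$, with $u_n=\fp{n\alpha+1/2}$, $v_n=\fp{n\beta+1/2}$ as in \eqref{def:uv}. Subtracting yields $\Ect(N)=\delta(u_N,v_N)-\delta(\fp{u_N+v_N-1/2},1/2)$, which automatically lies in $\{-1,0,1\}$ because each $\delta$ is $0$ or $1$. A short check of the three ranges of $u_N+v_N$ in $(0,2)$ then sharpens this to $\Ect(N)=1\iff u_N+v_N>3/2$ and $\Ect(N)=-1\iff u_N+v_N<1/2$, which is what the last step needs.

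For the final step I would reuse the counting-function manipulations of Section \ref{sec:proof-main-thm}: combining monotonicity of $\Wct,\Wc$ with $\Ect\in\{-1,0,1\}$ shows that a jump $\ct(n)\le c(n)-2$ forces $\Ect(N)=1$ at some $N\in\Wa\cup\Wb$, and a jump $\ct(n)\ge c(n)+2$ forces $\Ect(N)=-1$ at some $N\notin\Wc$. Both are impossible. If $N\in\Wa$ then $u_N<\alpha<1/2$, hence $u_N+v_N<3/2$, contradicting $\Ect(N)=1$ (symmetrically if $N\in\Wb$). And if $\Ect(N)=-1$ then $u_N+v_N<1/2$, so $\fp{u_N+v_N-1/2}=u_N+v_N+1/2$ and $w_N=\fp{N\gamma+1/2}=1/2-u_N-v_N$ by \eqref{eq:w-further}; then $N\notin\Wc$, i.e.\ $w_N>\gamma$, rearranges to $u_N+v_N<\alpha+\beta-1/2$, which (using $\alpha,\beta<1/2$, see \eqref{eq:ab-less-half}) gives $u_N<\alpha$ and $v_N<\beta$, i.e.\ $N\in\Wa\cap\Wb$, contradicting the first step. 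Thus $\ct(n)-c(n)\in\{-1,0,1\}$ for all $n$, so $\Wct$ is an almost Webster sequence of density $\gamma$. I expect the disjointness of $\Wa$ and $\Wb$ to be the only genuine obstacle; once it is in hand, the error analysis is actually cleaner than in Section \ref{sec:proof-main-thm}, since here $\Wa$ and $\Wb$ are \emph{exact} Webster sequences, so the unperturbed membership criteria of Lemma \ref{lem:general-lemmas} apply verbatim and the ``jump-by-$2$'' obstructions collapse either to a trivial inequality or to $\Wa\cap\Wb\neq\emptyset$.
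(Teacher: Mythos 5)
Your proposal is correct and follows essentially the route the paper intends for Theorem \ref{thm:TwoEact-rs} (which it leaves unproved, citing the analogous argument for Theorem 1 of \cite{beatty-paper}): your parity computation supplies exactly the converse direction of Lemma \ref{lem:disjointness} needed for disjointness of $\Wa$ and $\Wb$, and the remainder correctly reuses Lemma \ref{lem:three-counting-sum}, the membership criterion, and the Section \ref{sec:proof-main-thm} bootstrap, with the jump-by-two exclusions verified at $N=c(n)-1$ (forcing $\Ect(N)=1$ with $N\in\Wa\cup\Wb$) and $N=c(n)+1$ (forcing $\Ect(N)=-1$ with $N\notin\Wc$). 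It is worth noting that your argument never uses $r,s\ge 2$, only $r\equiv s\pmod 2$, so it in fact establishes the conclusion under a slightly weaker hypothesis than stated.
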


\paragraph{The case of finite sequences and rational densities.}
In real-life applications, we seek to partition a finite sequence $\{1,2,\dots, N\}$ into sequences of length $d_i$ with $\sum_{i=1}^k d_i = N$ as evenly as possible. Then the associated densities $\alpha_i = d_i/N$ are rational, so our results do not directly apply. An approximation argument shows that when some of $\alpha, \beta, \gamma$ are rational but satisfy the other conditions of Theorem \ref{thm:main}, one can obtain partitions into perturbed Webster sequences of densities $\alpha, \beta, \gamma$ whose perturbation errors are at most 1 larger than those in Theorem \ref{thm:main}, i.e. at most 2. One can ask if one can obtain perturbation errors at most 1 in the case of rational densities. In fact, a computer search showed that when $(\alpha, \beta, \gamma)  = (7/20, 3/20, 1/2)$, the sequences constructed\footnote{When $\alpha$ and $\beta$ are rational, it is possible that $u_m = \alpha$ or $v_m = \beta$, so Algorithm \ref{alg:1} needs to be extended to cover these cases. We checked all possible extensions in our computer search, and in all cases, errors $\pm2$ occurred for some $n < 50$ for the example $(\alpha, \beta, \gamma)  = (7/20, 3/20, 1/2)$.} by Algorithm \ref{alg:1} satisfy $|\ct(n)-c(n)| \ge 2$ for some $n < 50$, so the conclusion \eqref{eq:mainThm-bounds-c} of Theorem \ref{thm:main} does not hold, at least for the sequences constructed by Algorithm \ref{alg:1}.

\paragraph{Partitions into $k$ sequences.} A natural extension of our results would be partitions of $\NN$ into more than 3 sequences. However, by Theorem \ref{thm:optimality:TwoExactOneAlmost}, in general at most one of them can be a Webster sequence. In fact, as pointed out by the referee, in general, perturbations of size at least $\cl{k/2}$ are necessary to obtain a partition of $\NN$ into $k$ sequences. This is obvious in the case when all densities $\alpha_i$ are equal, i.e., $\alpha_i = 1/k$ for $i=1, \dots, k$, since in this case, the Webster sequence $W_{\alpha_i} = W_{1/k} =  \{\cl{k/2}, \cl{3k/2}, \cl{5k/2}, \dots \}$ has gaps of size $k$, so in order to get a partition, perturbations of $\cl{k/2}$ are necessary.

\section{Acknowledgments}
I am grateful to Professor A.J. Hildebrand for providing motivation, supervision, and suggestions on this research project in the past two years. I would also like to express my gratitude to the opportunity provided by the \emph{Illinois Geometry Lab} at the University of Illinois, where this research originated in Spring 2018. I thank the referee for many helpful comments and suggestions that helped improve the exposition and shorten some of the arguments.
\bibliographystyle{plain}


\Addresses

\end{document}